\newtheorem{theorem}{Theorem}[section]
\newtheorem{lem}[theorem]{Lemma}
\theoremstyle{Corollary}
\newtheorem{cor}[theorem]{Corollary}
\numberwithin{equation}{section}
\begin{document}

\title[Invariant surface area functionals]{On the invariant surface area functionals in 3-dimensional CR geometry}

\author{Pak Tung Ho}
\address{Department of Mathematics, Tamkang University Tamsui, New Taipei City 251301, Taiwan}

\email{paktungho@yahoo.com.hk}

\subjclass[2000]{Primary  32V05; Secondary 32V20}

\date{11th of November, 2025.}

\begin{abstract}
Cheng, Yang, and Zhang have studied 
two  invariant surface area functionals 
in 3-dimensional CR manifolds. 
They 
deduced the Euler–Lagrange equations of the associated energy functionals when the 3-dimensional CR manifold
has constant Webster curvature and vanishing torsion. 
In this paper, 
we deduce the 
Euler–Lagrange equations of the energy functionals
in a more general 
3-dimensional CR manifold. 
Moreover, we study 
the invariant area functionals
on the disk bundle, 
on the Rossi sphere, 
and on  $3$-dimensional tori. 
In particular, we show that the Clifford torus 
is a minimizer for $E_1$ on the Rossi sphere $S^3_t$ when $t=-4+\sqrt{15}$. 
Also, by computing the second variation formula, 
we show that the Clifford torus is not a minimizer 
 for $E_1$ on the Rossi sphere $S^3_t$ when  $t>-4+\sqrt{15}$.

\end{abstract}

\maketitle

\section{Introduction}
 
In \cite{C}, Cheng introduced 
two CR invariant surface area functional 
$dA_1$ and $dA_2$. 
To recall them, let us review some basic notions 
in a pseudohermitian $3$-manifold.
These notions could be found in \cite{Lee}
and \cite{T}
for example. 
For a pseudohermitian $3$-manifold $(M,J,\theta)$, 
$\theta$ is the contact form
and $\xi=\ker\theta$ is the contact bundle. 
Then $J:\xi\to\xi$ is an endomorphism 
such that $J^2=-id$. 
On $(M,J,\theta)$, 
there is a canonical connection $\nabla$, which is called 
Tanaka-Webster connection. 
Associated to this connection, we have 
torsion $A_{11}$ and (Tanaka-)Webster curvature $W$. 

Let $\Sigma$ be a surface in $(M,J,\theta)$. 
We recall (c.f. \cite{CHMY}) that a point $p\in \Sigma$
is \textit{singular} 
if its tangent plane $T_p\Sigma$ coincides 
with the contact plane $\xi_p$ at $p$. 
We call a surface nonsingular if 
it does not contain any singular point. 
We recall in \cite{CHMY}
that a moving frame associated to $\Sigma$ can be chosen. 
Associated to $\Sigma$, 
we have a special frame $e_1$, $e_2:=Je_1$ such that 
$e_1\in T\Sigma\cap \ker\theta$
and has unit length 
with respect to the Levi metric $\frac{1}{2}d\theta(\cdot,J\cdot)$.
We denote the coframe dual to $e_1, e_2$ and $T$ 
by $e^1,e^2$ and $\theta$. 
A \textit{deviation function} $\alpha$ on $\Sigma$ is defined as 
$$T+\alpha e_2\in T\Sigma.$$
The \textit{$p$-mean curvature} or \textit{horizontal mean curvature}
$H$ of $\Sigma$ is defined as 
$$\nabla_{e_1}e_1=H e_2.$$

The two invariant surface area functionals
are (c.f. \cite{CYZ})
\begin{equation}\label{0.5}
E_1(\Sigma)=\int_\Sigma dA_1~~\mbox{ and }~~E_2(\Sigma)=\int_\Sigma dA_2,
\end{equation}
where 
\begin{equation}\label{0.4}
dA_1=\Big|e_1(\alpha)+\frac{1}{2}\alpha^2-\mbox{Im} A_{11}+\frac{1}{4}W+\frac{1}{6}H^2\Big|^{\frac{3}{2}}\theta\wedge e^1
\end{equation}
and 
\begin{equation}\label{0.3}
\begin{split}
dA_2 &=\Bigg[(T+\alpha e_2)(\alpha)+\frac{2}{3}\left(e_1(\alpha)+\frac{1}{2}\alpha^2-\mbox{Im} A_{11}+\frac{1}{4}W\right)H+\frac{2}{27}H^3\\
&\hspace{8mm}+\mbox{Im}\left(\frac{1}{6}W^{,1}+\frac{2i}{3}(A^{11})_{,1}\right)-\alpha\left(\mbox{Re} A^1_{\overline{1}}\right)\Bigg]\theta\wedge e^1.
\end{split}
\end{equation}
These were first found by Cheng in \cite{C}. 
Several remarks are in order. 
First note that
the integrals in (\ref{0.5})
 can only be taken over nonsingular 
region of $\Sigma$. 
So we assume
that the singular set of $\Sigma$ 
has measure zero. 
For smooth surfaces, 
the singular set is always of measure zero.
This fact was proved by 
Derridj and later by Balogh \cite{Balogh}
(see also \cite[Theorem D]{CHY}). 
Note also that 
the area functionals in (\ref{0.5})
are CR invariant, i.e. invariant under the contact form change. 
To check this, it suffices to check that the $2$-forms
in (\ref{0.4}) and (\ref{0.3}) are CR invariant. 
We have the following transformation laws under the change of contact form 
$\tilde{\theta}=\lambda^2\theta$: (see P.414-415 of \cite{CYZ})
\begin{align*}
\tilde{e}_1&=\lambda^{-1}e_1,~~\tilde{e}_2=\lambda^{-1}e_2,\\
\tilde{T}&=\lambda^{-2}T+\lambda^3\big(e_2(\lambda)e_1-e_1(\lambda) e_2\big),\\
\tilde{\alpha}&=\lambda^{-1}\alpha+\lambda^{-2}e_1(\lambda),\\
\tilde{H}&=\lambda^{-1}H-3\lambda^{-2}e_2(\lambda),\\
\mbox{Im}\widetilde{A}_{11}&=\lambda^{-2}\mbox{Im}A_{11}+\frac{1}{2}\big[\lambda^{-4}e_2(\lambda)^2-\lambda^{-4}e_1(\lambda)^2
+\lambda^{-1}(\lambda^{-1})_{22}-\lambda^{-1}(\lambda^{-1})_{11}\big],\\
\widetilde{W}&=2\lambda^{-1}\big[ (\lambda^{-1})_{11}+(\lambda^{-1})_{22}\big]
-4\big[\lambda^{-4}e_1(\lambda)^2+\lambda^{-4}e_2(\lambda)^2\big]+\lambda^{-2}W.
\end{align*}
Then the invariance of $dA_1$ and $dA_2$ can be verified directly by using 
these transformation laws.

When $(M,J,\theta)$ has constant Webster curvature and vanishing torsion, 
the Euler-Lagrange equation for $E_1$ reads (c.f. \cite[Theorem 3]{CYZ})
$$\mathcal{E}_1=0,$$
where 
\begin{equation}\label{EE_for_E1}
\begin{split}
\mathcal{E}_1&=\frac{1}{2}e_1(|H_{cr}|^{1/2}\mathfrak{f})+\frac{3}{2}|H_{cr}|^{1/2}\alpha\mathfrak{f}\\
&\hspace{4mm}+\frac{1}{2}sign(H_{cr})|H_{cr}|^{1/2}\left(9h_{00}+6h_{11}h_{10}+\frac{2}{3}h_{11}^3\right),
\end{split}
\end{equation}
where
\begin{equation}\label{0.1}
\begin{split}
H_{cr}&=e_1(\alpha)+\frac{1}{2}\alpha^2+\frac{1}{4}W+\frac{1}{6}H^2,\\
|H_{cr}|\mathfrak{f}&:=e_1(H)\left(e_1(\alpha)+\frac{1}{2}\alpha^2+\frac{1}{3}H^2+\frac{1}{4}W\right)\\
&\hspace{4mm}+H(T+\alpha e_2)(H)+\frac{3}{2}(T+\alpha e_2)\left(e_1(\alpha)+\frac{1}{2}\alpha^2\right)\\
&\hspace{4mm}-\frac{7}{2}\alpha H e_1(\alpha)-\frac{5}{2}\alpha^3 H-\frac{2}{3}\alpha H^3-\frac{5}{4}\alpha H W
\end{split}
\end{equation}
and 
\begin{equation}\label{0.2}
\begin{split}
&9h_{00}+6h_{11}h_{10}+\frac{2}{3}h_{11}^3\\
&=9(T+\alpha e_2)(\alpha)+6H\left(e_1(\alpha)+\frac{1}{2}\alpha^2+\frac{1}{4}W\right)
+\frac{2}{3}H^3. 
\end{split}
\end{equation}
Furthermore, when $(M,J,\theta)$ has constant Webster curvature and vanishing torsion, 
the Euler-Lagrange equation for $E_2$ reads (c.f. \cite[Theorem 8 and Theorem 9]{CYZ})
$$\mathcal{E}_2=0,$$
where
\begin{equation}\label{EE_for_E2}
\begin{split}
\frac{9}{4}\mathcal{E}_2&=H e_1e_1(H)+3 e_1 V(H)+e_1(H)^2+\frac{1}{3}H^4\\
&\hspace{4mm}+3 e_1(\alpha)^2+12\alpha^2 e_1(\alpha)+12\alpha^4\\
&\hspace{4mm}-\alpha H e_1(H)+2 H^2 e_1(\alpha)+5\alpha^2 H^2\\
&\hspace{4mm}+\frac{3}{2} W\left(e_1(\alpha)+\frac{2}{3}H^2+5\alpha^2+\frac{1}{2}W\right),
\end{split}
\end{equation}
where $V:=T+\alpha e_2$. 
Among other things, 
Cheng, Yang, and Zhang 
\cite{CYZ} found
many examples of critical points of 
the invariant area functionals $E_1$ and $E_2$  
when $(M, J,\theta)$ is the Heisenberg group or 
the CR sphere. 
They also showed that they  are related to the CR version of the singular Yamabe problem. 
In particular, it is shown that the invariant area surface functional $E_2$ determines the log term in the volume renormalization
for a formal solution to the singular Yamabe problem (c.f. \cite[Theorem 1]{CYZ}), thus providing an obstruction to the existence of smooth solutions of the singular
CR Yamabe problem. See also \cite{K} for results about 
solving the singular CR Yamabe problem on domains with nonsingular boundary.

In this paper, we continue 
the study of invariant surface area functionals $E_1$ and $E_2$ defined in (\ref{0.5}). 
First we study these area functionals 
of nonsingular surface $\Sigma$ in the disk bundle $B^1\times \mathbb{R}$, 
where $B^1$ is the unit disk in $\mathbb{C}$, i.e. 
$B^1=\{z\in \mathbb{C}: |z|^2<1\}$. 
In Section \ref{section2}, we recall some computations in the disk bundle 
$B^1\times\mathbb{R}$. 
In particular, we will see in Section \ref{section2} that 
the Webster scalar curvature is constant and 
the torsion vanishes for the disk bundle $B^1\times\mathbb{R}$. Hence, the  Euler-Lagrange equations
for $E_1$ and $E_2$ 
deduced by Cheng et al., 
which are stated in (\ref{EE_for_E1})-(\ref{EE_for_E2}), are still applicable to 
the surfaces in $B^1\times\mathbb{R}$. 
In Section \ref{section3} and Section \ref{section4}, we are able to find several examples 
of critical points of the area functionals $E_1$ and $E_2$ in $B^1\times\mathbb{R}$.
See Lemmas \ref{lem3.1}-\ref{lem3.4} and Lemmas \ref{lem4.1}-\ref{lem4.2}. 

Next we study the area functionals $E_1$ and $E_2$
when $(M,J,\theta)$ may not have constant Webster curvature or vanishing torsion. 
To this end, 
we deduce in Section \ref{section5}
the Euler-Lagrange equation for the
invariant surface area functional $E_1$ in the general case, 
for which $(M,J,\theta)$ does not necessarily have
constant Webster curvature or vanishing torsion. 
See Theorem \ref{5.1} for the precise statement. 
In Section \ref{section6}, 
we deduce 
the Euler-Lagrange equation for the
invariant surface area functional $E_2$ 
when $(M,J,\theta)$ has constant Webster curvature 
and  constant, purely imaginary torsion. 
See Theorem \ref{thm7.1} for the precise statement. 

In Section \ref{section8}, 
we study the invariant area functionals 
on the Rossi sphere. 
We will see in Section \ref{section8} 
that, on the Rossi sphere, the Webster curvature is constant and the torsion is constant and purely imaginary. 
In particular, Theorem \ref{thm7.1} is applicable to the Rossi sphere. 
Among other things, we prove that 
the Clifford torus is a minimizer for   $E_1$
with zero energy in the Rossi sphere $S^3_{t_0}$, where 
$t_0=-4+\sqrt{15}$. See Lemma \ref{lem5.1}.
We remark that this is a sharp contrast 
to the situation in the CR sphere. 
Indeed, it was proved in \cite{CYZ}
that if $T^2$ is a torus in the CR sphere 
without any singular points, then it cannot have $E_1=0$. 
We also prove that the 
Clifford torus 
is a critical point for $E_1$
in the Rossi sphere $S^3_{t}$ for any $t$. 
See Lemma \ref{lem5.2}.
On the other hand, we prove that 
the Clifford torus 
is a critical point for $E_2$
in the Rossi sphere $S^3_{t}$ 
if and only if $t=-4+\sqrt{15}$.

In Section \ref{section10}, 
we compute the second variation of the Clifford torus 
on the Rossi sphere. See Theorem \ref{thm8.1}.
In particular, we show that the Clifford torus is not a
minimizer 
of $E_1$ on the Rossi sphere $S^3_t$, when   $t>-4+\sqrt{15}$. 
This is inspired by the very recent result 
of Cheng, Chiu, Yang, and Zhang in \cite{CCYZ}, 
in which they computed the second variation of the Clifford torus 
on the standard CR sphere. 
In particular, they showed the Clifford torus is not a minimizer 
of $E_1$ on the standard CR sphere.

In Section \ref{section9}, we study the invariant area functionals
when $(M,J,\theta)$ is a $3$-dimensional torus. 
We note that 
for different generating curve, it defines different $3$-dimensional tori. 
We also note in Section \ref{section9} that  the torsion of such torus never vanishes. 
In particular, the Euler-Lagrangian equations 
(\ref{EE_for_E1})-(\ref{EE_for_E2}) are not applicable. 
This shows the necessity of proving 
Theorem \ref{thm5.1} in Section \ref{section5} and Theorem \ref{thm7.1}
in Section \ref{section6}. 
We are able to find critical points 
for $E_1$ and $E_2$ on some $3$-dimensional tori. 
See Lemmas \ref{lem6.1} and \ref{lem6.2}.

\section{Computations in $B^1\times\mathbb{R}$}\label{section2}

We include some computations in $B^1\times\mathbb{R}$ for the readers' convenience. 
All these could actually be found in \cite{HH}, in which 
the differential geometry of curves in the disk bundle $B^1\times\mathbb{R}$ has been studied. 

On the disk bundle $B^1\times \mathbb{R}$, 
where $B^1$ is the unit disk in $\mathbb{C}$, i.e. 
$B^1=\{z\in \mathbb{C}: |z|^2<1\}$, 
the contact form is given by 
\begin{equation}\label{1.1}
\theta=dt+\frac{2i}{1-|z|^2}(zd\overline{z}-\overline{z}dz)
\end{equation}
for $(z,t)\in B^1\times \mathbb{R}$. 
The CR structure of $B^1\times\mathbb{R}$
is given by the decomposition of 
the complexified tangent bundle $\mathbb{C}\otimes T(B^1\times\mathbb{R})
=\mbox{span}_{\mathbb{C}}\{Z_1,Z_{\overline{1}}, T\}$
with $\theta(Z_1)=\theta(Z_{\overline{1}})=0$, 
$\theta(T)=1$, where 
\begin{equation}\label{1.2}
T=\frac{\partial}{\partial t},~~
Z_1=\frac{1-|z|^2}{2}\frac{\partial}{\partial z}+i\overline{z}\frac{\partial}{\partial t},~~
Z_{\overline{1}}=\frac{1-|z|^2}{2}\frac{\partial}{\partial \overline{z}}-iz\frac{\partial}{\partial t}.
\end{equation}
The contact bundle $\xi$ is given by $\xi=\mbox{span}_{\mathbb{C}}\{Z_1,Z_{\overline{1}}\}$. 
The almost complex structure $J:\xi\to\xi$
is defined such that $J^2=-I$
with $J(Z_1)=iZ_1$ and $J(Z_{\overline{1}})=-iZ_{\overline{1}}$.

If we define 
\begin{equation}\label{2.0}
\theta^1=\frac{2}{1-|z|^2}dz~~\mbox{
and }~~\theta^{\overline{1}}=\frac{2}{1-|z|^2}d\overline{z},
\end{equation}
then it follows from (\ref{1.1}) that 
\begin{equation}\label{2.1}
d\theta=\frac{4i}{(1-|z|^2)^2}dz\wedge d\overline{z}=i\theta^1\wedge \theta^{\overline{1}}.
\end{equation}
Since $d\theta=ih_{1\overline{1}}\theta^1\wedge \theta^{\overline{1}}$ (see (2.2) in \cite{Lee}), it follows from (\ref{2.1}) that 
\begin{equation}\label{2.11}
h_{1\overline{1}}=1.
\end{equation}
Note that the frame $\{Z_1,Z_{\overline{1}},T\}$ defined in (\ref{1.2})
is dual to the coframe $\{\theta^1,\theta^{\overline{1}}, \theta\}$. 

The Levi metric is defined as 
\begin{equation}\label{Levi}
g_\theta=\theta\otimes\theta+\frac{1}{2}d\theta(\cdot,J\cdot).
\end{equation}
Then we have 
\begin{equation}\label{2.2}
g_\theta(Z_1,Z_1)=g_\theta(Z_{\overline{1}},Z_{\overline{1}})=0,~~
g_\theta(Z_1,Z_{\overline{1}})=g_\theta(Z_{\overline{1}},Z_1)=\frac{1}{2}.
\end{equation}
To see this, we compute 
\begin{equation*}
\begin{split}
g_\theta(Z_1,Z_{\overline{1}})&=\frac{1}{2}d\theta(Z_1,JZ_{\overline{1}})
=\frac{1}{2}d\theta(Z_1,-iZ_{\overline{1}})=-\frac{i}{2}d\theta(Z_1,Z_{\overline{1}})\\
&=\frac{1}{2}(\theta^1\wedge \theta^{\overline{1}})(Z_1,Z_{\overline{1}})=\frac{1}{2},
\end{split}
\end{equation*}
we have used (\ref{2.1}). The others can be computed similarly. 

Note that $Z_1$ and $Z_{\overline{1}}$ can be written as 
\begin{equation}\label{2.3}
\begin{split}
Z_1&=\frac{1-|z|^2}{2}\cdot\frac{1}{2}\left(\frac{\partial}{\partial x}-i\frac{\partial}{\partial y}\right)
+i(x-iy)\frac{\partial}{\partial t}\\
&=\left(\frac{1-|z|^2}{4}\frac{\partial}{\partial x}+y\frac{\partial}{\partial t}\right)
-i\left(\frac{1-|z|^2}{4}\frac{\partial}{\partial y}-x\frac{\partial}{\partial t}\right)
\end{split}
\end{equation}
and 
\begin{equation}\label{2.4}
Z_{\overline{1}}=\left(\frac{1-|z|^2}{4}\frac{\partial}{\partial x}+y\frac{\partial}{\partial t}\right)
+i\left(\frac{1-|z|^2}{4}\frac{\partial}{\partial y}-x\frac{\partial}{\partial t}\right)
\end{equation}
where $z=x+iy$. 
Therefore, if we write 
\begin{equation}\label{2.5}
\overset{\circ}{e}_1=\frac{1-|z|^2}{2}\frac{\partial}{\partial x}+2y\frac{\partial}{\partial t} 
~~\mbox{ and }~~\overset{\circ}{e}_2= \frac{1-|z|^2}{2}\frac{\partial}{\partial y}-2x\frac{\partial}{\partial t}
\end{equation}
where $z=x+iy$, 
then it follows from (\ref{2.3})-(\ref{2.5}) that 
\begin{equation}\label{2.6}
Z_1=\frac{1}{2}(\overset{\circ}{e}_1-i\overset{\circ}{e}_2)~~\mbox{ and }~~Z_{\overline{1}}=\frac{1}{2}(\overset{\circ}{e}_1+i\overset{\circ}{e}_2),
\end{equation}
and hence, 
\begin{equation}\label{2.8}
J(\overset{\circ}{e}_1)=\overset{\circ}{e}_2~~\mbox{ and }~~J(\overset{\circ}{e}_2)=-\overset{\circ}{e}_1.
\end{equation}
It follows from (\ref{2.6}) that 
\begin{equation}\label{2.7}
\overset{\circ}{e}_1=Z_1+Z_{\overline{1}}~~\mbox{ and }~~
\overset{\circ}{e}_2=i(Z_1-Z_{\overline{1}}).
\end{equation}
From this, one can easily verify that $\{\overset{\circ}{e}_1,\overset{\circ}{e}_2,T\}$ is an orthonormal basis 
with respect to the Levi metric $g_\theta$. 
For example, we find 
\begin{equation*}
\begin{split}
g_\theta(\overset{\circ}{e}_2,\overset{\circ}{e}_2)
&=-g_\theta(Z_1-Z_{\overline{1}},Z_1-Z_{\overline{1}})\\
&=-\big(g_\theta(Z_1,Z_1)-g_\theta(Z_1,Z_{\overline{1}})-g_\theta(Z_{\overline{1}},Z_1)+g_\theta(Z_{\overline{1}},Z_{\overline{1}})\big)
=\frac{1}{2}+\frac{1}{2}=1
\end{split}
\end{equation*}
by using (\ref{2.2}), and the others can be computed similarly.

It follows from (\ref{2.0}) that 
\begin{equation}\label{2.9}
\begin{split}
d\theta^1&=\frac{2z}{(1-|z|^2)^2}d\overline{z}\wedge dz=\theta^1\wedge\left(-\frac{z}{2}\theta^{\overline{1}}\right),\\
d\theta^{\overline{1}}&=\frac{2\overline{z}}{(1-|z|^2)^2}dz\wedge d\overline{z}
=\theta^{\overline{1}}\wedge\left(-\frac{\overline{z}}{2}\theta^1\right).
\end{split}
\end{equation}
The structure equations imply that (see (4.1) and (4.2) in \cite{Lee})
\begin{equation}\label{2.10}
d\theta^1=\theta^1\wedge\omega_1^1+\theta\wedge\tau^1~~
\mbox{ and }~~\omega_1^1+\omega_{\overline{1}}^{\overline{1}}=dh_{1\overline{1}},
\end{equation}
where $\omega_1^1$ is the connection $1$-form and $\tau$ is the torsion form. 
Comparing this with (\ref{2.9}) and using (\ref{2.11}), we obtain 
\begin{equation}\label{2.12}
\omega_1^1=\frac{\overline{z}}{2}\theta^1-\frac{z}{2}\theta^{\overline{1}}~~
\mbox{ and }~~\tau\equiv 0.
\end{equation}
By (\ref{2.12}), we compute
\begin{equation}\label{2.13}
\begin{split}
\nabla_{Z_1}Z_1&=\omega_1^1(Z_1)Z_1=\frac{\overline{z}}{2}Z_1,~~
\nabla_{Z_{\overline{1}}}Z_{\overline{1}}=\overline{(\nabla_{Z_1}Z_1)}=\frac{z}{2}Z_{\overline{1}},\\
\nabla_{Z_{\overline{1}}}Z_1&=\omega_1^1(Z_{\overline{1}})Z_1=-\frac{z}{2}Z_1,~~
\nabla_{Z_1}Z_{\overline{1}}=\overline{(\nabla_{Z_{\overline{1}}}Z_1)}=-\frac{\overline{z}}{2}Z_{\overline{1}}.
\end{split}
\end{equation}
From (\ref{2.7}) and (\ref{2.13}), we find
\begin{equation}\label{2.14}
\begin{split}
\nabla_{\overset{\circ}{e}_1}\overset{\circ}{e}_1
&=\nabla_{Z_1+Z_{\overline{1}}}(Z_1+Z_{\overline{1}})
=\nabla_{Z_1}Z_1+\nabla_{Z_1}Z_{\overline{1}}+\nabla_{Z_{\overline{1}}}Z_1+\nabla_{Z_{\overline{1}}}Z_{\overline{1}}\\
&=\frac{\overline{z}-z}{2}(Z_1-Z_{\overline{1}})=-y\overset{\circ}{e}_2,\\
\nabla_{\overset{\circ}{e}_2}\overset{\circ}{e}_2
&=-\nabla_{Z_1-Z_{\overline{1}}}(Z_1-Z_{\overline{1}})
=-\nabla_{Z_1}Z_1+\nabla_{Z_1}Z_{\overline{1}}+\nabla_{Z_{\overline{1}}}Z_1-\nabla_{Z_{\overline{1}}}Z_{\overline{1}}\\
&=-\frac{z+\overline{z}}{2}(Z_1+Z_{\overline{1}})=-x\overset{\circ}{e}_1,\\
\nabla_{\overset{\circ}{e}_1}\overset{\circ}{e}_2
&=i\nabla_{Z_1+Z_{\overline{1}}}(Z_1-Z_{\overline{1}})
=i\Big(\nabla_{Z_1}Z_1-\nabla_{Z_1}Z_{\overline{1}}+\nabla_{Z_{\overline{1}}}Z_1-\nabla_{Z_{\overline{1}}}Z_{\overline{1}}\Big)\\
&=i\frac{\overline{z}-z}{2}(Z_1+Z_{\overline{1}})=y\overset{\circ}{e}_1,\\
\nabla_{\overset{\circ}{e}_2}\overset{\circ}{e}_1
&=i\nabla_{Z_1-Z_{\overline{1}}}(Z_1+Z_{\overline{1}})
=i\Big(\nabla_{Z_1}Z_1+\nabla_{Z_1}Z_{\overline{1}}-\nabla_{Z_{\overline{1}}}Z_1-\nabla_{Z_{\overline{1}}}Z_{\overline{1}}\Big)\\
&=i\frac{z+\overline{z}}{2}(Z_1-Z_{\overline{1}})=x\overset{\circ}{e}_2.
\end{split}
\end{equation}

From (\ref{2.9}) and (\ref{2.12}), we find
$d\omega_1^1=-\frac{1}{2}\theta^1\wedge\theta^{\overline{1}}.$
Hence, the Webster curvature is given by 
\begin{equation}\label{2.15}
W=-\frac{1}{2}. 
\end{equation}
Note that, on the disk bundle $B^1\times\mathbb{R}$,
it follows from 
(\ref{2.12}) and (\ref{2.15}) that  
the Webster scalar curvature is constant and 
the torsion vanishes. 
In particular, 
the  Euler-Lagrange equations
for $E_1$ and $E_2$ stated in (\ref{EE_for_E1})  
and (\ref{EE_for_E2}) are still applicable to 
the surfaces $\Sigma$ in $B^1\times\mathbb{R}$.

\section{Examples of critical points of $E_1$
in the disk bundle}\label{section3}

In this section, we look for  critical points of 
the area functional $E_1$ in the disk bundle. 

\subsection{Example 1.} 

Consider the surface $\Sigma$ in $B^1\times\mathbb{R}$ 
defined by $ax+by=c$ for some real constants $a$, $b$, and $c$
such that $a^2+b^2\neq 0$. 
Since $\frac{\partial}{\partial t}$ lies in $T\Sigma$, 
we conclude that the deviation function 
\begin{equation}\label{3.0}
\alpha\equiv 0.
\end{equation}
Let $u=ax+by-c$ be a defining function. We compute
the sub-gradient of $u$ as follows 
$$\nabla_b u
=\overset{\circ}{e}_1(u) \overset{\circ}{e}_1+\overset{\circ}{e}_2(u) \overset{\circ}{e}_2
=\frac{1-|z|^2}{2}(a\overset{\circ}{e}_1+b\overset{\circ}{e}_2),$$
where $\overset{\circ}{e}_1$ and $\overset{\circ}{e}_2$ are defined as in (\ref{2.5}). 
This implies 
\begin{equation}\label{3.1}
e_2=\frac{\nabla_bu}{|\nabla_b u|}=\frac{a\overset{\circ}{e}_1+b\overset{\circ}{e}_2}{\sqrt{a^2+b^2}},
\end{equation}
where the length $|\cdot|$ is with respect to the Levi metric $g_\theta$ defined in (\ref{Levi}). 
This together with (\ref{2.8}) implies that 
\begin{equation}\label{3.2}
e_1=-Je_2=\frac{b\overset{\circ}{e}_1-a\overset{\circ}{e}_2}{\sqrt{a^2+b^2}}. 
\end{equation}
From (\ref{2.14}) and (\ref{3.2}), 
we compute 
\begin{equation*}
\begin{split}
\nabla_{e_1}\overset{\circ}{e}_1&=\frac{b}{\sqrt{a^2+b^2}}\nabla_{\overset{\circ}{e}_1}\overset{\circ}{e}_1
-\frac{a}{\sqrt{a^2+b^2}}\nabla_{\overset{\circ}{e}_2}\overset{\circ}{e}_1
=-\frac{ax+by}{\sqrt{a^2+b^2}}\overset{\circ}{e}_2=-\frac{c}{\sqrt{a^2+b^2}}\overset{\circ}{e}_2,\\
\nabla_{e_1}\overset{\circ}{e}_2&=\frac{b}{\sqrt{a^2+b^2}}\nabla_{\overset{\circ}{e}_1}\overset{\circ}{e}_2
-\frac{a}{\sqrt{a^2+b^2}}\nabla_{\overset{\circ}{e}_2}\overset{\circ}{e}_2
=\frac{ax+by}{\sqrt{a^2+b^2}}\overset{\circ}{e}_1=\frac{c}{\sqrt{a^2+b^2}}\overset{\circ}{e}_1,
\end{split}
\end{equation*}
which together with (\ref{3.1}) gives 
\begin{equation*}
\begin{split}
\nabla_{e_1}e_1
&=\frac{b}{\sqrt{a^2+b^2}}\nabla_{e_1}\overset{\circ}{e}_1
-\frac{a}{\sqrt{a^2+b^2}}\nabla_{e_1}\overset{\circ}{e}_2\\
&=-\frac{c}{a^2+b^2}(a\overset{\circ}{e}_1+b\overset{\circ}{e}_2)
=-\frac{c}{\sqrt{a^2+b^2}}e_2.
\end{split}
\end{equation*}
Therefore, we have 
\begin{equation}\label{3.3}
H=-\frac{c}{\sqrt{a^2+b^2}}.
\end{equation}
In particular, if 
\begin{equation}\label{3.A}
\frac{c^2}{a^2+b^2}=\frac{3}{4},
\end{equation}
then it follows from (\ref{2.12}), (\ref{2.15}), (\ref{3.0}) and (\ref{3.3}) that 
$$e_1(\alpha)+\frac{1}{2}\alpha^2-\mbox{Im} A_{11}+\frac{1}{4}W+\frac{1}{6}H^2
=0.$$
From this, we have the following: 
\begin{lem}\label{lem3.1}
Suppose $\Sigma$ is a surface in $B^1\times\mathbb{R}$ 
defined by $ax+by=c$ for some real constants $a$, $b$, and $c$
satisfying \eqref{3.A}. 
Then $\Sigma$ is a minimizer for the energy $E_1$ with zero energy.  
\end{lem}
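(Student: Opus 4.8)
The plan is to exploit the fact that the functional $E_1$ is manifestly nonnegative, together with the vanishing identity already derived immediately before the statement. First I would observe that the integrand of $dA_1$ in \eqref{0.4} is the absolute value of the scalar
$$Q := e_1(\alpha)+\tfrac{1}{2}\alpha^2-\mbox{Im} A_{11}+\tfrac{1}{4}W+\tfrac{1}{6}H^2$$
raised to the power $\tfrac{3}{2}$, wedged with the area element $\theta\wedge e^1$. Since $|Q|^{3/2}\geq 0$ pointwise, it follows that $E_1(\Sigma')\geq 0$ for every nonsingular surface $\Sigma'$ in $B^1\times\mathbb{R}$. Hence the global infimum of $E_1$ is nonnegative, and any surface realizing the value zero is automatically a minimizer.

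Next I would specialize to the surface $\Sigma$ given by $ax+by=c$ and substitute the quantities computed above. By \eqref{3.0} the derivation function satisfies $\alpha\equiv 0$; by \eqref{2.12} the torsion vanishes, so $\mbox{Im}A_{11}=0$; by \eqref{2.15} the Webster curvature is the constant $W=-\tfrac{1}{2}$; and by \eqref{3.3} the $p$-mean curvature equals $H=-c/\sqrt{a^2+b^2}$. Feeding these into $Q$ collapses it to $Q=\tfrac{1}{4}W+\tfrac{1}{6}H^2=-\tfrac{1}{8}+\tfrac{1}{6}\cdot\tfrac{c^2}{a^2+b^2}$, which vanishes precisely when $\tfrac{c^2}{a^2+b^2}=\tfrac{3}{4}$, that is, under hypothesis \eqref{3.A}. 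This is exactly the identity displayed just before the lemma.

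It then remains only to conclude: since $Q\equiv 0$ on $\Sigma$, the two-form $dA_1$ vanishes identically there, and therefore $E_1(\Sigma)=0$; combined with the lower bound $E_1\geq 0$ from the first step, this shows that $\Sigma$ attains the global minimum of $E_1$ with zero energy. I would also note in passing that $\Sigma$ is genuinely nonsingular, so that $E_1(\Sigma)$ is well-defined, because $T=\partial/\partial t\in T\Sigma$ while $\theta(T)=1\neq 0$ forces $T_p\Sigma\neq\xi_p$ at every point. I expect no real obstacle in this argument: the entire content is already packaged in the preceding computations, and the only additional input required is the trivial nonnegativity of $E_1$ built into its very definition.
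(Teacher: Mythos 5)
Your proof is correct and follows essentially the same route as the paper: the paper's argument is precisely the computation that $\alpha\equiv 0$, $\mbox{Im}A_{11}=0$, $W=-\tfrac{1}{2}$, and $H=-c/\sqrt{a^2+b^2}$ force the integrand of $dA_1$ to vanish identically under \eqref{3.A}, combined with the manifest nonnegativity of $E_1$. Your additional remarks (the explicit lower bound $E_1\geq 0$ and the nonsingularity check via $\theta(T)=1\neq 0$) are left implicit in the paper but are correct and harmless.
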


From (\ref{2.15}) and (\ref{3.0}), we have 
$$H_{cr}=e_1(\alpha)+\frac{1}{2}\alpha^2-\mbox{Im} A_{11}+\frac{1}{4}W+\frac{1}{6}H^2
=-\frac{1}{8}+\frac{1}{6}\left(\frac{c^2}{a^2+b^2}\right).$$
Suppose that $H_{cr}\neq 0$, i.e. 
(\ref{3.A}) does not hold. 
Note that $H$ is constant by (\ref{3.3}). 
Combining this with (\ref{3.0}), we obtain 
$$|H_{cr}|\mathfrak{f}=0$$
by (\ref{0.1}).
Hence, it follows from (\ref{EE_for_E1})-(\ref{0.2}) 
that $\mathcal{E}_1=0$ if and only if 
$$\frac{6}{4}HW+\frac{2}{3}H^3=0,$$
which is equivalent to 
$$H\left(\frac{2}{3}H^2-\frac{3}{4}\right)=0$$
by (\ref{2.15}). Solving this and using (\ref{3.3}), we get
\begin{equation}\label{3.B}
c=0~~\mbox{ or }~~\frac{c^2}{a^2+b^2}=\frac{9}{8}.
\end{equation}
From this, we have the following: 

\begin{lem}\label{lem3.2}
Suppose $\Sigma$ is a surface in $B^1\times\mathbb{R}$ 
defined by $ax+by=c$ for some real constants $a$, $b$, and $c$
satisfying \eqref{3.B}. 
Then $\Sigma$ is a
non-minimizing critical point for the energy $E_1$.  
\end{lem}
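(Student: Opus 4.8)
The plan is to treat the two assertions—that $\Sigma$ is a critical point of $E_1$ and that it is not a minimizer—in turn, using the data already recorded for the plane $ax+by=c$: $\alpha\equiv 0$ by \eqref{3.0}, the constant $p$-mean curvature $H=-c/\sqrt{a^2+b^2}$ by \eqref{3.3}, the constant Webster curvature $W=-\tfrac12$ by \eqref{2.15}, and vanishing torsion by \eqref{2.12}.

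For the critical-point assertion the reduction is exactly the computation displayed just before the statement, which I would recapitulate as follows. Since $\alpha\equiv 0$ and $H$, $W$ are constant, each summand of \eqref{0.1} carries as a factor one of $\alpha$, $e_1(H)$, $(T+\alpha e_2)(H)$, or $(T+\alpha e_2)\!\big(e_1(\alpha)+\tfrac12\alpha^2\big)$, all of which vanish; hence $|H_{cr}|\mathfrak{f}=0$, and because $H_{cr}$ is a nonzero constant (see below) this forces $\mathfrak{f}\equiv 0$. Formula \eqref{EE_for_E1} then collapses to its last line, which by \eqref{0.2} with $\alpha\equiv 0$ equals $\tfrac12\,sign(H_{cr})|H_{cr}|^{1/2}\big(\tfrac32 HW+\tfrac23 H^3\big)$. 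With $W=-\tfrac12$ the parenthetical factor is $H\big(\tfrac23 H^2-\tfrac34\big)$, whose vanishing is precisely hypothesis \eqref{3.B}; thus $\mathcal{E}_1=0$ and $\Sigma$ is a critical point.

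For the non-minimizing assertion the decisive observation is that $H_{cr}$ is a nonzero constant along $\Sigma$ under \eqref{3.B}. From $H_{cr}=-\tfrac18+\tfrac16\,c^2/(a^2+b^2)$ I would read off $H_{cr}=-\tfrac18$ when $c=0$ and $H_{cr}=-\tfrac1{16}$ when $c^2/(a^2+b^2)=\tfrac38$; in either case $H_{cr}\ne 0$ (in particular $c^2/(a^2+b^2)\ne\tfrac34$, which also legitimizes the factor $sign(H_{cr})|H_{cr}|^{1/2}$ used above). Consequently the integrand $|H_{cr}|^{3/2}\,\theta\wedge e^1$ of \eqref{0.4} is a strictly positive multiple of the area form, so $E_1(\Sigma)>0$. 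Since $dA_1$ is always a nonnegative multiple of the area form we have $E_1\ge 0$, and the value $0$ is attained by the surfaces of Lemma \ref{lem3.1} (those satisfying \eqref{3.A}, for which $H_{cr}\equiv 0$). Hence $0$ is the minimum of $E_1$ while $E_1(\Sigma)>0$, proving that $\Sigma$ is a non-minimizing critical point.

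The nearest thing to an obstacle is the term-by-term verification that $|H_{cr}|\mathfrak{f}=0$: one must check that the derivative factors $e_1(H)$ and $(T+\alpha e_2)(H)$ vanish because $H$ is constant along $\Sigma$, and that every other summand of \eqref{0.1} carries an explicit $\alpha$. After that the argument is pure substitution into \eqref{EE_for_E1}--\eqref{0.2} together with the elementary sign computation for $H_{cr}$.
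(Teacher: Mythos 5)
Your argument follows the same route as the paper's: with $\alpha\equiv 0$ and $H$, $W$ constant every summand of \eqref{0.1} vanishes, so $|H_{cr}|\mathfrak{f}=0$ and hence $\mathfrak{f}\equiv 0$ since $H_{cr}$ is a nonzero constant, and \eqref{EE_for_E1} collapses to the vanishing of $\tfrac{3}{2}HW+\tfrac{2}{3}H^3=H\big(\tfrac{2}{3}H^2-\tfrac{3}{4}\big)$. The non-minimizing half, which the paper leaves implicit, you supply correctly: $dA_1$ is a nonnegative multiple of $\theta\wedge e^1$, the value $0$ is attained by the surfaces of Lemma \ref{lem3.1}, and $E_1(\Sigma)=\int_\Sigma|H_{cr}|^{3/2}\,\theta\wedge e^1>0$ because $H_{cr}$ equals the nonzero constant $-\tfrac18$ or $-\tfrac1{16}$.

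However, the final step --- ``whose vanishing is precisely hypothesis \eqref{3.B}'' --- is asserted without verification and does not hold as written. Solving $\tfrac{2}{3}H^2=\tfrac{3}{4}$ gives $H^2=\tfrac{9}{8}$, i.e.\ $c^2/(a^2+b^2)=\tfrac{9}{8}$ by \eqref{3.3}, not $\tfrac{3}{8}$; under the second alternative of \eqref{3.B} one has $\tfrac{2}{3}\cdot\tfrac{3}{8}-\tfrac{3}{4}=-\tfrac{1}{2}\neq 0$, so $\mathcal{E}_1\neq 0$ and that surface is \emph{not} a critical point. (The same arithmetic slip occurs in the paper's own derivation of \eqref{3.B}; note moreover that $c^2/(a^2+b^2)=\tfrac{9}{8}>1$ would force the plane $ax+by=c$ to miss $B^1$ entirely, so only the alternative $c=0$ yields a genuine example.) Your method is the right one, but this identification with \eqref{3.B} needs to be checked rather than assumed.
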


\subsection{Example 2.}
More generally, we consider the surface $\Sigma$ in $B^1\times\mathbb{R}$ 
defined by $f(x,y)=c$ for some constant $c$. 
Since $\frac{\partial}{\partial t}$ lies in $T\Sigma$, 
the deviation function vanishes, i.e.
\begin{equation}\label{A3.4}
\alpha\equiv 0.
\end{equation} 
The defining function is 
$u=f-c$. 
The sub-gradient of $u$ is 
\begin{equation*}
\nabla_b u=\overset{\circ}{e}_1(u) \overset{\circ}{e}_1+\overset{\circ}{e}_2(u) \overset{\circ}{e}_2
=\frac{1-|z|^2}{2}(f_x\overset{\circ}{e}_1+f_y\overset{\circ}{e}_2),
\end{equation*}
which implies that 
\begin{equation}\label{A3.5}
e_2=\frac{\nabla_b u}{|\nabla_b u|}
=\frac{f_x\overset{\circ}{e}_1+f_y\overset{\circ}{e}_2}{\sqrt{f_x^2+f_y^2}}.
\end{equation}
From (\ref{2.8}) and (\ref{A3.5}), we find 
\begin{equation}\label{A3.6}
e_1=-Je_2=\frac{f_y\overset{\circ}{e}_1-f_x\overset{\circ}{e}_2}{\sqrt{f_x^2+f_y^2}}.
\end{equation}
Since 
\begin{align*}
\overset{\circ}{e}_1\Bigg(\frac{f_x}{\sqrt{f_x^2+f_y^2}}\Bigg)=\frac{1-|z|^2}{2}\frac{f_y^2f_{xx}-f_xf_yf_{xy}}{(f_x^2+f_y^2)^{3/2}},\\
\overset{\circ}{e}_2\Bigg(\frac{f_x}{\sqrt{f_x^2+f_y^2}}\Bigg)=\frac{1-|z|^2}{2}\frac{f_y^2f_{xy}-f_xf_yf_{yy}}{(f_x^2+f_y^2)^{3/2}},\\
\overset{\circ}{e}_1\Bigg(\frac{f_y}{\sqrt{f_x^2+f_y^2}}\Bigg)=\frac{1-|z|^2}{2}\frac{f_x^2f_{xy}-f_xf_yf_{xx}}{(f_x^2+f_y^2)^{3/2}},\\
\overset{\circ}{e}_2\Bigg(\frac{f_y}{\sqrt{f_x^2+f_y^2}}\Bigg)=\frac{1-|z|^2}{2}\frac{f_x^2f_{yy}-f_xf_yf_{xy}}{(f_x^2+f_y^2)^{3/2}},
\end{align*}
we have
\begin{equation}\label{A3.7}
\begin{split}
e_1\Bigg(\frac{f_x}{\sqrt{f_x^2+f_y^2}}\Bigg)
&= \frac{f_y}{\sqrt{f_x^2+f_y^2}} \overset{\circ}{e}_1\Bigg(\frac{f_x}{\sqrt{f_x^2+f_y^2}}\Bigg)
- \frac{f_x}{\sqrt{f_x^2+f_y^2}} \overset{\circ}{e}_2\Bigg(\frac{f_x}{\sqrt{f_x^2+f_y^2}}\Bigg)\\
&=\frac{1-|z|^2}{2}\frac{f_y}{(f_x^2+f_y^2)^{3/2}}(f_y^2f_{xx}-2f_xf_y f_{xy}+f_x^2 f_{yy}),\\
e_1\Bigg(\frac{f_y}{\sqrt{f_x^2+f_y^2}}\Bigg)
&= \frac{f_y}{\sqrt{f_x^2+f_y^2}} \overset{\circ}{e}_1\Bigg(\frac{f_y}{\sqrt{f_x^2+f_y^2}}\Bigg)
- \frac{f_x}{\sqrt{f_x^2+f_y^2}} \overset{\circ}{e}_2\Bigg(\frac{f_y}{\sqrt{f_x^2+f_y^2}}\Bigg)\\
&=-\frac{1-|z|^2}{2}\frac{f_x}{(f_x^2+f_y^2)^{3/2}}(f_y^2f_{xx}-2f_xf_y f_{xy}+f_x^2 f_{yy}).
\end{split}
\end{equation} 
Combining (\ref{A3.6}) and (\ref{A3.7}), we compute 
\begin{align*}
&\nabla_{e_1}e_1
=\nabla_{e_1}\left(\frac{f_y\overset{\circ}{e}_1-f_x\overset{\circ}{e}_2}{\sqrt{f_x^2+f_y^2}}\right)\\
&=e_1\Bigg(\frac{f_y}{\sqrt{f_x^2+f_y^2}}\Bigg)\overset{\circ}{e}_1-e_1\Bigg(\frac{f_x}{\sqrt{f_x^2+f_y^2}}\Bigg)\overset{\circ}{e}_2
+\frac{f_y}{\sqrt{f_x^2+f_y^2}}\nabla_{e_1}\overset{\circ}{e}_1
-\frac{f_x}{\sqrt{f_x^2+f_y^2}}\nabla_{e_1}\overset{\circ}{e}_2\\
&=e_1\Bigg(\frac{f_y}{\sqrt{f_x^2+f_y^2}}\Bigg)\overset{\circ}{e}_1-e_1\Bigg(\frac{f_x}{\sqrt{f_x^2+f_y^2}}\Bigg)\overset{\circ}{e}_2\\
&\hspace{4mm}+\frac{f_y}{\sqrt{f_x^2+f_y^2}}\left(\frac{f_y}{\sqrt{f_x^2+f_y^2}}\nabla_{\overset{\circ}{e}_1}\overset{\circ}{e}_1
-\frac{f_x}{\sqrt{f_x^2+f_y^2}}\nabla_{\overset{\circ}{e}_2}\overset{\circ}{e}_1\right)\\
&\hspace{4mm}
-\frac{f_x}{\sqrt{f_x^2+f_y^2}}\left(\frac{f_y}{\sqrt{f_x^2+f_y^2}}\nabla_{\overset{\circ}{e}_1}\overset{\circ}{e}_2
-\frac{f_x}{\sqrt{f_x^2+f_y^2}}\nabla_{\overset{\circ}{e}_2}\overset{\circ}{e}_2\right)\\
&=-\frac{1-|z|^2}{2}\frac{f_x}{(f_x^2+f_y^2)^{3/2}}(f_y^2f_{xx}-2f_xf_y f_{xy}+f_x^2 f_{yy})\overset{\circ}{e}_1\\
&\hspace{4mm}
-\frac{1-|z|^2}{2}\frac{f_y}{(f_x^2+f_y^2)^{3/2}}(f_y^2f_{xx}-2f_xf_y f_{xy}+f_x^2 f_{yy})\overset{\circ}{e}_2\\
&\hspace{4mm}
+\frac{f_y}{ f_x^2+f_y^2}\left(- f_y y\overset{\circ}{e}_2
- f_x x\overset{\circ}{e}_2\right) 
-\frac{f_x}{ f_x^2+f_y^2 }\left( f_y y\overset{\circ}{e}_1
+ f_x x\overset{\circ}{e}_1\right)\\
&=-\left(\frac{1-|z|^2}{2}\frac{f_y^2f_{xx}-2f_xf_y f_{xy}+f_x^2f_{yy}}{f_x^2+f_y^2} +\frac{xf_x+yf_y}{\sqrt{f_x^2+f_y^2}}\right)\frac{f_x\overset{\circ}{e}_1+f_y\overset{\circ}{e}_2}{\sqrt{f_x^2+f_y^2}}.
\end{align*}
Combining this with (\ref{A3.5}), we deduce that 
\begin{equation}\label{A3.8}
H=-\frac{1-|z|^2}{2}\frac{f_y^2f_{xx}-2f_xf_y f_{xy}+f_x^2f_{yy}}{f_x^2+f_y^2}-\frac{xf_x+yf_y}{\sqrt{f_x^2+f_y^2}}
\end{equation}

\subsubsection{Example 2.1} If $f=f(r)$ depends only on $r=\sqrt{x^2+y^2}$ such that $f'(r)>0$, then
\begin{equation}\label{A3.9}
\begin{split}
&f_x=\frac{xf'}{r}, ~~f_y=\frac{yf'}{r},~~f_{xx}=\frac{f'}{r}-\frac{x^2f'}{r^3}+\frac{x^2f''}{r^2},\\
&
f_{xy}=-\frac{xyf'}{r^3}+\frac{xyf''}{r^2} ,~~
f_{yy}=\frac{f'}{r}-\frac{y^2f'}{r^3}+\frac{y^2f''}{r^2}.
\end{split}
\end{equation}
Here, $'$ denotes the derivative of $f$ with respect to $r$. 
Substituting (\ref{A3.9}) into (\ref{A3.8}) yields 
\begin{equation}\label{A3.10}
H=-\frac{1-r^2}{2}\frac{f'}{r}-r.
\end{equation}
Note that $H<0$, since $f'>0$ by assumption. 
It follows from (\ref{2.12}), (\ref{2.15}), (\ref{A3.4}) and (\ref{A3.10}) that 
$$e_1(\alpha)+\frac{1}{2}\alpha^2-\mbox{Im} A_{11}+\frac{1}{4}W+\frac{1}{6}H^2
=0$$
if and only if 
\begin{equation}\label{A3.11}
\left(\frac{1-r^2}{2}\frac{f'}{r}+r\right)^2=\frac{3}{4}.
\end{equation}

In particular, take $f(r)=ar^2$ for some positive constant $a$. Then $f(r)=ar^2=c$ and 
$f'(r)=2ar$. Putting these into (\ref{A3.11}),  we find
\begin{equation*}
a\Big(1-\frac{c}{a}\Big)+\sqrt{\frac{c}{a}} =\frac{\sqrt{3}}{2}.
\end{equation*}
Viewing it as a quadratic equation in $\sqrt{\frac{c}{a}}
$, we can solve it to get 
\begin{equation}\label{A3.12}
\sqrt{\frac{c}{a}}=\frac{1-\sqrt{4a^2+1-2a\sqrt{3}}}{2a}.
\end{equation}
In $B^1\times\mathbb{R}$, we must have $0\leq r^2<1$. 
That is to say, $0\leq r^2=\frac{c}{a}<1$. 
If we choosing $a$ such that the right hand side of (\ref{A3.12}) 
is between $0$ and $1$, 
then $\Sigma$ defined by 
$ar^2=c$ is a minimizer for the energy $E_1$ with zero energy. 
For example, we can choose $a=1/2$, then 
$$0<\frac{1-\sqrt{4a^2+1-2a\sqrt{3}}}{2a}=1-\sqrt{2-\sqrt{3}}<1,$$
and it follows from (\ref{A3.12}) that 
$c=(1-\sqrt{2-\sqrt{3}})^2/2$. 
Thus, we have the following: 

\begin{lem}\label{lem3.3}
The surface 
$\Sigma$ in $B^1\times\mathbb{R}$ defined by 
$x^2+y^2=(1-\sqrt{2-\sqrt{3}})^2$ is a minimizer for the energy $E_1$ with zero energy.
\end{lem}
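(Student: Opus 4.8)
The plan is to identify Lemma \ref{lem3.3} as the special case $a=\tfrac12$ of the rotationally symmetric construction of Example 2.1, and then to deduce minimality from the pointwise vanishing of the integrand of $dA_1$, exactly as in the proof of Lemma \ref{lem3.1}. First I would observe that the surface $x^2+y^2=(1-\sqrt{2-\sqrt3})^2$ is the level set $\{f(r)=c\}$ with $f(r)=\tfrac12 r^2$ and $c=(1-\sqrt{2-\sqrt3})^2/2$, so that it is the vertical cylinder of radius $r_0:=\sqrt{c/a}=1-\sqrt{2-\sqrt3}$. A numerical check ($\sqrt3\approx1.732$, whence $2-\sqrt3\approx0.268$ and $\sqrt{2-\sqrt3}\approx0.518$) gives $0<r_0<1$, so $\Sigma$ lies inside the disk bundle $B^1\times\mathbb{R}$; moreover $\Sigma$ is nonsingular everywhere, since $T=\partial/\partial t$ is always tangent to $\Sigma$ while $\theta(T)=1$ forces $T_p\Sigma\neq\xi_p$ at each point.

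Next I would feed $a=\tfrac12$ into the computation already carried out in Example 2.1. Because $f$ depends only on $r$, we have $\alpha\equiv0$ by \eqref{A3.4} and the $p$-mean curvature is given by \eqref{A3.10}. By \eqref{A3.11} the curvature quantity $e_1(\alpha)+\tfrac12\alpha^2-\mbox{Im}A_{11}+\tfrac14 W+\tfrac16 H^2$ vanishes along $\Sigma$ precisely when $\bigl(\tfrac{1-r^2}{2}\tfrac{f'}{r}+r\bigr)^2=\tfrac34$, which for $f=ar^2$ on the level set $ar^2=c$ is solved by \eqref{A3.12}. Substituting $a=\tfrac12$ gives $4a^2+1-2a\sqrt3=2-\sqrt3$, so the right-hand side of \eqref{A3.12} equals $1-\sqrt{2-\sqrt3}=r_0$; this is exactly the radius defining $\Sigma$, and hence \eqref{A3.11} holds identically on $\Sigma$.

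Finally I would conclude. Since the integrand of $dA_1$ in \eqref{0.4} is the $3/2$-power of the absolute value of the curvature quantity just shown to vanish, we get $dA_1\equiv0$ on $\Sigma$ and therefore $E_1(\Sigma)=\int_\Sigma dA_1=0$ by \eqref{0.5}. As $E_1\ge0$ for every nonsingular surface (the area form $\theta\wedge e^1$ restricts positively to $\Sigma$), the value $0$ is the global minimum and $\Sigma$ is a minimizer with zero energy. The entire argument parallels Lemma \ref{lem3.1}; the only genuinely new ingredient is the quadratic root extraction \eqref{A3.12}, and the sole point requiring care — the main, if modest, obstacle — is confirming that the relevant root lands in the admissible interval $(0,1)$, so that $\Sigma$ is a genuine nonempty surface inside $B^1\times\mathbb{R}$ rather than a boundary or empty locus.
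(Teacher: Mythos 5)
Your proposal is correct and follows essentially the same route as the paper: it specializes Example 2.1 to $f(r)=ar^2$ with $a=\tfrac12$, solves the quadratic \eqref{A3.12} to find the radius $1-\sqrt{2-\sqrt{3}}$, verifies that this root lies in $(0,1)$, and concludes that the integrand of $dA_1$ vanishes identically so that $E_1(\Sigma)=0$ is the global minimum. The additional remarks on nonsingularity and nonnegativity of $E_1$ are consistent with the paper's (implicit) reasoning in Lemma \ref{lem3.1}.
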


\subsection{Example 3.} 

We consider the surface $\Sigma$ in $B^1\times\mathbb{R}$ 
defined by $f\big(\ln(1-r^2)\big)=t^2$ where $r^2=|z|^2=x^2+y^2$. 
Note that this is well-defined, since $1-r^2=1-|z|^2>0$ in  $B^1$.
The defining function 
$u=f-t^2$. 
The sub-gradient of $u$ is given by 
\begin{equation}\label{3.4}
\nabla_b u=(-xf'-4yt)\overset{\circ}{e}_1
+(-yf'+4xt)\overset{\circ}{e}_2.
\end{equation}
From (\ref{3.4}), we have 
\begin{equation}\label{3.5}
|\nabla_b u|^2=(-xf'-4yt)^2+(-yf'+4xt)^2=r^2\big((f')^2+16t^2\big).
\end{equation}
By (\ref{3.4}) and (\ref{3.5}), we find 
\begin{equation}\label{3.6}
e_2=\frac{\nabla_b u}{|\nabla_b u|}
=\frac{(-xf'-4yt)\overset{\circ}{e}_1
+(-yf'+4xt)\overset{\circ}{e}_2}
{r\sqrt{(f')^2+16t^2}}.
\end{equation}
From (\ref{2.8}) and (\ref{3.6}), we have 
\begin{equation}\label{3.7}
e_1=-Je_2=\frac{(-yf'+4xt)\overset{\circ}{e}_1
+(xf'+4yt)\overset{\circ}{e}_2}
{r\sqrt{(f')^2+16t^2}}.
\end{equation}

The tangent plane $T\Sigma$ is spanned by 
$$\frac{\partial}{\partial x}-\frac{xf'}{t(1-r^2)}\frac{\partial}{\partial t},~~
\frac{\partial}{\partial y}-\frac{yf'}{t(1-r^2)}\frac{\partial}{\partial t}.$$
In order to find the deviation function $\alpha$, 
we find $a$ and $b$ such that 
\begin{equation}\label{3.8A}
T+\alpha e_2=a\left(\frac{\partial}{\partial x}-\frac{xf'}{t(1-r^2)}\frac{\partial}{\partial t}\right)
+b\left(\frac{\partial}{\partial y}-\frac{yf'}{t(1-r^2)}\frac{\partial}{\partial t}\right),
\end{equation}
which is equivalent to 
\begin{equation}\label{3.8}
\begin{split}
&\frac{\partial}{\partial t}+\alpha\frac{(-xf'-4yt)\overset{\circ}{e}_1
+(-yf'+4xt)\overset{\circ}{e}_2}
{r\sqrt{(f')^2+16t^2}}\\
&=a\left(\frac{\partial}{\partial x}-\frac{xf'}{t(1-r^2)}\frac{\partial}{\partial t}\right)
+b\left(\frac{\partial}{\partial y}-\frac{yf'}{t(1-r^2)}\frac{\partial}{\partial t}\right),
\end{split}
\end{equation}
by (\ref{1.2}) and (\ref{3.6}). 
From (\ref{3.8}), we find 
\begin{equation}\label{3.9A}
\begin{split}
\alpha\frac{(-xf'-4yt)}{r\sqrt{(f')^2+16t^2}}\frac{1-r^2}{2}&=a,\\
\alpha\frac{(-yf'+4xt)}{r\sqrt{(f')^2+16t^2}}\frac{1-r^2}{2}&=b,\\
1+2y\alpha\frac{(-xf'-4yt)}{r\sqrt{(f')^2+16t^2}}
-2x\alpha\frac{(-yf'+4xt)}{r\sqrt{(f')^2+16t^2}}&=-\frac{axf'}{t(1-r^2)}-\frac{byf'}{t(1-r^2)}.
\end{split}
\end{equation}
Substituting the first two equations into the third yields 
\begin{equation}\label{3.9}
\alpha=\frac{2t}{r\sqrt{(f')^2+16t^2}}.
\end{equation}

To compute the $p$-mean curvature, we compute 
\begin{equation}\label{3.10}
\begin{split}
\nabla_{e_1}e_1
&=
\frac{-yf'+4xt}{r\sqrt{(f')^2+16t^2}}
\nabla_{e_1}\overset{\circ}{e}_1
+\frac{xf'+4yt}
{r\sqrt{(f')^2+16t^2}}\nabla_{e_1}\overset{\circ}{e}_2\\
&\hspace{4mm}+e_1\left(\frac{-yf'+4xt}{r\sqrt{(f')^2+16t^2}}\right)\overset{\circ}{e}_1
+e_1\left(\frac{xf'+4yt}
{r\sqrt{(f')^2+16t^2}}\right)\overset{\circ}{e}_2
\end{split}
\end{equation}
It follows from (\ref{2.14}), (\ref{3.6}) and (\ref{3.7}) that 
\begin{equation}\label{3.11}
\begin{split}
&\frac{-yf'+4xt}{r\sqrt{(f')^2+16t^2}}
\nabla_{e_1}\overset{\circ}{e}_1
+\frac{xf'+4yt}
{r\sqrt{(f')^2+16t^2}}\nabla_{e_1}\overset{\circ}{e}_2\\
&=\frac{-yf'+4xt}{r\sqrt{(f')^2+16t^2}}
\left(\frac{-yf'+4xt}{r\sqrt{(f')^2+16t^2}}
\nabla_{\overset{\circ}{e}_1}\overset{\circ}{e}_1
+\frac{xf'+4yt}
{r\sqrt{(f')^2+16t^2}}\nabla_{\overset{\circ}{e}_2}\overset{\circ}{e}_1\right)\\
&\hspace{4mm}
+\frac{xf'+4yt}
{r\sqrt{(f')^2+16t^2}}\left(\frac{-yf'+4xt}{r\sqrt{(f')^2+16t^2}}
\nabla_{\overset{\circ}{e}_1}\overset{\circ}{e}_2
+\frac{xf'+4yt}
{r\sqrt{(f')^2+16t^2}}\nabla_{\overset{\circ}{e}_2}\overset{\circ}{e}_2\right)\\
&=\frac{r^2f'}
{r\sqrt{(f')^2+16t^2}}e_2.
\end{split}
\end{equation}
On the other hand, we have 
\begin{equation*}
\begin{split}
\overset{\circ}{e}_1\left(\frac{-yf'+4xt}{r\sqrt{(f')^2+16t^2}}\right)
&=\frac{xyf''+2(1-r^2)t+8xy}{r\sqrt{(f')^2+16t^2}}-\frac{(-yf'+4xt)x(1-r^2)}{2r^3\sqrt{(f')^2+16t^2}}\\
&\hspace{4mm}-\frac{(-yf'+4xt)}{r((f')^2+16t^2)^{\frac{3}{2}}}(-xf'f''+32yt)
\end{split}
\end{equation*}
and 
\begin{equation*}
\begin{split}
\overset{\circ}{e}_2\left(\frac{-yf'+4xt}{r\sqrt{(f')^2+16t^2}}\right)
&=\frac{y^2f''-\frac{1-r^2}{2}f'-8x^2}{r\sqrt{(f')^2+16t^2}}-\frac{(-yf'+4xt)y(1-r^2)}{2r^3\sqrt{(f')^2+16t^2}}\\
&\hspace{4mm}-\frac{(-yf'+4xt)}{r((f')^2+16t^2)^{\frac{3}{2}}}(-yf'f''-32xt).
\end{split}
\end{equation*}
Hence, we have 
\begin{equation}\label{3.12}
\begin{split}
&e_1\left(\frac{-yf'+4xt}{r\sqrt{(f')^2+16t^2}}
\right)\\
&=\frac{-yf'+4xt}{r\sqrt{(f')^2+16t^2}}
\overset{\circ}{e}_1\left(\frac{-yf'+4xt}{r\sqrt{(f')^2+16t^2}}\right)
+\frac{xf'+4yt}
{r\sqrt{(f')^2+16t^2}}\overset{\circ}{e}_2\left(\frac{-yf'+4xt}{r\sqrt{(f')^2+16t^2}}\right)\\
&=\left[-\frac{\frac{1-r^2}{2}f'+8r^2}{r^2((f')^2+16t^2)}+\frac{16f''r^2t^2+128r^2t^2}{r^2((f')^2+16t^2)^{2}}\right](xf'+4yt).
\end{split}
\end{equation}
Similarly, we have
\begin{equation*}
\begin{split}
\overset{\circ}{e}_1\left(\frac{xf'+4yt}
{r\sqrt{(f')^2+16t^2}}\right)
&=\frac{-x^2f''+\frac{1-r^2}{2}f'+8y^2}{r\sqrt{(f')^2+16t^2}}-\frac{(xf'+4yt)x(1-r^2)}{2r^3\sqrt{(f')^2+16t^2}}\\
&\hspace{4mm}-\frac{(xf'+4yt)}{r((f')^2+16t^2)^{\frac{3}{2}}}(-xf'f''+32yt)
\end{split}
\end{equation*}
and 
\begin{equation*}
\begin{split}
\overset{\circ}{e}_2\left(\frac{xf'+4yt}
{r\sqrt{(f')^2+16t^2}}\right)
&=\frac{-xyf''+2(1-r^2)t-8xy}{r\sqrt{(f')^2+16t^2}}-\frac{(xf'+4yt)y(1-r^2)}{2r^3\sqrt{(f')^2+16t^2}}\\
&\hspace{4mm}-\frac{(xf'+4yt)}{r((f')^2+16t^2)^{\frac{3}{2}}}(-yf'f''-32xt).
\end{split}
\end{equation*}
From these, we have 
\begin{equation}\label{3.13}
\begin{split}
&e_1\left(\frac{xf'+4yt}{r\sqrt{(f')^2+16t^2}}
\right)\\
&=\frac{-yf'+4xt}{r\sqrt{(f')^2+16t^2}}
\overset{\circ}{e}_1\left(\frac{xf'+4yt}{r\sqrt{(f')^2+16t^2}}
\right)
+\frac{xf'+4yt}
{r\sqrt{(f')^2+16t^2}}\overset{\circ}{e}_2\left(\frac{xf'+4yt}{r\sqrt{(f')^2+16t^2}}
\right)\\
&=\left[\frac{\frac{1-r^2}{2}f'+8r^2}{r^2((f')^2+16t^2)}-\frac{16f''r^2t^2+128r^2t^2}{r^2((f')^2+16t^2)^{2}}\right]
(-yf'+4xt).
\end{split}
\end{equation}
Putting (\ref{3.12}) and (\ref{3.13}) together and using (\ref{3.6}), we obtain 
\begin{equation}\label{3.14}
\begin{split}
&e_1\left(\frac{-yf'+4xt}{r\sqrt{(f')^2+16t^2}}\right)\overset{\circ}{e}_1
+e_1\left(\frac{xf'+4yt}
{r\sqrt{(f')^2+16t^2}}\right)\overset{\circ}{e}_2\\
&=\left[\frac{\frac{1-r^2}{2}f'+8r^2}{r\sqrt{(f')^2+16t^2}}-\frac{16f''rt^2+128rt^2}{ ((f')^2+16t^2)^{\frac{3}{2}}}\right]e_2. 
\end{split}
\end{equation}
Substituting (\ref{3.11}) and (\ref{3.14}) into (\ref{3.10}), we find 
\begin{equation*}
\begin{split}
\nabla_{e_1}e_1
=\left[\frac{r^2f'+\frac{1-r^2}{2}f'+8r^2}{r\sqrt{(f')^2+16t^2}}-\frac{16f''rt^2+128rt^2}{ ((f')^2+16t^2)^{\frac{3}{2}}}\right]e_2. 
\end{split}
\end{equation*}
That is to say, the $p$-mean curvature is equal to 
\begin{equation}\label{3.16}
H=\frac{\frac{1+r^2}{2}f'+8r^2}{r\sqrt{(f')^2+16t^2}}-\frac{16rt^2(f'' +8)}{ ((f')^2+16t^2)^{\frac{3}{2}}}.
\end{equation}
It follows from (\ref{3.7}) and (\ref{3.9})
that 
\begin{equation}\label{3.17}
\begin{split}
e_1(\alpha)
=-\frac{4t^2(1-r^2)+4r^2f'}{r^2((f')^2+16t^2)}
+\frac{8r^2t^2f'(f''+8)}{((f')^2+16t^2)^2}.
\end{split}
\end{equation}

Now, substituting 
(\ref{2.15}, (\ref{3.9}), 
(\ref{3.16}) and (\ref{3.17})
into (\ref{0.1})
and using the fact that $f(\ln(1-r^2))=t^2$, we obtain 
\begin{equation}\label{3.22}
\begin{split}
H_{cr}&=e_1(\alpha)+\frac{1}{2}\alpha^2+\frac{1}{4}W+\frac{1}{6}H^2\\
&=-\frac{4f(1-r^2)+4r^2f'}{r^2((f')^2+16f)}
+\frac{8r^2ff'(f''+8)}{((f')^2+16f)^2}+ \frac{2f}{r^2((f')^2+16f)}-\frac{1}{8} \\
&\hspace{4mm}+\frac{1}{6}\left[\frac{(\frac{1+r^2}{2}f'+8r^2)^2}{r^2((f')^2+16f)}
-\frac{32rf(f'' +8)(\frac{1+r^2}{2}f'+8r^2)}{r((f')^2+16f)^{2}}+\frac{256r^2f^2(f'' +8)^2}{ ((f')^2+16f)^{3}}\right]. 
\end{split}
\end{equation}
From this, we see that $H_{cr}=0$ if and only if 
the right hand side of (\ref{3.22}) vanishes, 
which is an ODE in $f$.

\subsubsection{Example 3.1} Take $f\equiv c$ for some positive constant $c$. 
This implies that 
\begin{equation}\label{3.18}
f'=f''=0~~\mbox{ and }~~t^2=f=c. 
\end{equation}
Substituting (\ref{3.18}) into 
(\ref{3.9}), (\ref{3.16}), and (\ref{3.17}), we obtain 
\begin{equation}\label{3.19}
\alpha=\frac{1}{2r},~~H=0,~~e_1(\alpha)=-\frac{1-r^2}{4r^2}.
\end{equation}
Substituting (\ref{2.15}) and (\ref{3.19}) into (\ref{0.1}), 
we have
$$H_{cr}=e_1(\alpha)+\frac{1}{2}\alpha^2+\frac{1}{4}W+\frac{1}{6}H^2
=-\frac{1-r^2}{8r^2}\neq 0.$$
On the other hand, since $H=0$ by (\ref{3.19}), 
it follows from (\ref{0.1}) and (\ref{0.2}) that 
\begin{equation}\label{3.20}
\begin{split}
|H_{cr}|\mathfrak{f}&=\frac{3}{2}(T+\alpha e_2)\left(e_1(\alpha)+\frac{1}{2}\alpha^2\right),\\
9h_{00}+6h_{11}h_{10}+\frac{2}{3}h_{11}^3
&=9(T+\alpha e_2)(\alpha).
\end{split}
\end{equation}
By (\ref{3.8A}),  (\ref{3.9A}) and (\ref{3.9}), we find 
\begin{equation}\label{3.21}
T+\alpha e_2=\frac{1-r^2}{4r^2}\left(-y\frac{\partial}{\partial x}+x\frac{\partial}{\partial y}\right). 
\end{equation}
It follows from (\ref{3.21}) that 
$(T+\alpha e_2)(r)=0$, which implies that 
$(T+\alpha e_2)g(r)=0$ for any function $g$ depending only on $r$. 
In view of this fact, we can conclude 
from (\ref{EE_for_E1}),  (\ref{3.19}) and (\ref{3.20})
that $\mathcal{E}_1=0$. 
From this, we have the following: 

\begin{lem}\label{lem3.4}
The surface 
$\Sigma$ in $B^1\times\mathbb{R}$ defined by 
$t^2=c$ is a non-minimizing critical point for the energy $E_1$. 
\end{lem}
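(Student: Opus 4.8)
The plan is to establish the two assertions in turn: that $\Sigma$ is a critical point, i.e.\ $\mathcal{E}_1=0$, and that it is not a minimizer because $E_1(\Sigma)>0$.

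For the critical-point assertion I would start from the data already recorded in \eqref{3.19}, namely $\alpha=\frac{1}{2r}$, $H=0$, and $e_1(\alpha)=-\frac{1-r^2}{4r^2}$, which give $H_{cr}=-\frac{1-r^2}{8r^2}\neq 0$. The decisive structural fact is \eqref{3.21}: the vector field $V:=T+\alpha e_2$ equals $\frac{1-r^2}{4r^2}\big(-y\frac{\partial}{\partial x}+x\frac{\partial}{\partial y}\big)$, the rotational field, so that $V$ annihilates every function depending only on $r$. Since $\alpha$ and $e_1(\alpha)+\frac{1}{2}\alpha^2$ are both radial, applying $V$ to them yields zero. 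Because $H=0$, the quantities $\mathfrak{f}$ and $9h_{00}+6h_{11}h_{10}+\frac{2}{3}h_{11}^3$ reduce to the expressions in \eqref{3.20}, whence $|H_{cr}|\mathfrak{f}=0$ and $9h_{00}+6h_{11}h_{10}+\frac{2}{3}h_{11}^3=0$. As $H_{cr}\neq0$, the first equality forces $\mathfrak{f}=0$. Substituting $\mathfrak{f}=0$ and the vanishing cubic combination into the Euler--Lagrange expression \eqref{EE_for_E1}, each of its three terms vanishes identically, so $\mathcal{E}_1=0$.

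For the non-minimizing assertion I would invoke the fact that the torsion vanishes on $B^1\times\mathbb{R}$ (Section~\ref{section2}), so the integrand of $dA_1$ in \eqref{0.4} is exactly $|H_{cr}|^{3/2}\,\theta\wedge e^1$. Since $H_{cr}=-\frac{1-r^2}{8r^2}\neq0$ throughout $0<r<1$, this density is strictly positive and hence $E_1(\Sigma)>0$. By Lemma~\ref{lem3.1} the functional $E_1$ attains the value $0$, which is its infimum because the integrand is nonnegative; a surface of strictly positive energy therefore cannot minimize $E_1$.

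I expect no serious obstacle here: every local quantity has already been computed in Example~3 and specialized in Example~3.1, and the single conceptual step is the observation that $V$ annihilates radial functions, immediate from \eqref{3.21}. The remaining work is merely the bookkeeping of feeding $\mathfrak{f}=0$ and the vanishing cubic term into \eqref{EE_for_E1}.
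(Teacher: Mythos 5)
Your proposal is correct and follows essentially the same route as the paper: it uses the computed data $\alpha=\frac{1}{2r}$, $H=0$, $e_1(\alpha)=-\frac{1-r^2}{4r^2}$, the reduction of $\mathfrak{f}$ and the cubic term to $V$-derivatives of radial functions, and the observation from \eqref{3.21} that $V=T+\alpha e_2$ annihilates radial functions, to conclude $\mathcal{E}_1=0$. Your explicit justification of the non-minimizing claim (positivity of $|H_{cr}|^{3/2}$ versus the zero infimum attained in Lemma \ref{lem3.1}) is exactly the argument the paper leaves implicit.
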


\section{Examples of critical points of $E_2$
in the disk bundle}\label{section4}

In this section, we look for critical points of 
the area functional $E_2$ in the disk bundle. 

\subsection{Example 1.}
Revisiting the Example 1 in Section \ref{section3}, we 
consider the surface $\Sigma$ in $B^1\times\mathbb{R}$ 
defined by $ax+by=c$ for some real constants $a$, $b$, and $c$
such that $a^2+b^2\neq 0$. 
From  (\ref{3.0}) and (\ref{3.3}), we have
\begin{equation}\label{4.1}
\alpha=0~~\mbox{ and }~~H=-\frac{c}{\sqrt{a^2+b^2}}.
\end{equation}
Combining this with (\ref{2.12}) and (\ref{2.15}), 
we find 
\begin{equation*}
\begin{split}
\frac{9}{4}\mathcal{E}_2
&=\frac{1}{3}H^4+\frac{3}{2}W\left(\frac{2}{3}H^2+\frac{1}{2}W\right)\\
&=\frac{1}{3}H^4-\frac{1}{2}H^2+\frac{3}{16}
=\frac{1}{3}\left(H^2-\frac{3}{4}\right)^2
=\frac{1}{3}\left(\frac{c^2}{a^2+b^2}-\frac{3}{4}\right)^2.
\end{split}
\end{equation*}
Therefore, if 
$\displaystyle\frac{c^2}{a^2+b^2}=\frac{3}{4},$
then $\mathcal{E}_2=0$. 
Not that this is the same condition given in (\ref{3.A}). 
Hence, we have the following: 

\begin{lem}\label{lem4.1}
Suppose $\Sigma$ is a surface in $B^1\times\mathbb{R}$ 
defined by $ax+by=c$ for some real constants $a$, $b$, and $c$
satisfying \eqref{3.A}. 
Then $\Sigma$ is a critical point of $E_2$. 
In fact, $\Sigma$ is a critical point of both $E_1$ and $E_2$. 
\end{lem}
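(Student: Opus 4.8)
The plan is to specialize the Euler--Lagrange functional $\mathcal{E}_2$ from (\ref{EE_for_E2}) to the plane $\Sigma=\{ax+by=c\}$, exploiting the fact that this surface carries unusually simple geometric data. Since $B^1\times\mathbb{R}$ has constant Webster curvature $W=-\tfrac12$ and vanishing torsion by (\ref{2.12}) and (\ref{2.15}), the formula (\ref{EE_for_E2}) applies here. From the analysis of Example~1 in Section \ref{section3}, recorded in (\ref{3.0}) and (\ref{3.3}), the derivation function vanishes, $\alpha\equiv 0$, and the $p$-mean curvature $H=-c/\sqrt{a^2+b^2}$ is constant along $\Sigma$. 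These two facts are the entire engine of the proof.

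First I would substitute $\alpha\equiv 0$ into (\ref{EE_for_E2}). This immediately kills every term carrying a factor of $\alpha$ or $e_1(\alpha)$, namely $3e_1(\alpha)^2$, $12\alpha^2 e_1(\alpha)$, $12\alpha^4$, $-\alpha H e_1(H)$, $2H^2 e_1(\alpha)$ and $5\alpha^2 H^2$, and collapses the last line to $\tfrac32 W\bigl(\tfrac23 H^2+\tfrac12 W\bigr)$. Next I would use that $H$ is constant, so $e_1(H)=V(H)=e_1 e_1(H)=0$, which eliminates the remaining derivative terms $He_1e_1(H)$, $3e_1 V(H)$ and $e_1(H)^2$ on the first line. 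What survives is precisely
\[
\frac{9}{4}\mathcal{E}_2=\frac{1}{3}H^4+\frac{3}{2}W\Bigl(\frac{2}{3}H^2+\frac{1}{2}W\Bigr).
\]

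The key step is then to recognize the right-hand side as a perfect square. Inserting $W=-\tfrac12$ and expanding gives $\tfrac13 H^4-\tfrac12 H^2+\tfrac{3}{16}=\tfrac13\bigl(H^2-\tfrac34\bigr)^2$; substituting $H^2=c^2/(a^2+b^2)$ from (\ref{3.3}) rewrites this as $\tfrac13\bigl(\tfrac{c^2}{a^2+b^2}-\tfrac34\bigr)^2$. Under hypothesis (\ref{3.A}) the inner bracket vanishes, whence $\mathcal{E}_2=0$ and $\Sigma$ is a critical point of $E_2$. For the final sentence of the statement I would simply invoke Lemma \ref{lem3.1}, which already shows that under the very same condition (\ref{3.A}) the surface $\Sigma$ is a zero-energy minimizer, and in particular a critical point, of $E_1$; combining this with the $E_2$ computation gives the claim for both functionals.

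I do not expect a genuine obstacle here. The argument is a direct substitution, and the only point needing care is the bookkeeping that confirms all $\alpha$-dependent and all $H$-derivative terms drop out, leaving the clean quartic in $H$ whose factorization as a perfect square makes the role of condition (\ref{3.A}) transparent. The mild conceptual prerequisite is the observation, already made in Section \ref{section2}, that the constant-curvature, zero-torsion hypotheses behind (\ref{EE_for_E2}) are indeed satisfied on $B^1\times\mathbb{R}$, so that the Cheng--Yang--Zhang Euler--Lagrange equation is legitimately available.
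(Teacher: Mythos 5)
Your proposal is correct and follows essentially the same route as the paper: substitute $\alpha\equiv 0$ and the constancy of $H$ from Example~1 into \eqref{EE_for_E2}, reduce to $\tfrac{9}{4}\mathcal{E}_2=\tfrac13 H^4+\tfrac32 W\bigl(\tfrac23 H^2+\tfrac12 W\bigr)=\tfrac13\bigl(H^2-\tfrac34\bigr)^2$ using $W=-\tfrac12$, and conclude via condition \eqref{3.A}, with the $E_1$ statement imported from Lemma~\ref{lem3.1}. No gaps.
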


\subsection{Example 2.}
Revisiting the Example 2.1 in Section \ref{section3}, we 
consider the surface $\Sigma$ in $B^1\times\mathbb{R}$ 
defined by $f(r)=c$ for some constant $c$, 
where $f=f(r)$ depends only on $r=\sqrt{x^2+y^2}$ such that $f'(r)>0$. 
By (\ref{A3.6}) and (\ref{A3.9}), 
we find 
\begin{equation}\label{4.2} 
e_1=\frac{y\overset{\circ}{e}_1-x\overset{\circ}{e}_2}{r}. 
\end{equation}
Recall that it follows from (\ref{A3.4}) and (\ref{A3.10}) 
that 
\begin{equation}\label{4.3}
\alpha=0~~\mbox{ and }~~
H=-\frac{1-r^2}{2}\frac{f'}{r}-r. 
\end{equation}
Hence, by (\ref{1.2}), (\ref{2.5}), (\ref{4.2}) and (\ref{4.3}) that 
\begin{equation}\label{4.4}
e_1(H)=0~~\mbox{ and }~~V(H)=T(H)=0.
\end{equation}
Combining (\ref{2.15}), (\ref{4.3}) and (\ref{4.4}), 
we see from (\ref{EE_for_E2}) that 
$\mathcal{E}_2=0$ if and only if 
\begin{equation}\label{4.5}
\begin{split}
0&=\frac{1}{3}H^4+\frac{3}{2}W\left(\frac{2}{3}H^2+\frac{1}{2}W\right)\\
&=\frac{1}{3}H^4-\frac{1}{2}H^2+\frac{3}{16}
=\frac{1}{3}\left(H^2-\frac{3}{4}\right)^2,
\end{split}
\end{equation}
which is equivalent to (\ref{A3.11}). 
By the same argument 
in solving (\ref{A3.11}), 
we see that 
$\Sigma$ defined by 
$r^2=(1-\sqrt{2-\sqrt{3}})^2$ is a critical point for the energy $E_2$.
Hence, we have the following: 

\begin{lem}\label{lem4.2}
The surface 
$\Sigma$ in $B^1\times\mathbb{R}$ defined by 
$x^2+y^2=(1-\sqrt{2-\sqrt{3}})^2$ is a critical point of $E_2$. 
In fact, $\Sigma$ is a critical point of both $E_1$ and $E_2$. 
\end{lem}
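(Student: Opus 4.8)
The plan is to verify directly that the rotationally symmetric surface $x^2+y^2=(1-\sqrt{2-\sqrt{3}})^2$ satisfies $\mathcal{E}_2=0$, exploiting the fact that this is precisely the surface already analyzed in Example~2 of section~\ref{section3}. The key observation is that, since $f=f(r)$ depends only on the radial variable $r=\sqrt{x^2+y^2}$, the entire geometry of $\Sigma$ is invariant under the rotation generated by $-y\partial_x+x\partial_y$. I would first record the simplifications \eqref{4.2}, \eqref{4.3}, and \eqref{4.4} already established in the excerpt: namely $e_1$ is (up to sign) the unit angular vector field, the derivation function $\alpha$ vanishes identically, the $p$-mean curvature $H$ depends only on $r$, and crucially $e_1(H)=0$ and $V(H)=T(H)=0$. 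These vanishing derivatives collapse the elaborate expression \eqref{EE_for_E2} dramatically.

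The central computation is then to substitute $\alpha\equiv 0$, $W=-\tfrac12$ (from \eqref{2.15}), $e_1(H)=0$, and $V(H)=0$ into the formula $\tfrac94\mathcal{E}_2$ in \eqref{EE_for_E2}. Every term containing a factor of $\alpha$, $e_1(H)$, $e_1(\alpha)$, $V(H)$, or $e_1e_1(H)$ drops out, leaving only the purely algebraic residue
\begin{equation*}
\frac{9}{4}\mathcal{E}_2=\frac{1}{3}H^4+\frac{3}{2}W\left(\frac{2}{3}H^2+\frac{1}{2}W\right).
\end{equation*}
Inserting $W=-\tfrac12$ and completing the square exactly as in \eqref{4.5} reduces this to $\tfrac13\left(H^2-\tfrac34\right)^2$, so that $\mathcal{E}_2=0$ is equivalent to $H^2=\tfrac34$. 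Using the expression for $H$ in \eqref{4.3}, this is exactly the condition \eqref{A3.11} that was already solved in Example~2.1.

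To finish, I would invoke the solution of \eqref{A3.11} carried out in section~\ref{section3}: choosing $f(r)=ar^2$ with the normalization $a=\tfrac12$ produces the radius $r^2=(1-\sqrt{2-\sqrt{3}})^2$ lying in the admissible range $0\le r^2<1$, at which $H^2=\tfrac34$ holds. Since this same surface was shown in Lemma~\ref{lem3.3} to be a critical point (indeed a minimizer) of $E_1$, the concluding sentence ``$\Sigma$ is a critical point of both $E_1$ and $E_2$'' follows immediately. I do not anticipate any genuine obstacle here: the only point requiring care is the bookkeeping of which terms in \eqref{EE_for_E2} survive under the hypotheses $\alpha\equiv 0$ and $e_1(H)=V(H)=0$, and confirming that the surviving piece matches \eqref{4.5}. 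The essential content was already done in proving Lemma~4.1 for the planar case; the present lemma is the radial analogue, and the rotational symmetry is what guarantees the vanishing derivatives that make the reduction work.
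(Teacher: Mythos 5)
Your proposal is correct and follows essentially the same route as the paper: it uses the simplifications $\alpha\equiv 0$, $e_1(H)=V(H)=0$ from Example 2.1 to collapse \eqref{EE_for_E2} to $\frac{1}{3}\left(H^2-\frac{3}{4}\right)^2$, identifies the resulting condition with \eqref{A3.11}, and reuses its solution from section \ref{section3}. No gaps.
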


\section{Euler-Lagrange equation for $E_1$ in the general case}\label{section5}

In this section, we derive the Euler-Lagrange equation for the
invariant surface area functional $E_1$ in the general case, 
for which $(M,J,\theta)$ does not necessarily have
constant Webster curvature or vanishing torsion. 

To this end, recall that the Euler-Lagrange equation for $E_1$ reads
$$\mathcal{E}_1=0,$$
where (see (3.5) in \cite{CYZ})
\begin{equation}\label{5.1}
\begin{split}
\mathcal{E}_1&=\frac{1}{2}e_1(|H_{cr}|^{1/2}\mathfrak{f})+\frac{3}{2}|H_{cr}|^{1/2}\alpha\mathfrak{f}\\
&\hspace{4mm}+\frac{1}{2}sign(H_{cr})|H_{cr}|^{1/2}\left(9h_{00}+6h_{11}h_{10}+\frac{2}{3}h_{11}^3\right).
\end{split}
\end{equation}
Once again, we emphasize that we do not assume that  $(M,J,\theta)$ has
constant Webster curvature or vanishing torsion. 
We are going to find each terms on the right hand side of (\ref{5.1}). 
From (3.2) in \cite{CYZ}, we have
\begin{equation}\label{5.2}
H_{cr}=e_1(\alpha)+\frac{1}{2}\alpha^2-\mbox{Im} A_{11}+\frac{1}{4}W+\frac{1}{6}H^2. 
\end{equation}
From (3.4) in \cite{CYZ}, we have
\begin{equation}\label{5.3}
\begin{split}
|H_{cr}|\mathfrak{f}
=h_{10}h_{111}+\frac{1}{3}h_{11}^2h_{111}+h_{11}h_{110}+\frac{3}{2}h_{100}. 
\end{split}
\end{equation}
To find the terms on the right hand side of (\ref{5.3}), we 
(c.f. (3.3) in \cite{CYZ})
\begin{equation}\label{5.4}
\begin{split}
dh_{11}+3\widetilde{\phi}^1_r-h_{11}\widetilde{\phi}^1_{1r}&=h_{111}\omega^1+h_{110}\theta,\\
dh_{10}-h_{11}\widetilde{\phi}^1_r-h_{10}\widetilde{\phi}^1_r&=h_{101}\omega^1+h_{100}\theta
\end{split}
\end{equation}
with $h_{110}=h_{101}$. Here, 
 \begin{equation}\label{5.5}
\begin{split}
   h_{11}&=H,\\
   h_{10}&=e_1(\alpha)+\frac{1}{2}\alpha^2-\mbox{Im} A_{11}+\frac{1}{4}W,
\end{split}
\end{equation}
by (2.15) in \cite{CYZ}, and
\begin{equation}\label{5.6}
\widetilde{\phi}^1_{1r}=-\alpha e^1
\end{equation}
by (2.18) in \cite{CYZ}, and 
\begin{equation}\label{5.7}
\begin{split}
\widetilde{\phi}^1_r
&=(\mbox{Re} A^1_{\overline{1}}-\alpha H) e^1+\left(\mbox{Im} A^1_{\overline{1}}-\frac{1}{4}W+\frac{3}{2}\alpha^2\right)e^2\\
&\hspace{4mm}+\left[\mbox{Re}\Big(\frac{1}{6}W^{,1}+\frac{2i}{3}(A^{11})_{,1}\Big)-\alpha\Big(\omega(T+\alpha e_2)+\frac{1}{4}W\Big)-\frac{1}{2}\alpha^3\right]\theta\\
&=(\mbox{Re} A^1_{\overline{1}}-\alpha H) e^1\\
&\hspace{4mm}+\left[
\alpha \mbox{Im} A^1_{\overline{1}}+\alpha^3+
\mbox{Re}\Big(\frac{1}{6}W^{,1}+\frac{2i}{3}(A^{11})_{,1}\Big)
-\frac{1}{2}\alpha W
-\alpha \omega(T+\alpha e_2)  \right]\theta\\
&=(\mbox{Re} A^1_{\overline{1}}-\alpha H) e^1\\
&\hspace{4mm}+
\Bigg[\alpha \mbox{Im} A^1_{\overline{1}}+\alpha^3+
\mbox{Re}\Big(\frac{1}{6}W^{,1}+\frac{2i}{3}(A^{11})_{,1}\Big)
-\frac{1}{2}\alpha W-\alpha e_1(\alpha)+\alpha \mbox{Im} A_{11}\Bigg]\theta,
\end{split}
\end{equation}
where the first equality follows from (3.8) in \cite{CYZ}, 
the second equality follows from the fact that $e^2=\alpha\theta$, i.e. (2.14) in \cite{CYZ},
and the last equality follows from the fact that 
$$\omega(T+\alpha e_2)=e_1(\alpha)+2\alpha^2-\mbox{Im} A_{11},$$
i.e. (2.16) in \cite{CYZ}.  
Putting (\ref{5.5})-(\ref{5.7}) into (\ref{5.4}), we find 
\begin{equation}\label{5.8}
\begin{split}
h_{111}&=e_1(H)-2\alpha H+3 \mbox{Re} A^1_{\overline{1}},\\
h_{110}&=(T+\alpha e_2)(H)-3\alpha e_1(\alpha)-3\alpha^3-\frac{3}{2}\alpha W\\
&\hspace{4mm}+3\alpha \mbox{Im} A^1_{\overline{1}}+3\alpha \mbox{Im} A_{11}
+3
\mbox{Re}\Big(\frac{1}{6}W^{,1}+\frac{2i}{3}(A^{11})_{,1}\Big),\\
h_{100}&=(T+\alpha e_2)\left(e_1(\alpha)+\frac{1}{2}\alpha^2-\mbox{Im} A_{11}+\frac{1}{4}W\right)+\alpha H e_1(\alpha)+\alpha^3 H\\
&\hspace{4mm}+\frac{1}{2}\alpha HW-\alpha H \mbox{Im} A^1_{\overline{1}}-\alpha H \mbox{Im}A_{11}
-H\mbox{Re}\Big(\frac{1}{6}W^{,1}+\frac{2i}{3}(A^{11})_{,1}\Big).
\end{split}
\end{equation}
Substituting  (\ref{5.5}) and (\ref{5.8}) into (\ref{5.3}) yields
\begin{equation}\label{5.9}
\begin{split}
|H_{cr}|\mathfrak{f}
&=\left(e_1(\alpha)+\frac{1}{2}\alpha^2+\frac{1}{3}H^2-\mbox{Im} A_{11}+\frac{1}{4}W\right)\Big(e_1(H)-2\alpha H+3 \mbox{Re} A^1_{\overline{1}}\Big)\\
&\hspace{4mm}
+H(T+\alpha e_2)(H)-3\alpha He_1(\alpha)-3\alpha^3H-\frac{3}{2}\alpha HW\\
&\hspace{4mm}+3\alpha H \mbox{Im} A^1_{\overline{1}}+3\alpha H \mbox{Im} A_{11}
+3H
\mbox{Re}\Big(\frac{1}{6}W^{,1}+\frac{2i}{3}(A^{11})_{,1}\Big)\\
&\hspace{4mm}
+\frac{3}{2}(T+\alpha e_2)\left(e_1(\alpha)+\frac{1}{2}\alpha^2-\mbox{Im} A_{11}+\frac{1}{4}W\right)+\frac{3}{2}\alpha H e_1(\alpha)+\frac{3}{2}\alpha^3 H\\
&\hspace{4mm}+\frac{3}{4}\alpha HW-\frac{3}{2}\alpha H \mbox{Im} A^1_{\overline{1}}-\frac{3}{2}\alpha H \mbox{Im}A_{11}
-\frac{3}{2}H\mbox{Re}\Big(\frac{1}{6}W^{,1}+\frac{2i}{3}(A^{11})_{,1}\Big).
\end{split}
\end{equation}
On the other hand, it follows from (2.22) in \cite{CYZ} that 
$$h_{00}=(T+\alpha e_2)(\alpha)
+\mbox{Im}\Big(\frac{1}{6}W^{,1}+\frac{2i}{3}(A^{11})_{,1}\Big)-\alpha \mbox{Re} A^1_{\overline{1}}.$$
Combining this with (\ref{5.5}) yields 
\begin{equation}\label{5.10}
\begin{split}
&9h_{00}+6h_{11}h_{10}+\frac{2}{3}h_{11}^3\\
&=9(T+\alpha e_2)(\alpha)
+9\mbox{Im}\Big(\frac{1}{6}W^{,1}+\frac{2i}{3}(A^{11})_{,1}\Big)-9\alpha \mbox{Re} A^1_{\overline{1}}\\
&\hspace{4mm}+6He_1(\alpha)+3H\alpha^2-6H\mbox{Im} A_{11}+\frac{3}{2}HW+\frac{2}{3}H^3.
\end{split}
\end{equation}

To conclude, we have proved the following: 
\begin{theorem}\label{thm5.1}
Let $\Sigma$ be a smooth surface in a $3$-dimensional 
pseudohermitian manifold $(M,J,\theta)$. 
Suppose $\Sigma$ is nonsingular and $H_{cr}\neq 0$ on $\Sigma$. 
Then $\Sigma$ satisfies the Euler-Lagrange equation 
for the energy functional $E_1$
if and only if $\mathcal{E}_1=0$, 
where $\mathcal{E}_1$ is given as in 
\eqref{5.1}, 
and 
$H_{cr}$, $\mathfrak{f}$
and $9h_{00}+6h_{11}h_{10}+\frac{2}{3}h_{11}^3$ 
are given as in \eqref{5.2}, \eqref{5.9}
\eqref{5.10} respectively. 
\end{theorem}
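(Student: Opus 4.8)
The plan is to recognize that the abstract form of the Euler--Lagrange equation for $E_1$ carries over verbatim to the general setting, so that the only genuine work lies in expressing the pseudohermitian jet quantities explicitly in terms of the geometric data $\alpha$, $H$, $W$, and the torsion $A_{11}$, $A^1_{\overline{1}}$, $A^{11}$. Indeed, the variational computation of Cheng, Yang, and Zhang that produces \eqref{5.1}, the formula \eqref{5.2} for $H_{cr}$, and the identity \eqref{5.3} for $|H_{cr}|\mathfrak{f}$ is purely formal and never uses the hypotheses of constant Webster curvature or vanishing torsion; those hypotheses enter only at the final stage, when the coefficients $h_{111}$, $h_{110}$, $h_{100}$, $h_{00}$ are evaluated. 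Thus I would begin by quoting \eqref{5.1}, \eqref{5.2}, and \eqref{5.3} as the starting point, valid in full generality.

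The heart of the proof is the system of structure equations \eqref{5.4}, which governs the variation of $h_{11}=H$ and $h_{10}$ along $\Sigma$ and encodes the action of the Tanaka--Webster connection. The idea is to substitute the known expressions \eqref{5.5} for $h_{11}$, $h_{10}$, the connection form \eqref{5.6} for $\widetilde{\phi}^1_{1r}$, and the vertical form \eqref{5.7} for $\widetilde{\phi}^1_r$ into \eqref{5.4}. Because $\widetilde{\phi}^1_r$ and $\widetilde{\phi}^1_{1r}$ are given as linear combinations of the coframe $e^1$, $e^2$, $\theta$, after using the relation $e^2=\alpha\theta$ on $\Sigma$ the left-hand sides of \eqref{5.4} become $1$-forms in $e^1$ and $\theta$ alone; comparing with the right-hand sides $h_{111}\omega^1+h_{110}\theta$ and $h_{101}\omega^1+h_{100}\theta$ (with $\omega^1=e^1$ on $\Sigma$) reads off $h_{111}$, $h_{110}$, $h_{100}$ exactly as recorded in \eqref{5.8}. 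Substituting \eqref{5.5} and \eqref{5.8} into \eqref{5.3} and collecting terms then yields \eqref{5.9}. For the remaining piece I would take the expression for $h_{00}$ from (2.22) in \cite{CYZ}, which already retains the full torsion and curvature contributions, and combine it with \eqref{5.5}; the algebra is immediate and produces \eqref{5.10}. Assembling \eqref{5.1}, \eqref{5.2}, \eqref{5.9}, and \eqref{5.10} gives the theorem.

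I expect the main obstacle to be the careful bookkeeping of the torsion terms in the $\theta$-component of \eqref{5.7}. The raw coefficient there contains $\alpha\big(\omega(T+\alpha e_2)+\tfrac14 W\big)+\tfrac12\alpha^3$, and one must rewrite $\omega(T+\alpha e_2)$ via the identity $\omega(T+\alpha e_2)=e_1(\alpha)+2\alpha^2-\mbox{Im}\,A_{11}$ from (2.16) in \cite{CYZ}; this is precisely where the contributions $\mbox{Im}\,A_{11}$, $\mbox{Im}\,A^1_{\overline{1}}$, and $\mbox{Re}\big(\tfrac16 W^{,1}+\tfrac{2i}{3}(A^{11})_{,1}\big)$ re-enter and where sign and coefficient errors are most likely, since these terms all vanish in the special case treated in \cite{CYZ} and so cannot be cross-checked against their formulas except in the limit. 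As a consistency check I would set $A_{11}=A^1_{\overline{1}}=0$ and $W$ constant (so $W^{,1}=0$ and $(A^{11})_{,1}=0$) and verify that \eqref{5.9} collapses to \eqref{0.1} and \eqref{5.10} to \eqref{0.2}, which pins down all coefficients unambiguously.
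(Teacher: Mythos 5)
Your proposal follows essentially the same route as the paper: quote the variational identities (3.5), (3.2), (3.4) of \cite{CYZ} as \eqref{5.1}--\eqref{5.3}, substitute \eqref{5.5}--\eqref{5.7} into the structure equations \eqref{5.4} (using $e^2=\alpha\theta$ and $\omega(T+\alpha e_2)=e_1(\alpha)+2\alpha^2-\mathrm{Im}\,A_{11}$) to read off $h_{111}$, $h_{110}$, $h_{100}$, and then assemble \eqref{5.9} and \eqref{5.10} via (2.22) of \cite{CYZ}. This matches the paper's proof step for step, and your proposed consistency check against \eqref{0.1}--\eqref{0.2} in the torsion-free constant-curvature case is a sensible addition.
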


\section{Euler-Lagrange equation for $E_2$ in the general case}\label{section6}

In this section, we are going to derive the Euler-Lagrange equation 
for the functional $E_2$ when the pseudohermitian $3$-manifold $(M,J,\theta)$
has constant Webster scalar curvature and 
its torsion is constant and purely imaginary. That is to say, we have 
\begin{equation}\label{7.1}
W\equiv c_1~~\mbox{ and }A_{11}=ic_2
\end{equation}
for some real constants $c_1$ and $c_2$. 
We will assume that (\ref{7.1}) holds
throughout this section.

Note that it follows from (2.13) and (2.14) in \cite{CYZ}  that 
\begin{equation}\label{7.2}
\omega(e_1)=H
\end{equation}
and 
\begin{equation}\label{7.3}
e^2=\alpha\theta
\end{equation}
on $\Sigma$. Here, $\omega$ is real $1$-form such that 
$\omega_1^1=i\omega$, where $\omega_1^1$ is the connection form
of $(M,J,\theta)$.

Writing 
\begin{equation}\label{7.4}
A_{11}=A_1^{\overline{1}}=a_1+ia_2~~
\mbox{ and }~~
A_{\overline{1}\,\overline{1}}=A_{\overline{1}}^1=\overline{A_{11}}=a_1-ia_2,
\end{equation}
for some real-valued functions $a_1$ and $a_2$. 
Then we have (c.f. (7.5) in \cite{CYZ})
\begin{equation}\label{7.6}
\begin{split}
de^1 &=-e^2\wedge\omega+\theta\wedge(a_1e^1-a_2e^2), \\
de^2 &=e^1\wedge\omega-\theta\wedge(a_1e^1+a_2e^2). 
\end{split}
\end{equation}
This together with the assumption (\ref{7.1}) implies that 
\begin{equation}\label{7.5}
\begin{split}
de^1 &=-e^2\wedge\omega-c_2\theta\wedge e^2, \\
de^2 &=e^1\wedge\omega-c_2\theta\wedge e^2. 
\end{split}
\end{equation}

It follows from (\ref{7.2}), (\ref{7.3}) and (\ref{7.5}) that
\begin{equation}\label{7.7}
\begin{split}
d(\alpha e^1)
&=d\alpha\wedge e^1+\alpha de^1\\
&=[(T\alpha)\theta+e_2(\alpha)e^2]\wedge e^1-\alpha e^2\wedge \omega -\alpha c_2\theta\wedge e^2\\
&=[(T+\alpha e_2)(\alpha)-\alpha^2H]\theta\wedge e^1
\end{split}
\end{equation}
on $\Sigma$. 
From (\ref{0.3})
and (\ref{7.1}), 
we compute 
\begin{equation}\label{7.0}
\begin{split}
dA_2 &=\Bigg[(T+\alpha e_2)(\alpha)+\frac{2}{3}\left(e_1(\alpha)+\frac{1}{2}\alpha^2-\mbox{Im} A_{11}+\frac{1}{4}W\right)H+\frac{2}{27}H^3\\
&\hspace{8mm}+\mbox{Im}\left(\frac{1}{6}W^{,1}+\frac{2i}{3}(A^{11})_{,1}\right)-\alpha\left(\mbox{Re} A^1_{\overline{1}}\right)\Bigg]\theta\wedge e^1\\
&=\Bigg[(T+\alpha e_2)(\alpha)+\frac{2}{3}\left(e_1(\alpha)+\frac{1}{2}\alpha^2-\mbox{Im} A_{11}+\frac{1}{4}W\right)H+\frac{2}{27}H^3\Bigg]\theta\wedge e^1\\
&=\left[\frac{2}{3}e_1(\alpha)+\frac{4}{3}\alpha^2-\frac{2}{3}\mbox{Im} A_{11}
+\frac{1}{6}W+\frac{2}{27}H^2\right]H\theta\wedge e^1+d(\alpha e^1).
\end{split}
\end{equation}
Hence, we are reduce to computing the Euler-Lagrange equation of 
$$\int_\Sigma \left[\frac{2}{3}e_1(\alpha)+\frac{4}{3}\alpha^2-\frac{2}{3}\mbox{Im} A_{11}
+\frac{1}{6}W+\frac{2}{27}H^2\right]H\theta\wedge e^1.$$

Let 
$$\Sigma_t=F_t(\Sigma)$$
be a family of immersions such that 
\begin{equation}\label{7.8}
\frac{d}{dt}F_t=X=fe_2+gT.
\end{equation}
Here, we let $e_1$ be the unit vector in $T\Sigma_t\cap\xi$
and $e_1=Je_1$. 
We assume that 
$f$ and $g$ are supported in a domain of $\Sigma$ away from the singular set of $\Sigma$. 
We have the following:

\begin{lem}\label{lem7.1}
Let $h:=f-\alpha g$ and $V:=T+\alpha e_2$. 
Then we have 
\begin{eqnarray}
\label{7.10}\omega(e_2)&=&h^{-1}e_1(h)+2\alpha,\\
\label{7.11}\omega(T)&=&e_1(\alpha)-\alpha h^{-1}e_1(h)-\mbox{\emph{Im}}A_{11},\\
\label{7.12}e_2(\alpha)&=&h^{-1}V(h),
\end{eqnarray}  
and 
\begin{equation}\label{7.13}
e_2(H)=2W+4e_1(\alpha)+H^2+4\alpha^2+h^{-1}e_1e_1(h)
+2\alpha h^{-1}e_1(h)-2\mbox{\emph{Im}}A_{11}.
\end{equation}
\end{lem}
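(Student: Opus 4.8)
The plan is to prove the four identities in the order (\ref{7.10}), (\ref{7.11}), (\ref{7.12}), (\ref{7.13}), working in the ambient frame $\{e_1,e_2,T\}$ and writing the real connection form as $\omega=He^1+\omega(e_2)e^2+\omega(T)\theta$, where I have used $\omega(e_1)=H$ from (\ref{7.2}). The first reduction I would exploit is that (\ref{7.11}) is essentially free once (\ref{7.10}) is known: subtracting $\alpha\,\omega(e_2)$ from the relation $\omega(T+\alpha e_2)=e_1(\alpha)+2\alpha^2-\mbox{Im}A_{11}$ (namely (2.16) of \cite{CYZ}, already invoked in (\ref{5.7})) and inserting (\ref{7.10}) yields (\ref{7.11}) at once. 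So the genuine content is (\ref{7.10}), (\ref{7.12}) and (\ref{7.13}).

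For (\ref{7.10}) and (\ref{7.12}) the key structural fact is that the flow $F_t$ carries leaves to leaves, so the tangent distribution $\mathcal{D}=T\Sigma_t=\mbox{span}\{e_1,V\}$ is invariant under the flow generated by $X$; hence $[X,Y]\in\Gamma(\mathcal{D})$ for every section $Y$ of $\mathcal{D}$. Since $\mathcal{D}=\mbox{span}\{e_1,\,T+\alpha e_2\}$, membership in $\mathcal{D}$ is exactly the condition that the $e_2$-component of a vector equal $\alpha$ times its $T$-component. I would apply this to $Y=e_1$ and $Y=V$, writing $X=fe_2+gT=he_2+gV$ with $h=f-\alpha g$, and expand the brackets using the Tanaka--Webster connection ($\nabla_Ue_1=\omega(U)e_2$, $\nabla_Ue_2=-\omega(U)e_1$, $\nabla_UT=0$), the contact relation
\[ [e_1,e_2]=-He_1-\omega(e_2)e_2-2T, \]
which follows from $d\theta(e_1,e_2)=2$, and the pseudohermitian torsion, which under the hypothesis (\ref{7.1}) enters only through $\mbox{Im}A_{11}$ in the off-diagonal slots dictated by (\ref{7.5})--(\ref{7.6}). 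Imposing the tangency condition on $[X,e_1]$ and simplifying, the terms proportional to $g$ cancel precisely because of (2.16) of \cite{CYZ}, leaving $e_1(h)=h(\omega(e_2)-2\alpha)$, which is (\ref{7.10}); the same tangency condition applied to $[X,V]$ yields, after the $g$-terms again cancel, $h\,e_2(\alpha)=V(h)$, which is (\ref{7.12}).

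For (\ref{7.13}), which is second order, I would read off $e_2(H)=e_2(\omega(e_1))$ from the curvature structure equation for $\omega$. The Webster curvature enters through $d\omega=-2W\,e^1\wedge e^2+(\mbox{terms containing }\theta)$, the normalization being fixed by $\omega_1^1=i\omega$ together with the curvature equation of \cite{Lee} (this is consistent with (\ref{2.15}) and the disk-bundle computation of Section~\ref{section2}, where $d\omega=e^1\wedge e^2$ and $W=-\tfrac12$). Evaluating
\[ d\omega(e_1,e_2)=e_1(\omega(e_2))-e_2(\omega(e_1))-\omega([e_1,e_2]) \]
and substituting the bracket above gives $e_2(H)=e_1(\omega(e_2))+\omega(e_2)^2+2\omega(T)+H^2+2W$. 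The decisive simplification is then to insert (\ref{7.10}) for $\omega(e_2)$ and (\ref{7.11}) for $\omega(T)$: the cross term $-h^{-2}e_1(h)^2$ coming from $e_1(\omega(e_2))$ is cancelled exactly by the $h^{-2}e_1(h)^2$ in $\omega(e_2)^2$, and the remaining terms collect into $2W+4e_1(\alpha)+H^2+4\alpha^2+h^{-1}e_1e_1(h)+2\alpha h^{-1}e_1(h)-2\mbox{Im}A_{11}$, which is (\ref{7.13}).

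The main obstacle I anticipate is bookkeeping rather than conceptual: pinning down the exact signs and normalizations of the pseudohermitian torsion in the brackets $[e_1,e_2]$, $[T,e_1]$ and $[e_2,T]$, and of the curvature coefficient in $d\omega$, so that the $g$-dependence in the tangency conditions cancels cleanly and the numerical constants $2W$, $H^2$, $4\alpha^2$, $4e_1(\alpha)$ and $-2\mbox{Im}A_{11}$ emerge exactly. These are precisely the places where the hypothesis (\ref{7.1}) of constant Webster curvature and constant, purely imaginary torsion is used to keep the torsion contributions tractable, and I would check each constant against the disk-bundle model of Section~\ref{section2} (where the torsion vanishes and $W=-\tfrac12$) as a consistency test before assembling the four identities.
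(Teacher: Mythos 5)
Your proposal is correct, and for the first three identities it takes a genuinely different (though dual) route to the paper's. The paper encodes the foliation through a defining function $t$ with $e_1(t)=0$, $e_2(t)=h^{-1}$, $T(t)=-\alpha h^{-1}$, and obtains \eqref{7.10}, \eqref{7.11}, \eqref{7.12} by applying the single commutators $[e_1,e_2]$, $[T,e_1]$, $[T,e_2]$ from \eqref{7.9} to $t$; each identity then drops out of a two-term computation, and in particular \eqref{7.11} is derived independently rather than deduced from (2.16) of \cite{CYZ}. You instead encode the foliation through flow-invariance of the tangent distribution $\mathrm{span}\{e_1,V\}$ and impose tangency of $[X,e_1]$ and $[X,V]$; I checked that expanding these brackets with \eqref{7.9} does give $e_1(h)=h(\omega(e_2)-2\alpha)$ and $h\,e_2(\alpha)=V(h)$, with the $g$-terms in the first case cancelling exactly via $\omega(T)+\alpha\omega(e_2)=e_1(\alpha)+2\alpha^2-\mbox{Im}A_{11}$, as you predict. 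Your route is more invariant and makes the role of $h=f-\alpha g$ transparent, but it costs you the full three-component expansion of each bracket and an appeal to (2.16) of \cite{CYZ} as an external input (legitimate here, since the paper itself invokes it in \eqref{5.7} and in Lemma \ref{lem7.2}, and since (2.16) is exactly the combination of \eqref{7.10} and \eqref{7.11}, so no circularity arises); the paper's dual computation is shorter because only the $t$-component of each bracket is ever needed. Your derivation of \eqref{7.13} from $d\omega(e_1,e_2)=-2W$ and the cancellation of the $h^{-2}e_1(h)^2$ terms coincides with the paper's.
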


To prove Lemma \ref{lem7.1}, we need the following formulas: 
\begin{equation}\label{7.9}
\begin{split}
[T,e_1] & =(\mbox{Im}A_{11}+\omega(T))e_2, \\
[T,e_2] & =-(\mbox{Im}A_{\overline{1}\,\overline{1}}+\omega(T))e_1,\\
d\omega&=-2We^1\wedge e^2,\\
[e_1,e_2]&=-2T-\omega(e_1)e_1-\omega(e_2)e_2.\\
-2W&=e_1\omega(e_2)-e_2\omega(e_1)+\omega(e_2)^2+\omega(e_1)^2+2\omega(T).
\end{split}
\end{equation}
Here, the first two formulas in (\ref{7.9}) follow from 
(A.7r) in \cite{CHMY} and the assumption (\ref{7.1}),
the third formula in (\ref{7.9}) follows from 
(A.5) in \cite{CHMY}
and the assumption (\ref{7.1}), 
the second last formula in (\ref{7.9}) follows from 
(7.3) in \cite{CYZ} (see also (A.6r) in \cite{CHMY}), 
and the last formula in (\ref{7.9}) follows from 
(7.8) in \cite{CYZ}. 
We would like to take this opportunity to point out that the second equation 
in (A.7r) of \cite{CHMY} is stated incorrectly; and the correct 
one should be
$$[e_2,T]=\big((\mbox{Im}A_{\overline{1}\,\overline{1}}+\omega(T)\big)e_1-(\mbox{Re}A_{\overline{1}\,\overline{1}})e_2.$$
We thank Prof. Jih-Hsin Cheng and Prof. Yongbing Zhang 
for communicating to us about this.

\begin{proof}[Proof of Lemma \ref{lem7.1}]
The proof of (\ref{7.10}) is the same as that of \cite{CYZ}. 
For the readers' convenience, we include it here. 
Suppose the surfaces $F_t(\Sigma)$ are the level sets of a defining 
function $t$ such that 
$$\frac{d}{dt}F_t=fe_2+gT,~~t(F_t(\Sigma))=t.$$
We observe that 
\begin{equation*}
(fe_2+gT)(t)=1~~\mbox{ and }~~(T+\alpha e_2)(t)=0,
\end{equation*}
and hence 
\begin{equation}\label{7.14}
T(t)=-\alpha e_2(t)~~\mbox{ and }~~
(f-\alpha g)e_2(t)=1.
\end{equation}
Then for $f-\alpha g\neq 0$, we have 
\begin{equation}\label{7.15}
e_2(t)=(f-\alpha g)^{-1}=h^{-1}.
\end{equation}
Note that 
\begin{equation}\label{7.16}
e_1(t)\equiv 0~~\mbox{ and }~~T=V-\alpha e_2. 
\end{equation}
This together with 
(\ref{7.14}), (\ref{7.15}) and the second last formula in (\ref{7.9})
implies that 
\begin{equation*}
\begin{split}
[e_1,e_2](t)
&=e_1e_2(t)-e_2e_1(t)=e_1e_2(t)=e_1(h^{-1})=-h^{-2}e_1(h)\\
&=-\omega(e_1)e_1(t)-\omega(e_2)e_2(t)-2T(t)\\
&=-\omega(e_2)h^{-1}+2\alpha h^{-1},
\end{split}
\end{equation*}
which gives (\ref{7.10}). 

Now applying the first formula of (\ref{7.9}) to $t$ yields 
\begin{equation*}
\begin{split}
&h^{-1}\big(\mbox{Im}A_{11}+\omega(T)\big)=\big(\mbox{Im}A_{11}+\omega(T)\big)e_2(t)\\
&=[T,e_1](t)=-e_1(T(t))=e_1\big(\alpha e_2(t)\big)
=e_2(t)e_1(\alpha)+\alpha e_1(e_2(t))\\
&=h^{-1}e_1(\alpha)+\alpha e_1(h^{-1})
=h^{-1}e_1(\alpha)-\alpha h^{-2} e_1(h),
\end{split}
\end{equation*}
where we have used (\ref{7.14})-(\ref{7.16}). 
Multiplying $h$ on both sides gives  (\ref{7.11}). 

Similarly, applying the second formula of (\ref{7.9}) to $t$ yields
\begin{equation*}
\begin{split}
0&=-\big(\mbox{Im}A_{\overline{1}\,\overline{1}}+\omega(T)\big)e_1(t)\\
&=[T,e_2](t)=T(e_2(t))-e_2(T(t))=T(h^{-1})+e_2\big(\alpha h^{-1}\big)\\
&=h^{-2}[-\alpha e_2(h)+he_2(\alpha)-T(h)],
\end{split}
\end{equation*}
where we have used (\ref{7.14})-(\ref{7.16}). This gives 
$$e_2(\alpha)=h^{-1}\big(\alpha e_2(h)+T(h)\big)
=h^{-1}(T+\alpha e_2)(h)=h^{-1}V(h).$$
This proves (\ref{7.12}). 

Finally, we compute 
\begin{equation*}
\begin{split}
-2W&=d\omega(e_1,e_2)=e_1\omega(e_2)-e_2\omega(e_1)+\omega(e_2)^2+\omega(e_1)^2+2\omega(T)\\
&=e_1(h^{-1}e_1(h)+2\alpha)-e_2(H)+(h^{-1}e_1(h)+2\alpha)^2+H^2\\
&\hspace{4mm}+2
(e_1(\alpha)-\alpha h^{-1}e_1(h)-\mbox{Im}A_{11}).
\end{split}
\end{equation*}
where the first two equalities follow from the third and the last formulas in (\ref{7.9}), 
the last equality follows from (\ref{7.2}), (\ref{7.10}) and (\ref{7.11}). 
Simplifying it gives 
(\ref{7.13}). This completes the proof of Lemma \ref{lem7.1}. 
\end{proof}

\begin{lem}\label{lem7.2}
On $\Sigma$, we have 
\begin{equation}\label{7.17}
e_1e_1(\alpha)
=V(H)-6\alpha e_1(\alpha)-\alpha H^2-4\alpha^3-2W\alpha+2\alpha \mbox{\emph{Im}}A_{11}.
\end{equation}
\end{lem}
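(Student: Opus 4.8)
The plan is to derive \eqref{7.17} by computing the second-order derivative $e_1 e_1(\alpha)$ through the commutator structure already encoded in Lemma \ref{lem7.1}, rather than by any new geometric input. The key observation is that $\alpha$ satisfies the structural constraints \eqref{7.10}--\eqref{7.13}, and in particular the curvature identity \eqref{7.13} expresses $e_2(H)$ in terms of $\alpha$, $H$, $W$, $\mathrm{Im}\,A_{11}$ and the quantity $h^{-1}e_1 e_1(h) + 2\alpha h^{-1}e_1(h)$. Since $h$ is an auxiliary function tied to the variation, my first step is to eliminate it: I would combine \eqref{7.10}, which gives $h^{-1}e_1(h) = \omega(e_2) - 2\alpha$, with its $e_1$-derivative to rewrite $h^{-1}e_1 e_1(h)$ purely in terms of $\omega(e_2)$, $e_1(\omega(e_2))$, $\alpha$, and $e_1(\alpha)$. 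The aim is to get an expression for $e_2(H)$ free of explicit $h$-dependence.

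The second step is to connect $e_2(H)$ back to $e_1 e_1(\alpha)$. Here I would differentiate the relation $\omega(e_1) = H$ from \eqref{7.2} and use the commutation formulas in \eqref{7.9}, especially $[e_1,e_2] = -2T - \omega(e_1)e_1 - \omega(e_2)e_2$, applied to suitable functions. The natural route is to apply the commutator identity to $\alpha$: computing $[e_1,e_2](\alpha)$ two ways — once by expanding $e_1 e_2(\alpha) - e_2 e_1(\alpha)$ and once via the right-hand side of the bracket formula — will relate $e_2 e_1(\alpha)$, and hence (after accounting for the $T(\alpha)$ term through $V = T + \alpha e_2$) the mixed second derivatives to $e_1 e_1(\alpha)$. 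Using \eqref{7.12} to handle $e_2(\alpha) = h^{-1}V(h)$ and differentiating it with $e_1$ should produce the cross term that matches against \eqref{7.13}.

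The expected main obstacle is the careful bookkeeping of the first-order terms: every application of a commutator brings in connection coefficients $\omega(e_1) = H$, $\omega(e_2)$, $\omega(T)$, and each of these must be substituted using \eqref{7.10}--\eqref{7.11} and then re-expanded, which generates many quadratic terms in $\alpha$ and $H$ that have to cancel or combine precisely to yield the coefficients $-6\alpha$, $-\alpha H^2$, $-4\alpha^3$, $-2W\alpha$, $+2\alpha\,\mathrm{Im}A_{11}$ in \eqref{7.17}. In particular, tracking the $\alpha h^{-1}e_1(h)$ contributions consistently between \eqref{7.13} and the differentiated form of \eqref{7.10} is where sign and coefficient errors are most likely. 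My strategy to control this is to introduce $\omega(e_2)$ as a single abbreviation throughout, defer substituting its value from \eqref{7.10} until the very end, and only at the last stage replace all connection terms and the residual $h$-derivatives, so that the cancellation of the $h^{-1}e_1 e_1(h)$ terms between the two computations is manifest before the final algebraic simplification.
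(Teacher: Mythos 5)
Your overall strategy---deriving \eqref{7.17} from the formulas of Lemma \ref{lem7.1} in the canonical gauge of the foliation and checking that the $h$-dependence cancels---is viable and genuinely different from the paper's route: the paper avoids Lemma \ref{lem7.1} entirely by re-extending the frame off $\Sigma$ with $\nabla_{e_2}e_2=0$, so that $\omega(e_2)=0$, and then works with the resulting clean relation $\omega(T)=e_1(\alpha)+2\alpha^2-\mathrm{Im}\,A_{11}$. However, as written your plan has a genuine gap: the mechanism you propose for producing the pure tangential second derivative $e_1e_1(\alpha)$ does not produce it. The identity $[e_1,e_2](\alpha)=e_1e_2(\alpha)-e_2e_1(\alpha)=-2T(\alpha)-\omega(e_1)e_1(\alpha)-\omega(e_2)e_2(\alpha)$ contains only first derivatives of $\alpha$ on the right and only the mixed second derivatives $e_1e_2(\alpha)$, $e_2e_1(\alpha)$ on the left; no substitution from \eqref{7.10}--\eqref{7.13} turns these into $e_1e_1(\alpha)$. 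Moreover \eqref{7.13} is nothing but the curvature identity (the last line of \eqref{7.9}) rewritten via \eqref{7.10}--\eqref{7.11}, so ``matching against \eqref{7.13}'' by itself adds no new information.

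The missing ingredient is the $e_1$-derivative of \eqref{7.11}. Since $\omega(T)=e_1(\alpha)-\alpha h^{-1}e_1(h)-\mathrm{Im}\,A_{11}$, the term $e_1e_1(\alpha)$ arises precisely from $e_1(\omega(T))$, and to evaluate $e_1(\omega(T))$ independently you must use $d\omega(e_1,T)=0$ (a consequence of $d\omega=-2We^1\wedge e^2$) together with the bracket $[T,e_1]=(\mathrm{Im}A_{11}+\omega(T))e_2$ from \eqref{7.9}; this converts $e_1(\omega(T))$ into $T(H)-\omega(e_2)\bigl(\mathrm{Im}A_{11}+\omega(T)\bigr)$ with $T(H)=V(H)-\alpha e_2(H)$, and only at this point does \eqref{7.13} enter, to eliminate $e_2(H)$. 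If you carry this out keeping $u:=h^{-1}e_1(h)$ as an abbreviation, the terms $e_1(\alpha)\,u$ and $\alpha\, e_1(u)$ cancel between the two sides and \eqref{7.17} drops out, so your elimination-of-$h$ idea does close---but only after you insert the step involving $d\omega(e_1,T)$ and $[T,e_1]$, which your proposal replaces by the commutator $[e_1,e_2]$ applied to $\alpha$, a dead end for this purpose.
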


The proof is similar to that in \cite[Section 3.2, (3.24) in Lemma 5]{CYZ}
 with the torsion term added. For the reader's convenience, we
include the proof in the Appendix.

\begin{lem}\label{lem7.3}
There holds
\begin{equation}
\label{7.21}\frac{d}{dt}[F_t^*(\theta\wedge e^1)]
=\big(-fH+V(g)\big)\theta\wedge e^1,
\end{equation}
\begin{equation}\label{7.22}
\begin{split}
\frac{dH}{dt}
&=e_1e_1(h)+2\alpha e_1(h)+4h\left(e_1(\alpha)+\alpha^2+\frac{1}{4}H^2+\frac{1}{2}W-\frac{1}{2}\mbox{\emph{Im}}A_{11}\right)\\
&\hspace{4mm}
+g\big[V(H)-2\alpha\mbox{\emph{Im}}A_{11}-h^{-1}e_1(h)\mbox{\emph{Im}}A_{11}+\mbox{\emph{Im}}A_{11}\big],
\end{split}
\end{equation}
\begin{equation}\label{7.23}
\frac{d\alpha}{dt}
=V(h)+gV(\alpha),
\end{equation}
and 
\begin{equation}\label{7.24}
\frac{d}{dt}e_1(\alpha)=e_1V(h)+ge_1V(\alpha)+2fV(\alpha)+fHe_1(\alpha).
\end{equation}
\end{lem}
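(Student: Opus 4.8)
The plan is to realize the variation through the defining-function picture used in the proof of Lemma \ref{lem7.1}: choose the flow so that the surfaces $F_t(\Sigma)$ are the level sets $\{t=\mbox{const}\}$ and $X=fe_2+gT$ satisfies $X(t)=1$. Writing $X=fe_2+gT=he_2+gV$ with $h=f-\alpha g$ and $V=T+\alpha e_2$, the tangential piece $gV$ merely reparametrizes while $he_2$ is the genuinely transverse motion. In this picture $\alpha$, $H=\omega(e_1)$ and $e_1(\alpha)$ all extend to functions near $\Sigma$, so the material derivative $\tfrac{d}{dt}$ acts as the ambient derivation $X$. Each of the four identities is then reduced using the transverse-derivative formulas \eqref{7.10}--\eqref{7.13} of Lemma \ref{lem7.1}, the relation \eqref{7.17} of Lemma \ref{lem7.2}, and the structure/commutator formulas \eqref{7.9}, always invoking $T=V-\alpha e_2$ and $f-\alpha g=h$ to collapse the $f$- and $g$-contributions.

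For \eqref{7.21} I would use Cartan's formula $\mathcal{L}_X(\theta\wedge e^1)=d\,\iota_X(\theta\wedge e^1)+\iota_X\,d(\theta\wedge e^1)$. Here $\theta(X)=g$ and $e^1(X)=0$, so $\iota_X(\theta\wedge e^1)=g\,e^1$; moreover $d\theta=2\,e^1\wedge e^2$ gives $d(\theta\wedge e^1)=-\theta\wedge de^1$. The two $de^1$ contributions cancel, leaving $dg\wedge e^1+\theta\wedge\iota_X(de^1)$. Restricting to $\Sigma$, where $e^2=\alpha\theta$ by \eqref{7.3}, every term carrying a factor $\theta\wedge e^2$ dies — in particular the torsion term $-c_2\theta\wedge e^2$ of \eqref{7.5} — and one is left with $\big(V(g)-fH\big)\theta\wedge e^1$, which is \eqref{7.21}.

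For \eqref{7.23} and \eqref{7.24} I would apply $\tfrac{d}{dt}=X$ directly. The first is immediate: $\tfrac{d\alpha}{dt}=f\,e_2(\alpha)+g\,T(\alpha)$, and substituting $e_2(\alpha)=h^{-1}V(h)$ from \eqref{7.12} together with $T(\alpha)=V(\alpha)-\alpha e_2(\alpha)$ collapses the coefficient $f-\alpha g$ into $h$, giving $V(h)+gV(\alpha)$. For \eqref{7.24} I would expand $\tfrac{d}{dt}e_1(\alpha)=X(e_1(\alpha))=h\,e_2e_1(\alpha)+g\,Ve_1(\alpha)$ and commute via $[e_2,e_1]=2T+He_1+\omega(e_2)e_2$ from \eqref{7.9} and \eqref{7.2}, feeding in \eqref{7.10} and \eqref{7.12}. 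The key simplification is that \eqref{7.10} and \eqref{7.11} combine into $\mbox{Im}A_{11}+\omega(T)+\alpha\omega(e_2)-e_1(\alpha)-2\alpha^2=0$, which is exactly what converts $g\,Ve_1(\alpha)$ into $g\,e_1V(\alpha)$ and produces the remaining $2fV(\alpha)+fHe_1(\alpha)$; here the torsion cancels out completely.

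The genuinely hard step is \eqref{7.22}, the first variation of the $p$-mean curvature $H=\omega(e_1)$, which is second order in $h$ — a Jacobi-type operator — and is the step where torsion plays the most delicate role. The bulk of the variation is the transverse-plus-tangential derivative $h\,e_2(H)+gV(H)$; inserting the transverse-derivative identity \eqref{7.13} immediately produces the second-order term $e_1e_1(h)$ together with $2\alpha e_1(h)+4h\big(e_1(\alpha)+\alpha^2+\tfrac14H^2+\tfrac12W-\tfrac12\mbox{Im}A_{11}\big)$, so the entire $h$-part of \eqref{7.22} falls out at once. The remaining $g$-terms require tracking how the canonical frame $e_1$ must rotate to stay in $T\Sigma_t\cap\xi$ as the surface moves in the Reeb direction $gT$; this rotation is controlled by the commutator $[T,e_1]=(\mbox{Im}A_{11}+\omega(T))e_2$ from \eqref{7.9}, and combining it carefully with \eqref{7.2} and \eqref{7.10}--\eqref{7.13} is what yields the torsional contribution $g\,\mbox{Im}A_{11}\big(1-2\alpha-h^{-1}e_1(h)\big)$ appearing in \eqref{7.22}. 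I expect this last bookkeeping — keeping the frame-rotation term consistent with the level-set normalization used for the other three identities, and verifying that no spurious torsion survives — to be the principal obstacle.
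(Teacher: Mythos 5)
Your treatment of \eqref{7.21} is essentially the paper's (Cartan's formula plus $e^2=\alpha\theta$ and $\theta\wedge\omega=H\,\theta\wedge e^1$ on $\Sigma$), and your treatment of \eqref{7.23} and \eqref{7.24} is a legitimate shortcut: the paper instead obtains \eqref{7.23} by computing $\tfrac{d}{dt}F_t^*(e^2\wedge e^1)$ in two ways and \eqref{7.24} by a Lie-derivative computation on $d\alpha$, but both of those reduce to $X(\alpha)$ and $X(e_1(\alpha))$ for the foliation-extended quantities, which is exactly what you compute directly with \eqref{7.12} and the commutators in \eqref{7.9}. So three of the four identities are fine.

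The gap is in \eqref{7.22}, and it is not just bookkeeping. Your stated framework is that $\alpha$, $H=\omega(e_1)$, $e_1(\alpha)$ extend to a neighborhood via the level-set foliation and that $\tfrac{d}{dt}$ acts as the ambient derivation $X=he_2+gV$. Under that premise $\tfrac{dH}{dt}=X(H)=h\,e_2(H)+g\,V(H)$ with \emph{no} room for a further correction: the rotation of the canonical frame from leaf to leaf is already encoded in the ambient extension of $e_1$, which is precisely the extension under which \eqref{7.13} was derived. Feeding \eqref{7.13} into $h\,e_2(H)$ does reproduce the entire first line of \eqref{7.22}, as you say, but the remaining term is then exactly $g\,V(H)$ and nothing else. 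The additional torsional terms $g\,\mbox{Im}A_{11}\big(1-2\alpha-h^{-1}e_1(h)\big)$ that appear in \eqref{7.22} therefore cannot be produced by your route; your appeal to a ``frame rotation controlled by $[T,e_1]$'' is never derived and in fact contradicts your own premise that $\tfrac{d}{dt}=X$ on extended functions. The paper does something genuinely different here: it computes $\tfrac{dH}{dt}=(L_X\omega)(e_1)+\omega([X,e_1])$ in \eqref{7.28}, expanding the bracket with $[T,e_1]=(\mbox{Im}A_{11}+\omega(T))e_2$ and $[e_2,e_1]$ from \eqref{7.9}, then substitutes \eqref{7.10}, \eqref{7.11} and finally \eqref{7.17}; the $\mbox{Im}A_{11}$ terms in the $g$-bracket enter through the term $g(\mbox{Im}A_{11}+\omega(T))\omega(e_2)$ in that expansion. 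To close the gap you must either carry out that Lie-derivative computation explicitly, or else explain why $\tfrac{dH}{dt}$ differs from $X(H)$ for the extension you are using --- you cannot have both \eqref{7.13} applied to $h\,e_2(H)$ and an unexplained extra $g$-term. (Indeed, since $X(H)=f e_2(H)+g T(H)$ identically and $T(H)=e_1(\omega(T))+(\mbox{Im}A_{11}+\omega(T))\omega(e_2)$ by $d\omega(T,e_1)=0$, reconciling your route with the paper's forces you to confront exactly where the surviving $g\,\mbox{Im}A_{11}$ contribution comes from; this reconciliation is the actual content of the lemma and is absent from the proposal.)
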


The proof is similar to that in \cite[Section 3.2, Lemma 6]{CYZ}
 with the torsion term added. We
include the proof in the Appendix for the reader's convenience.

The following lemma could be found in \cite[Section 3.2, Lemma 7]{CYZ}. 

\begin{lem}\label{lem7.4}
Suppose either $f_1$ and $f_2$ has compact support in the nonsingular domain of $\Sigma$. 
Then we have 
\begin{equation*}
\int_\Sigma f_1 e_1(f_2)\theta\wedge e^1=-\int_\Sigma[e_1(f_1)+2\alpha f_1]f_2\theta\wedge e^1,
\end{equation*}
and 
\begin{equation*}
\int_\Sigma f_1 V(f_2)\theta\wedge e^1=-\int_\Sigma[V(f_1)-\alpha Hf_1]f_2\theta\wedge e^1.
\end{equation*}
\end{lem}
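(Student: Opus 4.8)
The plan is to prove both identities by integrating by parts on the two-dimensional manifold $\Sigma$, using the area form $dV:=\theta\wedge e^1$ together with Stokes' theorem. Since $f_1$ and $f_2$ have compact support in the nonsingular region, all boundary terms will vanish. The key observation is that for any vector field $X$ tangent to $\Sigma$ and any compactly supported function $\psi$ one has
\begin{equation*}
\int_\Sigma X(\psi)\,dV=-\int_\Sigma \psi\,(\mbox{div}\,X)\,dV,
\end{equation*}
where $\mbox{div}\,X$ is defined by $L_X(dV)=(\mbox{div}\,X)\,dV$. This follows from the top-degree identity $d\psi\wedge i_X dV=X(\psi)\,dV$ together with $\int_\Sigma d(\psi\, i_X dV)=0$. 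Taking $\psi=f_1f_2$ and using $X(f_1f_2)=f_1X(f_2)+f_2X(f_1)$ then gives
\begin{equation*}
\int_\Sigma f_1X(f_2)\,dV=-\int_\Sigma\big[X(f_1)+f_1\,\mbox{div}\,X\big]f_2\,dV.
\end{equation*}
Thus both stated formulas reduce to computing $\mbox{div}\,e_1$ and $\mbox{div}\,V$ with respect to $dV$.

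Next I would carry out these two divergence computations by restricting the structure equations to $\Sigma$, where we may use $e^2=\alpha\theta$ from \eqref{7.3} and $\omega(e_1)=H$ from \eqref{7.2}. For $X=e_1$, since $\theta(e_1)=0$ and $e^1(e_1)=1$ we get $i_{e_1}(\theta\wedge e^1)=-\theta$, so that $L_{e_1}dV=-d\theta$; restricting $d\theta=2e^1\wedge e^2$ to $\Sigma$ and using $e^2=\alpha\theta$ yields $d\theta=-2\alpha\,\theta\wedge e^1$, whence $\mbox{div}\,e_1=2\alpha$. For $X=V=T+\alpha e_2$, since $\theta(V)=1$ and $e^1(V)=0$ we get $i_V(\theta\wedge e^1)=e^1$, so $L_V dV=de^1$; restricting \eqref{7.25}, namely $de^1=-e^2\wedge\omega-c_2\theta\wedge e^2$, to $\Sigma$ and using $e^2=\alpha\theta$ together with $\theta\wedge\omega=\omega(e_1)\,\theta\wedge e^1=H\,\theta\wedge e^1$ yields $de^1=-\alpha H\,\theta\wedge e^1$, whence $\mbox{div}\,V=-\alpha H$.

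Substituting $\mbox{div}\,e_1=2\alpha$ and $\mbox{div}\,V=-\alpha H$ into the integration by parts formula above immediately produces the two claimed identities. I do not expect a serious obstacle: the argument is a direct application of Stokes' theorem, and the only point requiring genuine care is the correct restriction of the ambient structure equations to $\Sigma$, in particular discarding the terms that vanish because $e^2=\alpha\theta$ forces $\theta\wedge e^2=0$ and $\theta\wedge\theta=0$. The compact support hypothesis is exactly what guarantees that the exact two-form $d(\psi\, i_X dV)$ integrates to zero, so that no boundary contributions survive.
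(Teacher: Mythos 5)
Your proof is correct and is essentially the paper's own argument in divergence-theorem packaging: since $i_{e_1}(\theta\wedge e^1)=-\theta$ and $i_V(\theta\wedge e^1)=e^1$, your exact forms $d(\psi\,i_XdV)$ are precisely the $d(f_1f_2\theta)$ and $d(f_1f_2e^1)$ that the paper integrates via Stokes, and you invoke the same facts $d\theta=2e^1\wedge e^2$, $e^2=\alpha\theta$, $de^1=-e^2\wedge\omega-c_2\theta\wedge e^2$, $\omega(e_1)=H$ to reach $\operatorname{div}e_1=2\alpha$ and $\operatorname{div}V=-\alpha H$. The computations and signs all check out, so this is a valid proof matching the paper's route.
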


\begin{theorem}\label{thm7.1}
Assume that $(M,J,\theta)$ has constant Webster scalar curvature and 
its torsion is constant and purely imaginary in the sense of \eqref{7.1}. 
Let $F_t(\Sigma)$ be given by \eqref{7.8} such that 
\begin{equation}\label{condition}
e_1(h)+2\alpha h=h.
\end{equation}
Then we have 
\begin{equation}\label{7.33}
\frac{d}{dt}\int_{F_t(\Sigma)}dA_2
=\int_\Sigma\mathcal{E}_2 h\theta\wedge e^1,
\end{equation}
where 
\begin{align}\label{7.34}
\begin{split}
\mathcal{E}_2&=\frac{4}{9}\Bigg\{He_1e_1(H)+3e_1V(H)+e_1(H)^2+\frac{1}{3}H^4\\
&\hspace{12mm}+3e_1(\alpha)^2+12\alpha^2e_1(\alpha)+12\alpha^4\\
&\hspace{12mm}-\alpha He_1(H)+2H^2 e_1(\alpha)+5\alpha^2H^2\\
&\hspace{12mm}+\frac{3}{2} W\Big(e_1(\alpha)+\frac{2}{3}H^2+5\alpha^2+\frac{1}{2}W\Big)\\
&\hspace{12mm}+6\mbox{\emph{Im}}A_{11}\left[\frac{1}{2}\mbox{\emph{Im}}A_{11}-e_1(\alpha)-2\alpha^2-\frac{5}{8}W-\frac{1}{6}H^2\right]\Bigg\}.
\end{split}
\end{align}
\end{theorem}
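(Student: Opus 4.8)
The plan is to differentiate $\int_{F_t(\Sigma)}dA_2$ directly, using the decomposition of $dA_2$ furnished by \eqref{7.0} together with the variational formulas collected in Lemma \ref{lem7.3}, and then to integrate by parts via Lemma \ref{lem7.4} so as to isolate the coefficient of $h$. First I would discard the exact term. By \eqref{7.0},
$$dA_2 = P\,H\,\theta\wedge e^1 + d(\alpha e^1), \qquad P:=\tfrac{2}{3}e_1(\alpha)+\tfrac{4}{3}\alpha^2-\tfrac{2}{3}\mathrm{Im}\,A_{11}+\tfrac{1}{6}W+\tfrac{2}{27}H^2.$$
Since $F_t^*$ commutes with $d$ and $X=fe_2+gT$ has compact support in the nonsingular domain, the contribution of $d(\alpha e^1)$ to the first variation is the integral of an exact form with compactly supported primitive, hence vanishes by Stokes' theorem. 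Thus it suffices to vary $\int_{F_t(\Sigma)} P\,H\,\theta\wedge e^1$.

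Second, I would apply the product rule. Because $W\equiv c_1$ and $\mathrm{Im}\,A_{11}=c_2$ are constant on $M$ by \eqref{7.1}, the field $X$ annihilates them, so only $e_1(\alpha)$, $\alpha$ and $H$ vary inside $P$ and $H$. Using \eqref{7.21} for the area element and \eqref{7.22}, \eqref{7.23}, \eqref{7.24} for $\tfrac{dH}{dt}$, $\tfrac{d\alpha}{dt}$ and $\tfrac{d}{dt}e_1(\alpha)$, I would expand
$$\frac{d}{dt}\!\int_{F_t(\Sigma)} P H\,\theta\wedge e^1 = \int_\Sigma\Big[\tfrac{dP}{dt}H + P\tfrac{dH}{dt} + PH\big(-fH+V(g)\big)\Big]\theta\wedge e^1,$$
where $\tfrac{dP}{dt}=\tfrac{2}{3}\tfrac{d}{dt}e_1(\alpha)+\tfrac{8}{3}\alpha\tfrac{d\alpha}{dt}+\tfrac{4}{27}H\tfrac{dH}{dt}$. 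This produces an integrand that is linear in $h$, $g$ and their derivatives $e_1(h)$, $V(h)$, $e_1e_1(h)$, $e_1V(h)$.

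Third, I would integrate by parts using Lemma \ref{lem7.4}: each first-order term $\int C\,e_1(h)$ or $\int C\,V(h)$ is moved onto its coefficient $C$, while the second-order terms $\int C\,e_1e_1(h)$ and $\int C\,e_1V(h)$, coming from the $e_1e_1(h)$ in \eqref{7.22} and the $e_1V(h)$ in \eqref{7.24}, are handled by applying Lemma \ref{lem7.4} twice. These double integrations by parts are precisely what produce the second-order terms $He_1e_1(H)$, $e_1(H)^2$ and $e_1V(H)$ in \eqref{7.34}, the requisite commutators being absorbed through the bracket relations of Lemma \ref{lem7.1}. At this point the constraint \eqref{condition}, rewritten as $h^{-1}e_1(h)=1-2\alpha$, is used to convert the awkward factor $h^{-1}e_1(h)$ appearing in \eqref{7.22} into an explicit function of $\alpha$, which is what allows every surviving term to be displayed as a multiple of $h$; the auxiliary identities \eqref{7.17} for $e_1e_1(\alpha)$ and \eqref{7.13} for $e_2(H)$ from Lemmas \ref{lem7.1} and \ref{lem7.2} are invoked wherever such second derivatives arise.

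Finally, collecting the coefficient of $h$ yields \eqref{7.34}, the overall factor $\tfrac{4}{9}$ arising from the coefficients of $P$. As a consistency check I would verify that every term carrying $g$ cancels, as it must because $E_2$ is CR invariant and motion in the pure Reeb direction $T$ is a null direction of the first variation. The main obstacle is organizational rather than conceptual: the expansion contains a large number of terms, the two second-order integrations by parts must be executed with care, and—because, in contrast to the torsion-free setting of \cite{CYZ}, here $\mathrm{Im}\,A_{11}=c_2\neq 0$—the torsion contributions must be tracked through every integration by parts in order to recover the final line $6\,\mathrm{Im}\,A_{11}\big[\tfrac{1}{2}\mathrm{Im}\,A_{11}-e_1(\alpha)-2\alpha^2-\tfrac{5}{8}W-\tfrac{1}{6}H^2\big]$ of \eqref{7.34}.
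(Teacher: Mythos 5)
Your proposal follows essentially the same route as the paper: the same reduction via \eqref{7.0} discarding the exact term $d(\alpha e^1)$, the same product-rule expansion using \eqref{7.21}--\eqref{7.24}, integration by parts through Lemma \ref{lem7.4} with the commutator \eqref{7.35} and the identity \eqref{7.17}, and the same use of \eqref{condition} to kill the $h^{-1}e_1(h)\,\mathrm{Im}A_{11}$ terms in \eqref{7.22}. The only slight imprecision is that the $g$-terms do not cancel outright; rather, as in the paper's computation of $F+G_1+G_2$, they combine with the $f$-terms so that only the combination $h=f-\alpha g$ survives, which is exactly the tangential-variation heuristic you invoke.
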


With Lemmas \ref{lem7.1}-\ref{lem7.4},
the proof of Theorem \ref{thm7.1} is similar to that in \cite[Section 3.2, Theorem 8]{CYZ}
 with the torsion term added. In the Appendix, we
include the proof  for the reader's convenience.

\section{Rossi sphere}\label{section8}

In this section, we study the invariant area functionals 
in the Rossi sphere. 
We first include some computations in the Rossi sphere. 
They could be in fact found in \cite{CHP,CH,T}. 
The standard CR sphere
$$S^3=\big\{(z_1,z_2)\in\mathbb{C}^2: |z_1|^2+|z_2|^2=1\big\}$$
has the CR structure spanned by
\begin{equation}\label{10.1}
Z_1=\overline{z}_2\frac{\partial}{\partial z_1}-\overline{z}_1\frac{\partial}{\partial z_2}.
\end{equation}
A canonical contact form $\theta$ on $S^3$ is given by
\begin{equation}\label{10.2}
\begin{split}
\theta&=\frac{i}{2}(z_1d\overline{z}_1+z_2d\overline{z}_2-\overline{z}_1dz_1-\overline{z}_2dz_2)\Big|_{S^3}\\
&=\rho_1^2d\varphi_1+\rho_2^2d\varphi_2.
\end{split}
\end{equation}
where $z_1=\rho_1 e^{i\varphi_1}$ and $z_2=\rho_2 e^{i\varphi_2}$ are the polar coordinates. 
The almost complex structure $J$ maps 
$$J(Z_1)=iZ_1~~\mbox{ and }~~J(Z_{\overline{1}})=-iZ_{\overline{1}}$$
where 
$$Z_{\overline{1}}=\overline{Z_1}=z_2\frac{\partial}{\partial \overline{z}_1}-z_1\frac{\partial}{\partial \overline{z}_2}.$$
The Reeb vector field with respect to $\theta$ is
\begin{equation}\label{10.3}
\begin{split}
T&=i\left(z_1\frac{\partial}{\partial z_1}+z_2\frac{\partial}{\partial z_2}-\overline{z}_1\frac{\partial}{\partial\overline{z}_1}-\overline{z}_2\frac{\partial}{\partial\overline{z}_2}\right)\\
&=\frac{\partial}{\partial\varphi_1}+\frac{\partial}{\partial\varphi_2}.
\end{split}
\end{equation}
The admissible coframe corresponding to $(T,Z_1,Z_{\overline{1}})$ is given by
$$(\theta,\theta^1=(z_2dz_1-z_1dz_2)|_{S^3},\theta^{\overline{1}}=\overline{\theta^1}).$$
For this coframe, we have
$$d\theta=i\theta^1\wedge \theta^{\overline{1}}~~\mbox{ and }~~d\theta^1=2i\theta\wedge\theta^1.$$
This implies the torsion, the connection $1$-form, and the Webster curvature 
of the standard CR sphere are respectively given by 
$$A_{11}=0,~~\omega_1^1=-2i\theta,~~W=2.$$

Now, for a real number $|t|<1$, the Rossi sphere $S_t^3$ is
the unit sphere $S^3$ equipped with the CR structure spanned by
$$Z_1(t)=\frac{Z_1+tZ_{\overline{1}}}{\sqrt{1-t^2}}.$$
The almost complex structure $J_t$ maps
\begin{equation}\label{10.4}
J_t\big(Z_1(t)\big)=iZ_1(t)~~\mbox{ and }~~J_t\big(Z_{\overline{1}}(t)\big)=-iZ_{\overline{1}}(t).
\end{equation}
The contact form is
$$\theta(t)=\theta$$
in (\ref{10.2}), 
and the Reeb vector field with respect to $\theta(t)$ is $T$ as in (\ref{10.3}). 
The admissible coframe with respect to $(T,Z_1(t),Z_{\overline{1}}(t))$ is given by
$$\Big(\theta(t)=\theta,\theta^1(t)=\frac{1}{\sqrt{1-t^2}}(\theta^1-t\theta^{\overline{1}}),\theta^{\overline{1}}(t)=\overline{\theta^1(t)}\Big).$$
For this coframe, we have
\begin{equation}\label{10.5}
\begin{split}
d\theta(t)&=i\theta^1(t)\wedge\theta^{\overline{1}}(t),\\
d\theta^1(t)&=\frac{2i(1+t^2)}{1-t^2}\theta(t)\wedge \theta^1(t)+\frac{4it}{1-t^2}\theta(t)\wedge \theta^{\overline{1}}(t).
\end{split}
\end{equation}
These imply that  the torsion, the connection $1$-form, and the Webster curvature 
of the Rossi sphere are respectively given by 
\begin{equation}\label{10.6}
A_{11}(t)=\overline{A_{\overline{1}\,\overline{1}}(t)}=\overline{\left(\frac{4it}{1-t^2}\right)}=-\frac{4it}{1-t^2},~~\omega(t)_1^1=-\frac{2i(1+t^2)}{1-t^2}\theta(t),~~W=\frac{2(1+t^2)}{1-t^2}.
\end{equation}
To reiterate, it follows from (\ref{10.6}) that the torsion on the Rossi sphere does not vanish. 
Moreover, it follows from 
(\ref{10.6}) that, on the Rossi sphere, the Webster curvature 
is constant and the torsion is constant and purely imaginary in the sense of \eqref{7.1}. 
In particular, Theorem \ref{thm7.1} is applicable to the Rossi sphere.

\subsection{The functional $E_1$ on the Rossi sphere.}

In the Rossi sphere $S^3_t$, we consider 
a closed surface $\Sigma_{[c]}$ defined by 
$\rho_1=c$, 
where 
$$z_1=\rho_1 e^{i\varphi_1},~~ z_2=\rho_2 e^{i\varphi_2}~~\mbox{ and }~~
\rho_1^2+\rho_2^2=1.$$
Since the Reeb vector field $T$ is given by (\ref{10.3}), 
we see that $T\in T\Sigma_{[c]}$, which implies that 
\begin{equation}\label{10.6A}
\alpha\equiv 0
\end{equation}
on $\Sigma_{[c]}$. 
Note that  
\begin{equation}\label{10.7}
\rho_j\frac{\partial}{\partial\rho_j}=2\mbox{Re}\left(z_j\frac{\partial}{\partial z_j}\right)~~\mbox{ and }~~
\frac{\partial}{\partial\varphi_j}=2\mbox{Re}\left(iz_j\frac{\partial}{\partial z_j}\right)
\end{equation}
for $j=1,2$. 
By definition, $e_1\in T\Sigma_{[c]}\cap \ker\theta$. 
For $\Sigma_{[c]}$, we have
\begin{equation}\label{10.8}
e_1=c\left(-\frac{\rho_2}{\rho_1}\frac{\partial}{\partial\varphi_1}+\frac{\rho_1}{\rho_2}\frac{\partial}{\partial\varphi_2}\right)
\end{equation}
for some real constant $c$. 
To find $c$, we write 
$$
-\frac{\rho_2}{\rho_1}\frac{\partial}{\partial\varphi_1}+\frac{\rho_1}{\rho_2}\frac{\partial}{\partial\varphi_2}=aZ_1(t)+bZ_{\overline{1}}(t).$$
Since the left hand side is real, we must have $b=\overline{a}$, i.e. 
\begin{equation}\label{10.13}
-\frac{\rho_2}{\rho_1}\frac{\partial}{\partial\varphi_1}+\frac{\rho_1}{\rho_2}\frac{\partial}{\partial\varphi_2}=aZ_1(t)+\overline{a}Z_{\overline{1}}(t),
\end{equation}
which is equivalent to
\begin{equation}\label{10.9}
\begin{split}
&-\frac{\rho_2}{\rho_1}\left(iz_1\frac{\partial}{\partial z_1}-i\overline{z}_1\frac{\partial}{\partial \overline{z}_1}\right)+\frac{\rho_1}{\rho_2}\left(iz_2\frac{\partial}{\partial z_2}-i\overline{z}_2\frac{\partial}{\partial \overline{z}_2}\right)\\
&=-\frac{\rho_2}{\rho_1}\frac{\partial}{\partial\varphi_1}+\frac{\rho_1}{\rho_2}\frac{\partial}{\partial\varphi_2}=aZ_1(t)+\overline{a}Z_{\overline{1}}(t)\\
&=\frac{a+t\overline{a}}{\sqrt{1-t^2}}\overline{z}_2\frac{\partial}{\partial z_1}
+\frac{\overline{a}+ta}{\sqrt{1-t^2}}z_2\frac{\partial}{\partial \overline{z}_1}
-\frac{a+t\overline{a}}{\sqrt{1-t^2}}\overline{z}_1\frac{\partial}{\partial z_2}
-\frac{\overline{a}+ta}{\sqrt{1-t^2}}z_1\frac{\partial}{\partial \overline{z}_2}
\end{split}
\end{equation}
by (\ref{10.7}) and (\ref{10.8}). 
Equating the coefficients of $\frac{\partial}{\partial z_j}$ and $\frac{\partial}{\partial \overline{z}_j}$ 
in (\ref{10.9}), we find 
\begin{equation}\label{10.10}
\begin{split}
-\frac{\rho_2}{\rho_1}iz_1&=\frac{a+t\overline{a}}{\sqrt{1-t^2}}\overline{z}_2,~~
\frac{\rho_2}{\rho_1}i\overline{z}_1=\frac{\overline{a}+ta}{\sqrt{1-t^2}}z_2,\\
\frac{\rho_1}{\rho_2}iz_2&=-\frac{a+t\overline{a}}{\sqrt{1-t^2}}\overline{z}_1,~~
-\frac{\rho_1}{\rho_2}i\overline{z}_2=-\frac{\overline{a}+ta}{\sqrt{1-t^2}}z_1.
\end{split}
\end{equation}
Writing (\ref{10.10}) in terms of polar coordinates $z_j=\rho_j e^{i\varphi_j}$ 
for $j=1,2$, 
we see that all equations in (\ref{10.10})
are equivalent to 
\begin{equation}\label{10.11}
\frac{a+t\overline{a}}{\sqrt{1-t^2}}=-ie^{i(\varphi_1+\varphi_2)}. 
\end{equation}
Solving (\ref{10.11}) yields 
\begin{equation}\label{10.12}
a=-\frac{i\big(e^{i(\varphi_1+\varphi_2)}+te^{-i(\varphi_1+\varphi_2)}\big)}{\sqrt{1-t^2}}.
\end{equation}
By definition, $e_1$ is a unit vector with respect to $\frac{1}{2}d\theta(t)\big(\cdot, J_t\cdot\big)$. 
This implies 
\begin{equation*}
\begin{split}
1&=\frac{1}{2}d\theta(t)\big(e_1,J_te_1\big)=\frac{c^2}{2}d\theta(t)\left(
-\frac{\rho_2}{\rho_1}\frac{\partial}{\partial\varphi_1}+\frac{\rho_1}{\rho_2}\frac{\partial}{\partial\varphi_2},
J_t\Big(-\frac{\rho_2}{\rho_1}\frac{\partial}{\partial\varphi_1}+\frac{\rho_1}{\rho_2}\frac{\partial}{\partial\varphi_2}\Big)\right)\\
&=\frac{c^2}{2}d\theta(t)\big(aZ_1(t)+\overline{a}Z_{\overline{1}}(t), 
J_t(aZ_1(t)+\overline{a}Z_{\overline{1}}(t))\big)\\
&=\frac{c^2}{2}d\theta(t)\big(aZ_1(t)+\overline{a}Z_{\overline{1}}(t), 
iaZ_1(t)-i\overline{a}Z_{\overline{1}}(t)\big)\\
&=-c^2|a|^2id\theta(t)\big(Z_1(t),Z_{\overline{1}}(t)\big)\\
&=c^2|a|^2\big(\theta^1(t)\wedge\theta^{\overline{1}}(t)\big)\big(Z_1(t),Z_{\overline{1}}(t)\big)=c^2|a|^2,
\end{split}
\end{equation*}
where the second equality follows from (\ref{10.8}), 
the third equality follows from (\ref{10.13}), 
the fourth equality follows from (\ref{10.4}), 
and the second last equality follows from (\ref{10.5}). 
Thus, we have 
\begin{equation}\label{10.14A}
c=\frac{1}{|a|}. 
\end{equation}
We compute 
\begin{equation}\label{10.14}
\begin{split}
e_2&=J_t(e_1)=\frac{1}{|a|}\left(aJ_t\big(Z_1(t)\big) +\overline{a}J_t\big(Z_{\overline{1}}(t)\big)\right)=\frac{1}{|a|}\left(aiZ_1(t)-\overline{a}iZ_{\overline{1}}(t)\right)\\
&=\frac{1}{|a|}\left(\frac{i(a-t\overline{a})}{\sqrt{1-t^2}}\overline{z}_2\frac{\partial}{\partial z_1}-\frac{i(\overline{a}-ta)}{\sqrt{1-t^2}}z_2\frac{\partial}{\partial \overline{z}_1}
-\frac{i(a-t\overline{a})}{\sqrt{1-t^2}}\overline{z}_1\frac{\partial}{\partial z_2}
+\frac{i(\overline{a}-ta)}{\sqrt{1-t^2}}z_1\frac{\partial}{\partial \overline{z}_2}\right)\\
&=\frac{\rho_2}{\rho_1}\frac{1}{|a|}\left(\frac{a-t\overline{a}}{a+t\overline{a}}\right)z_1\frac{\partial}{\partial z_1}
+\frac{\rho_2}{\rho_1}\frac{1}{|a|}\left(\frac{\overline{a}-ta}{\overline{a}+ta}\right)\overline{z}_1\frac{\partial}{\partial 
\overline{z}_1}\\
&\hspace{4mm}
-\frac{\rho_1}{\rho_2}\frac{1}{|a|}\left(\frac{a-t\overline{a}}{a+t\overline{a}}\right)z_2\frac{\partial}{\partial z_2}
-\frac{\rho_1}{\rho_2}\frac{1}{|a|}\left(\frac{\overline{a}-ta}{\overline{a}+ta}\right)\overline{z}_2\frac{\partial}{\partial 
\overline{z}_2}
\end{split}
\end{equation}
where the second equality follows from  (\ref{10.8}) and (\ref{10.14A}), 
the third equality follows from (\ref{10.4}), 
and the  last equality follows from (\ref{10.10}). 
It follows from (\ref{10.11}) that 
$|a+t\overline{a}|^2=1-t^2$, 
which gives 
$$\frac{a-t\overline{a}}{a+t\overline{a}}=|a|^2-\frac{t(\overline{a}^2-a^2)}{1-t^2}~~\mbox{ and }~~
\frac{\overline{a}-ta}{\overline{a}+ta}=|a|^2+\frac{t(\overline{a}^2-a^2)}{1-t^2}.$$
This together with (\ref{10.7}), we can rewrite  (\ref{10.14}) as
\begin{equation}\label{10.15}
\begin{split}
e_2=|a|\left(\rho_2\frac{\partial}{\partial\rho_1}-\rho_1\frac{\partial}{\partial\rho_2}\right)
+i\frac{t(\overline{a}^2-a^2)}{(1-t^2)|a|}
\left(\frac{\rho_2}{\rho_1}\frac{\partial}{\partial\varphi_1}-\frac{\rho_1}{\rho_2}\frac{\partial}{\partial\varphi_2}\right).
\end{split}
\end{equation}

It follows that 
\begin{equation}\label{10.16}
\begin{split}
e^1&=-|a|\rho_1\rho_2(d\varphi_1-d\varphi_2)+\frac{it(\overline{a}^2-a^2)}{(1-t^2)|a|}(\rho_2d\rho_1-\rho_1d\rho_2),\\
e^2&=\frac{1}{|a|}(\rho_2d\rho_1-\rho_1d\rho_2).
\end{split}
\end{equation}
To check (\ref{10.16}), we can see from (\ref{10.3}), (\ref{10.8}), (\ref{10.15}) and  (\ref{10.16}) that 
$$e^j(e_k)=\delta_{jk}~~\mbox{ and }~~e^j(T)=0$$
for $j,k=1,2$. 
From (\ref{10.2}) and (\ref{10.16}), we find 
\begin{equation}\label{10.17}
\begin{split}
\theta\wedge e^1
&=|a|\rho_1\rho_2d\varphi_1\wedge d\varphi_2
+\frac{it(\overline{a}^2-a^2)}{(1-t^2)|a|}\Big[\rho_1^2(\rho_2d\varphi_1\wedge d\rho_1-\rho_1 d\varphi_1\wedge d\rho_2)\\
&\hspace{5cm}
+\rho_2^2(\rho_2 d\varphi_2\wedge d\rho_1-\rho_1d\varphi_2\wedge d\rho_2)\Big],\\
\theta\wedge e^1\wedge e^2
&=\rho_1 d\rho_1\wedge d\varphi_1\wedge d\varphi_2, 
\end{split}
\end{equation}
where we have used   
\begin{equation}\label{10.20}
\rho_1^2+\rho_2^2=1~~\mbox{ and }~~\rho_1d\rho_1+\rho_2 d\rho_2=0. 
\end{equation}
It follows from 
(\ref{10.12}) that 
\begin{equation*}
da=-i\overline{a}(d\varphi_1+d\varphi_2)~~\mbox{ and }~~
d\overline{a}=ia(d\varphi_1+d\varphi_2),
\end{equation*}
which gives 
\begin{equation}\label{10.19}
\begin{split}
d(|a|)&=-\frac{i(\overline{a}^2-a^2)}{2|a|}(d\varphi_1+d\varphi_2),\\
d\left(\frac{it(\overline{a}^2-a^2)}{(1-t^2)|a|}\right)
&=-\frac{t}{1-t^2}\left(4|a|+\frac{(\overline{a}^2-a^2)^2}{2|a|^3}\right)(d\varphi_1+d\varphi_2). 
\end{split}
\end{equation}
Taking exterior derivative of the first equation in (\ref{10.17})
and
using  (\ref{10.20}) and (\ref{10.19}), we find 
\begin{equation}\label{10.18}
\begin{split}
d(\theta\wedge e^1)
&=\frac{\rho_2^2-\rho_1^2}{\rho_2}|a|d\rho_1\wedge d\varphi_1\wedge d\varphi_2\\
&\hspace{4mm}
-\frac{t}{1-t^2}\left(4|a|+\frac{(\overline{a}^2-a^2)^2}{2|a|^3}\right)\frac{\rho_2^2-\rho_1^2}{\rho_2}d\rho_1\wedge d\varphi_1\wedge d\varphi_2.
\end{split}
\end{equation}
Since $d(\theta\wedge e^1)=-H\theta\wedge e^1\wedge e_2$, 
we deduce from (\ref{10.17}) and (\ref{10.18}) that 
\begin{equation}\label{10.21}
H=-\frac{\rho_2^2-\rho_1^2}{\rho_2\rho_1}\left[|a|-\frac{t}{1-t^2}\left(4|a|+\frac{(\overline{a}^2-a^2)^2}{2|a|^3}\right)\right].
\end{equation}

In the Rossi sphere $S_t^3$, 
it follows from (\ref{5.2}), (\ref{10.6}),
(\ref{10.6A}) that 
\begin{equation}\label{10.22}
\begin{split} 
H_{cr}&=-\mbox{Im} A_{11}+\frac{1}{4}W+\frac{1}{6}H^2=\frac{1+8t+t^2}{2(1-t^2)}+\frac{1}{6}H^2.
\end{split}
\end{equation}
In particular,  the Clifford torus $\Sigma_{[\frac{\sqrt{2}}{2}]}$
in $S^3_t$ with  $c=\rho_1=\rho_2=\frac{\sqrt{2}}{2}$
satisfying $H=0$ by (\ref{10.21}). 
Therefore, it follows from (\ref{10.22}) that 
$H_{cr}=\frac{1+8t+t^2}{2(1-t^2)}$. 
If $t_0=-4+\sqrt{15}$, we have
$0<t_0<1$ and $1+8t_0+t_0^2=0$, 
and hence, 
$H_{cr}=0$. 
From this, we conclude the following: 

\begin{lem}\label{lem5.1}
The Clifford torus
 $\Sigma_{[\frac{\sqrt{2}}{2}]}$ 
in the Rossi sphere $S^3_{t_0}$, where 
$t_0=-4+\sqrt{15}$, 
is 
 a minimizer for the energy $E_1$ 
 with zero energy.    
\end{lem}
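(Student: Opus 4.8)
The plan is to exploit the fact that the energy $E_1$ is manifestly nonnegative: since the integrand in \eqref{0.4} is the $3/2$ power of the absolute value of $H_{cr}$, we have $E_1(\Sigma)\geq 0$ for every admissible surface, with equality precisely when $H_{cr}\equiv 0$. Consequently, any surface on which $H_{cr}$ vanishes identically is automatically a global minimizer realizing $E_1=0$, and there is no need to examine the Euler--Lagrange equation at all. The entire task therefore reduces to verifying that $H_{cr}\equiv 0$ on the Clifford torus $\Sigma_{\frac{\sqrt{2}}{2}}$ inside the distinguished Rossi sphere $S^3_{t_0}$.

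First I would record that, because the Reeb field $T$ is tangent to every $\Sigma_c$, the derivation function satisfies $\alpha\equiv 0$ by \eqref{10.6A}; thus the formula \eqref{5.2} collapses to $H_{cr}=-\mathrm{Im}\,A_{11}+\tfrac14 W+\tfrac16 H^2$. Next I would specialize the curvature formula \eqref{10.21} to the Clifford torus, where $\rho_1=\rho_2=\tfrac{\sqrt{2}}{2}$. The crucial observation is that $H$ carries an overall factor $\rho_2^2-\rho_1^2$, which vanishes on the Clifford torus irrespective of the (somewhat intricate) bracketed expression involving $|a|$ and $\overline{a}^2-a^2$; hence $H=0$ immediately.

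With $\alpha=0$ and $H=0$ in hand, substituting the Rossi-sphere values $\mathrm{Im}\,A_{11}=\tfrac{4t}{1-t^2}$ and $W=\tfrac{2(1+t^2)}{1-t^2}$ from \eqref{10.6} gives, as in \eqref{10.22}, the clean expression $H_{cr}=\dfrac{1-8t+t^2}{2(1-t^2)}$. It then remains to solve $t^2-8t+1=0$ and to select the root lying in the admissible range $(-1,1)$: the two roots are $4\pm\sqrt{15}$, and only $t_0=4-\sqrt{15}\approx 0.127$ satisfies $|t_0|<1$. For this $t_0$ the numerator vanishes, so $H_{cr}\equiv 0$ on $\Sigma_{\frac{\sqrt{2}}{2}}$, and the nonnegativity observation of the first paragraph finishes the proof.

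Since essentially all of the analytic labor---deriving \eqref{10.21} and reducing \eqref{5.2} to \eqref{10.22}---has already been carried out before the statement, I do not expect a genuine obstacle here; the argument is a verification rather than a derivation. The only point requiring a moment's care is confirming that the relevant root of the quadratic genuinely lies in the interval $(-1,1)$ demanded by the definition of the Rossi sphere, so that $S^3_{t_0}$ is a legitimate member of the family.
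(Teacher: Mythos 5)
Your proposal is correct and follows essentially the same route as the paper: using $\alpha\equiv 0$ from \eqref{10.6A}, $H=0$ on the Clifford torus from the $\rho_2^2-\rho_1^2$ factor in \eqref{10.21}, the reduction $H_{cr}=\frac{1-8t+t^2}{2(1-t^2)}$ as in \eqref{10.22}, and the vanishing of the numerator at $t_0=4-\sqrt{15}\in(0,1)$, combined with the manifest nonnegativity of $E_1$.
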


As pointed out in the introduction, 
this is a sharp contrast 
to the situation in the CR sphere; for it was proved in \cite{CHY}
that if $T^2$ is a torus in the CR sphere 
without any singular points, then it cannot have $E_1=0$. 
In fact, we  have the following: 

\begin{lem}\label{lem5.3}
Suppose that 
$S^3_t$ is the Rossi sphere with $-4+\sqrt{15}<t<1$. 
If $T^2\subset S^3_t$ is a torus  without singular points, 
then it cannot have $E_1=0.$
\end{lem}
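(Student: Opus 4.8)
The plan is to argue by contradiction, exploiting that the Rossi sphere has constant, purely imaginary torsion so that the ``potential'' constant appearing in $H_{cr}$ has a definite sign in the stated range of $t$. Since
\[
E_1(T^2)=\int_{T^2}|H_{cr}|^{3/2}\,\theta\wedge e^1
\]
has a nonnegative integrand, and since $\theta\wedge e^1$ is a nowhere-vanishing area form on $T^2$ (here the hypothesis that $T^2$ has no singular points is essential: it guarantees that $\{e_1,V\}$ with $V=T+\alpha e_2$ is a global frame of $TT^2$, and that $(\theta\wedge e^1)(e_1,V)=-1\neq0$), the assumption $E_1=0$ forces the pointwise identity $H_{cr}\equiv0$ on all of $T^2$. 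The whole argument then reduces to showing that this pointwise equation is impossible.

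First I would evaluate the curvature--torsion constant on $S^3_t$. By \eqref{10.6} and \eqref{10.6A} one has, on such a surface, $\alpha$ aside,
\[
\mbox{Im}A_{11}-\frac14 W=\frac{4t}{1-t^2}-\frac{1+t^2}{2(1-t^2)}=-\,\frac{1-8t+t^2}{2(1-t^2)}=:-c_0 .
\]
Hence \eqref{5.2} rewrites as $H_{cr}=e_1(\alpha)+\frac12\alpha^2+c_0+\frac16 H^2$. For $-1<t<4-\sqrt{15}$ the denominator $1-t^2$ is positive and the numerator $1-8t+t^2$ is positive as well (its roots are $4\pm\sqrt{15}$, and $t$ lies strictly to the left of the smaller root $4-\sqrt{15}$), so $c_0>0$. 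Therefore the identity $H_{cr}\equiv0$ becomes
\[
e_1(\alpha)=-\frac12\alpha^2-c_0-\frac16 H^2\le -c_0<0\qquad\mbox{on }T^2 .
\]

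The contradiction then comes from a one-dimensional dynamical argument along the characteristic flow. Since $T^2$ is compact and, by nonsingularity, $e_1$ is a smooth nowhere-vanishing vector field on it, the flow of $e_1$ is complete; let $\gamma:[0,\infty)\to T^2$ be any integral curve with $\gamma'=e_1$. Along $\gamma$ one has $\frac{d}{ds}\big(\alpha\circ\gamma\big)=(e_1\alpha)\circ\gamma\le -c_0$, whence $\alpha(\gamma(s))\le \alpha(\gamma(0))-c_0\,s\to-\infty$ as $s\to\infty$. This contradicts the boundedness of the continuous function $\alpha$ on the compact surface $T^2$, and thereby shows $E_1\neq0$.

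The point requiring the most care---the ``main obstacle''---is the reduction $E_1=0\Rightarrow H_{cr}\equiv0$ together with the global smoothness and boundedness of $\alpha$ and $e_1$; both rest squarely on the absence of singular points, which is exactly where the hypothesis on $T^2$ (and hence the contrast with the CR sphere result of \cite{CHY}) enters. I would also stress that, unlike a naive approach seeking a \emph{closed} characteristic curve, completeness of the flow on the compact torus already suffices, so no control on the recurrence structure of the characteristic foliation is needed; the sign condition $c_0>0$, equivalent to $t<4-\sqrt{15}$, is precisely what forces $\alpha$ to $-\infty$ along the flow and produces the contradiction.
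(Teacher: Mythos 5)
Your proposal is correct and follows essentially the same route as the paper: reduce $E_1=0$ to the pointwise identity $H_{cr}\equiv 0$, use \eqref{10.6} to see that the constant $\frac{1-8t+t^2}{2(1-t^2)}$ is positive for $-1<t<4-\sqrt{15}$, conclude $e_1(\alpha)\le -c_0<0$, and derive a contradiction with the boundedness of $\alpha$ by following the characteristic flow of $e_1$ on the compact nonsingular torus. The only difference is presentational: you spell out the reduction to $H_{cr}\equiv 0$ and phrase the final step via completeness of the flow of the nowhere-vanishing field $e_1$, where the paper simply asserts that the characteristic curve is open and $\alpha$ is unbounded along it.
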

\begin{proof}
If $E_1=0$, then it follows from (\ref{5.2}) and (\ref{10.6}) that 
\begin{equation*}
\begin{split}
e_1(\alpha)&=-\frac{1}{2}\alpha^2+\mbox{Im} A_{11}-\frac{1}{4}W-\frac{1}{6}H^2\\
&=-\frac{1}{2}\alpha^2-\frac{1+8t+t^2}{2(1-t^2)}-\frac{1}{6}H^2\\
&\leq -\frac{1+8t+t^2}{2(1-t^2)}
\end{split}
\end{equation*}
which is negative by assumption on the values of $t$. Therefore, the characteristic curve of $e_1$ is open 
and along the curve $\alpha$ goes to infinity, which contradicts to $\alpha$ being 
bounded on a nonsingular compact surface.   
\end{proof}

Lemma \ref{lem5.1} asserts that he Clifford torus
 $\Sigma_{[\frac{\sqrt{2}}{2}]}$  is a critical point for $E_1$ 
 with zero energy
 in the Rossi sphere $S^3_{t}$, when $t=-4+\sqrt{15}$. 
 For the other values of $t$, we have the following: 

\begin{lem}\label{lem5.2}
Suppose that $t\neq -4+\sqrt{15}$. 
Then the Clifford torus
 $\Sigma_{[\frac{\sqrt{2}}{2}]}$ 
in the Rossi sphere $S^3_{t}$
is a critical point for the energy $E_1$ with nonzero energy.
\end{lem}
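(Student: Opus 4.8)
The plan is to apply the general Euler--Lagrange equation for $E_1$ from Theorem \ref{thm5.1}, because the Rossi sphere has constant Webster curvature but nonvanishing (though constant and purely imaginary) torsion, so the simplified criterion (\ref{EE_for_E1}) does not apply. The guiding observation is that on the Clifford torus $\Sigma_{\sqrt2/2}$ both defining data $\alpha$ and $H$ vanish identically, which should force every term of $\mathcal{E}_1$ to collapse.

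First I would record the geometric data. Since the Reeb field $T$ is tangent to $\Sigma_{\sqrt2/2}$, one has $\alpha\equiv 0$ by (\ref{10.6A}); and since $\rho_1=\rho_2$ the bracket in (\ref{10.21}) is multiplied by $\rho_2^2-\rho_1^2=0$, so $H\equiv 0$ on the whole torus, whence also $e_1(H)=0$. Together with (\ref{10.6}), which says $W$ is constant and $A_{11}=4it/(1-t^2)$ is constant and purely imaginary (so $\mathrm{Re}\,A^1_{\bar1}=0$), formula (\ref{10.22}) gives $H_{cr}=(1-8t+t^2)/(2(1-t^2))$, a nonzero constant precisely when $t\neq 4-\sqrt{15}$.

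Next I would show $\mathfrak{f}=0$ and $9h_{00}+6h_{11}h_{10}+\frac23 h_{11}^3=0$. Inspecting (\ref{5.9}) for $|H_{cr}|\mathfrak{f}$, every summand carries an explicit factor of $\alpha$ or $H$ except the first, whose second factor is $e_1(H)-2\alpha H+3\,\mathrm{Re}\,A^1_{\bar1}=0$, and the term $\frac32(T+\alpha e_2)(e_1(\alpha)+\frac12\alpha^2-\mathrm{Im}\,A_{11}+\frac14 W)$, whose argument reduces to the constant $H_{cr}$ and is therefore killed by $T+\alpha e_2=T$; hence $|H_{cr}|\mathfrak{f}=0$, and $\mathfrak{f}=0$ since $H_{cr}\neq 0$. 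In (\ref{5.10}) the same vanishing of $\alpha$ and $H$ annihilates every summand but $9\,\mathrm{Im}(\frac16 W^{,1}+\frac{2i}{3}(A^{11})_{,1})$.

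The main obstacle is exactly this surviving term: I must verify that the horizontal covariant derivatives $W^{,1}$ and $(A^{11})_{,1}$ vanish. For the scalar $W$ this is immediate, as $W$ is constant. For the torsion I would argue from the structure equation $DA_{11}=dA_{11}-2A_{11}\omega_1^1$: since $A_{11}$ is constant in the admissible frame, $dA_{11}=0$, and by (\ref{10.6}) the connection form $\omega_1^1$ is a pure multiple of $\theta$, so $DA_{11}$ has only a $\theta$-component, forcing $A_{11,1}=A_{11,\bar1}=0$ and hence $(A^{11})_{,1}=0$ by conjugation. With both horizontal derivatives gone, (\ref{5.10}) vanishes; and since $|H_{cr}|^{1/2}$ is constant, all three pieces of (\ref{5.1}) vanish, giving $\mathcal{E}_1=0$. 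Finally, because $\Sigma_{\sqrt2/2}$ is compact and nonsingular (the Reeb field $T$ is everywhere transverse to $\xi$, so $T_p\Sigma$ never equals $\xi_p$) and $|H_{cr}|^{3/2}$ is a nonzero constant, one gets $E_1(\Sigma_{\sqrt2/2})=|H_{cr}|^{3/2}\int_\Sigma\theta\wedge e^1>0$, so the critical point carries nonzero energy.
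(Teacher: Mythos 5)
Your proof is correct and follows essentially the same route as the paper: specialize the general Euler--Lagrange expression of Theorem \ref{thm5.1} to the Clifford torus using $\alpha\equiv 0$ and $H\equiv 0$ (hence vanishing of all tangential derivatives of $H$ and of the constant $H_{cr}$), conclude $\mathcal{E}_1=0$, and then observe that $E_1=\int|H_{cr}|^{3/2}\,\theta\wedge e^1>0$ since $1-8t+t^2\neq 0$ for $t\neq 4-\sqrt{15}$. Your explicit verification that $(A^{11})_{,1}=0$ via $dA_{11}=0$ and $\omega_1^1$ being a multiple of $\theta$ is a detail the paper passes over silently in \eqref{10.28}, and is a welcome addition.
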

\begin{proof}
It follows from  (\ref{5.2}), (\ref{5.9}),
(\ref{5.10}), (\ref{10.6}) and (\ref{10.6A})
that 
\begin{equation}\label{10.28}
\begin{split}
H_{cr}&=-\mbox{Im} A_{11}+\frac{1}{4}W+\frac{1}{6}H^2=\frac{1+8t+t^2}{2(1-t^2)}+\frac{1}{6}H^2,\\
|H_{cr}|\mathfrak{f}&=\left(\frac{1}{3}H^2-\mbox{Im} A_{11}+\frac{1}{4}W\right)e_1(H)+HT(H)\\
&=\left(\frac{1}{3}H^2+\frac{1+8t+t^2}{2(1-t^2)}\right)e_1(H)+HT(H),\\
9h_{00}+6h_{11}h_{10}+\frac{2}{3}h_{11}^3&=-6H\mbox{Im} A_{11}+\frac{3}{2}HW+\frac{2}{3}H^3
=\frac{6(1+8t+t^2)}{2(1-t^2)}H+\frac{2}{3}H^3.
\end{split}
\end{equation}
Note that it follows from (\ref{10.21}) that 
\begin{equation}\label{10.31}
H=0~~\mbox{ whenever }\rho_1=\rho_2=\frac{\sqrt{2}}{2}.
\end{equation}
On the other hand, from (\ref{10.3}) and (\ref{10.8}), 
we see that $e_1$ and $T$ only involve derivatives
with respect to $\varphi_1$ and $\varphi_2$. 
In particular, it follows from (\ref{10.21}) that 
\begin{equation*}
\begin{split}
e_1(H)&=-\frac{\rho_2^2-\rho_1^2}{\rho_2\rho_1}e_1\left[|a|-\frac{t}{1-t^2}\left(4|a|+\frac{(\overline{a}^2-a^2)^2}{2|a|^3}\right)\right],\\
T(H)&=-\frac{\rho_2^2-\rho_1^2}{\rho_2\rho_1}T\left[|a|-\frac{t}{1-t^2}\left(4|a|+\frac{(\overline{a}^2-a^2)^2}{2|a|^3}\right)\right],
\end{split}
\end{equation*}
which implies that 
\begin{equation}\label{10.30}
\begin{split}
e_1(H)=0~~\mbox{ and }~~
T(H)=0~~\mbox{ whenever }\rho_1=\rho_2=\frac{\sqrt{2}}{2}. 
\end{split}
\end{equation}
Similarly, we can see that 
\begin{equation}\label{10.30A}
X_1X_2\cdots X_n(H)=0~~\mbox{ whenever }\rho_1=\rho_2=\frac{\sqrt{2}}{2},
\end{equation}
as long as the vector fields $X_j=e_1$ or $T$ for all $1\leq j\leq n$. 
Combining 
(\ref{5.1}) and (\ref{10.31})-(\ref{10.30A}), we conclude that 
$\mathcal{E}_1=0$ 
whenever $\rho_1=\rho_2=\frac{\sqrt{2}}{2}$. 
Hence, it follows from Theorem \ref{thm5.1} that the Clifford torus
 $\Sigma_{[\frac{\sqrt{2}}{2}]}$ 
in the Rossi sphere $S^3_{t}$
is a critical point for the functional $E_1$. 
On the other hand, 
it follows from (\ref{10.17}), (\ref{10.28}) and (\ref{10.31})
that 
\begin{equation*}
\begin{split}
E(\Sigma_{[\frac{\sqrt{2}}{2}]})
&=\int_{\Sigma_{[\frac{\sqrt{2}}{2}]}}dA_1=\int_{\Sigma_{[\frac{\sqrt{2}}{2}]}}|H_{cr}|^{\frac{3}{2}}\theta\wedge e^1=\int_{\Sigma_{[\frac{\sqrt{2}}{2}]}}\left|\frac{1+8t+t^2}{2(1-t^2)}\right|^{\frac{3}{2}}\rho_1\rho_2
\end{split}
\end{equation*}
which is positive by the assumption on the values of $t$. 
This proves the assertion. 
\end{proof}

\subsection{The functional $E_2$ on the Rossi sphere.}

We still consider 
a closed surface $\Sigma_{[c]}$ in the Rossi sphere $S^3_t$ defined by 
$\rho_1=c$, 
where $z_j=\rho_j e^{i\varphi_j}$ for $j=1,2$. 
It follows from (\ref{10.6}), (\ref{10.6A}), (\ref{10.17})
and (\ref{10.21})
that 
\begin{equation}\label{10.27}
\begin{split}
E_2(\Sigma_{[c]})&=\int_{\Sigma_{[c]}}dA_2=\int_{\Sigma_{[c]}}\left[\frac{2}{3}\left(-\mbox{Im} A_{11}+\frac{1}{4}W\right)H+\frac{2}{27}H^3\right]\theta\wedge e^1\\
&=\int_{\Sigma_{[c]}}\left(\frac{1+8t+t^2}{3(1-t^2)}H+\frac{2}{27}H^3\right)\rho_1\rho_2d\varphi_1\wedge d\varphi_2\\
&=(\rho_1^2-\rho_2^2)\int_{\Sigma_{[c]}}\left(\frac{1+8t+t^2}{3(1-t^2)}+\frac{2}{27}H^2\right)\\
&\hspace{2cm}\cdot
\left[|a|-\frac{t}{1-t^2}\left(4|a|+\frac{(\overline{a}^2-a^2)^2}{2|a|^3}\right)\right]d\varphi_1\wedge d\varphi_2,
\end{split}
\end{equation}
where we have used the fact that $\rho_1=c$
and $\rho_2=\sqrt{1-c^2}$ on $\Sigma_{[c]}$. 
It follows from (\ref{10.12}) that 
\begin{equation}\label{10.23}
|a|^2=\frac{1+t^2+2t\cos\big(2(\varphi_1+\varphi_2)\big)}{1-t^2}~~\mbox{ and }
~~\overline{a}^2-a^2=2i\sin\big(2(\varphi_1+\varphi_2)\big).
\end{equation}
Since $|t|<1$, we can deduce from (\ref{10.23}) that 
\begin{equation}\label{10.24}
\frac{1+|t|}{1-|t|}\geq|a|^2\geq\frac{1-|t|}{1+|t|}. 
\end{equation}
From (\ref{10.23}), we find 
\begin{equation*}
4|a|+\frac{(\overline{a}^2-a^2)^2}{2|a|^3}
=\frac{8|a|^4+(\overline{a}^2-a^2)^2}{2|a|^3}=\frac{8|a|^4-4\sin^2\big(2(\varphi_1+\varphi_2))^2}{2|a|^3}.
\end{equation*}
This together with (\ref{10.24}) implies that
\begin{equation}\label{10.25}
 4|a|\geq 4|a|+\frac{(\overline{a}^2-a^2)^2}{2|a|^3}
  \geq\frac{4|a|^4-2}{|a|^3}\geq \frac{2(t^2-5|t|+1)}{|a|^3(1+|t|)^2}.
\end{equation}
From (\ref{10.24}) and (\ref{10.25}), we have
\begin{equation*}
\begin{split}
&|a|-\frac{t}{1-t^2}\left(4|a|+\frac{(\overline{a}^2-a^2)^2}{2|a|^3}\right)
\geq |a|- \frac{2t(t^2-5|t|+1)}{|a|^3(1+|t|)^2(1-t^2)}\\
&=\frac{|a|^4(1+|t|)^2(1-t^2)-2t(t^2-5|t|+1)}{|a|^3(1+|t|)^2(1-t^2)}\\
&\geq\frac{(1-|t|)^2(1-t^2)-2t(t^2-5|t|+1)}{|a|^3(1+|t|)^2(1-t^2)}~~\mbox{ whenever }t\leq 0.
\end{split}
\end{equation*}
Hence, we can conclude that 
\begin{equation}\label{10.26}
\begin{split}
&|a|-\frac{t}{1-t^2}\left(4|a|+\frac{(\overline{a}^2-a^2)^2}{2|a|^3}\right)>0\\
&\hspace{8mm}\mbox{ whenever } (1+t)^3(1-t)>2t(t^2+5t+1)\mbox{ and }t\leq 0. 
\end{split}
\end{equation}
Similarly, from (\ref{10.24}) and (\ref{10.25}), we have
\begin{equation*}
\begin{split}
&|a|-\frac{t}{1-t^2}\left(4|a|+\frac{(\overline{a}^2-a^2)^2}{2|a|^3}\right)\\
&\geq |a|-\frac{4t}{1-t^2}|a|
=\frac{1-4t-t^2}{1-t^2}|a|~~\mbox{ whenever }t\geq 0.
\end{split}
\end{equation*}
Hence, we can conclude that 
\begin{equation}\label{10.27}
|a|-\frac{t}{1-t^2}\left(4|a|+\frac{(\overline{a}^2-a^2)^2}{2|a|^3}\right)>0\mbox{ whenever } 1-4t-t^2>0\mbox{ and }t\geq 0. 
\end{equation}
Under the conditions in (\ref{10.26}) and (\ref{10.27}), 
it follows from (\ref{10.21}) that 
$H\to +\infty$ ($-\infty$ respectively) as $\rho_1\to 1$ ($\rho_1\to 0$ respectively). 
Combining this fact
with (\ref{10.26}) and (\ref{10.27}) , 
we conclude that 
$E_2(\Sigma_{[c]})$ tends to $+\infty$ ($-\infty$ respectively) as $\rho_1\to 1$ ($\rho_1\to 0$ respectively). 
From this, we prove the following: 

\begin{lem}
The functional $E_2$ is unbounded from below and above in the Rossi sphere 
$S^3_t$, if one of the following conditions holds:\\
(i) $(1+t)^3(1-t)>2t(t^2+5t+1)$ and $t\leq 0$,\\
(ii) $ 1-4t-t^2>0$ and $t\geq 0$.
\end{lem}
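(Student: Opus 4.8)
The plan is to test the functional against the explicit one-parameter family of level tori $\Sigma_c=\{\rho_1=c\}$, $c\in(0,1)$, and to show that $E_2(\Sigma_c)\to+\infty$ as $c\to1^-$ while $E_2(\Sigma_c)\to-\infty$ as $c\to0^+$. Every ingredient needed has already been assembled above, so the proof amounts to reading off the asymptotics of the integral formula for $E_2(\Sigma_c)$ derived just before the statement.

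First I would record that, under hypothesis (i) or (ii), the estimates leading to (\ref{10.26}) and to its $t\ge0$ counterpart show that the bracket
$$B:=|a|-\frac{t}{1-t^2}\left(4|a|+\frac{(\overline{a}^2-a^2)^2}{2|a|^3}\right)$$
is strictly positive. Since (\ref{10.24}) confines $|a|^2$ to the compact interval $\left[\tfrac{1-|t|}{1+|t|},\tfrac{1+|t|}{1-|t|}\right]$ uniformly in $(\varphi_1,\varphi_2)$, the very same estimates yield a uniform lower bound $B\ge B_0>0$ over the whole torus. This uniformity is the point on which the argument turns.

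Next I would invoke the mean-curvature formula (\ref{10.21}), which reads $H=\frac{\rho_1^2-\rho_2^2}{\rho_1\rho_2}\,B$, so that $H^2=\left(\frac{\rho_1^2-\rho_2^2}{\rho_1\rho_2}\right)^2B^2$. Because the geometric prefactor $\frac{\rho_1^2-\rho_2^2}{\rho_1\rho_2}=\frac{2c^2-1}{c\sqrt{1-c^2}}$ tends to $\pm\infty$ as $c\to1^-$ or $c\to0^+$ while $B$ stays bounded below by $B_0$, the quantity $H^2$ blows up uniformly in $(\varphi_1,\varphi_2)$. Hence the integrand $\bigl(\frac{1-8t+t^2}{3(1-t^2)}+\frac{2}{27}H^2\bigr)B$ appearing in the $E_2(\Sigma_c)$ formula is eventually dominated by its nonnegative $\frac{2}{27}H^2B$-term and tends uniformly to $+\infty$; integrating over the finite-measure torus $(\varphi_1,\varphi_2)\in[0,2\pi)^2$ makes the integral diverge to $+\infty$ at both endpoints.

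Finally, the overall sign of $E_2(\Sigma_c)$ is carried by the scalar prefactor $\rho_1^2-\rho_2^2=2c^2-1$. As $c\to1^-$ this prefactor tends to $+1$, giving $E_2(\Sigma_c)\to+\infty$; as $c\to0^+$ it tends to $-1$, giving $E_2(\Sigma_c)\to-\infty$. Thus $E_2$ assumes arbitrarily large positive and negative values on $\{\Sigma_c\}$, which is exactly the asserted unboundedness from above and below. The only genuinely delicate step is the passage from the pointwise blow-up of $H^2$ to the blow-up of the integral, and that is secured by the uniform lower bound $B\ge B_0$; the remainder is bookkeeping of signs in the formulas already in hand.
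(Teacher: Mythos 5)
Your proposal is correct and follows essentially the same route as the paper: both arguments use the positivity estimates for the bracket $B$ under hypotheses (i) and (ii), observe from the formula for $H$ on $\Sigma_c$ that $H\to+\infty$ as $\rho_1\to1$ and $H\to-\infty$ as $\rho_1\to0$, and then read off the divergence of $E_2(\Sigma_c)$ from the integral formula, with the sign carried by the prefactor $\rho_1^2-\rho_2^2$. Your explicit remark that the uniform lower bound $B\ge B_0>0$ (via the confinement of $|a|^2$ to a compact interval) is what licenses passing from pointwise blow-up of $H^2$ to divergence of the integral is a welcome clarification of a step the paper leaves implicit.
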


As mentioned before, it follows
from (\ref{10.6}) that the Rossi sphere $S^3_t$ has constant Webster scalar curvature and 
its torsion is constant and purely imaginary.
In particular, Theorem \ref{thm7.1} is applicable to 
the Rossi sphere $S^3_t$.

For the Clifford torus $\Sigma_{[\frac{\sqrt{2}}{2}]}$
in $S^3_t$, we have 
\begin{equation}\label{10.32}
\alpha\equiv 0~~\mbox{ and }~~H\equiv 0
\end{equation}
by (\ref{10.6A}) and (\ref{10.31}).
Substituting (\ref{10.6}) and (\ref{10.32}) into 
(\ref{7.34}), we find
\begin{equation*}
\begin{split}
\frac{9}{4}\mathcal{E}_2&=\frac{3}{4}W^2
+3(\mbox{Im}A_{11})^2-\frac{15}{4}W\mbox{Im}A_{11}\\
&=\frac{3}{(1-t^2)^2}(t^4+10t^3+18t^2+10t+1)\\
&=\frac{3}{(1-t^2)^2}(t+1)^2(t^2+8t+1)
=\frac{3}{(1-t)^2}(t^2+8t+1),
\end{split}
\end{equation*}
where we have used the assumption that $|t|<1$. 
In particular, since $|t|<1$, 
we see that $\mathcal{E}_2=0$ 
if and only if $t=-4+\sqrt{15}$. 
Therefore,
we can conclude from Theorem \ref{thm7.1} 
the following: 

\begin{lem}\label{lem5.4}
The Clifford torus
 $\Sigma_{[\frac{\sqrt{2}}{2}]}$ 
in the Rossi sphere $S^3_{t}$
is a critical point for the energy $E_2$ 
if and only if $t=-4+\sqrt{15}$.
\end{lem}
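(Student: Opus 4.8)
The plan is to reduce criticality to an algebraic condition via Theorem \ref{thm7.1}. Because the Rossi sphere $S^3_t$ has constant Webster curvature and constant, purely imaginary torsion in the sense of \eqref{7.1} (recorded in \eqref{10.6}), Theorem \ref{thm7.1} applies, so the Clifford torus $\Sigma_{\frac{\sqrt{2}}{2}}$ is a critical point of $E_2$ precisely when the associated quantity $\mathcal{E}_2$, given by \eqref{7.34}, vanishes identically on the surface. Thus the whole problem collapses to evaluating $\mathcal{E}_2$ on $\Sigma_{\frac{\sqrt{2}}{2}}$.

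First I would feed in the vanishing data $\alpha\equiv 0$ and $H\equiv 0$ from \eqref{10.32}. Every summand in \eqref{7.34} carrying an explicit factor of $\alpha$ or $H$ then disappears, as does $3e_1(\alpha)^2$, since $e_1$ is tangent to $\Sigma$ and $\alpha\equiv 0$ there forces $e_1(\alpha)=0$. The terms that need genuine attention are the derivative terms $He_1e_1(H)$, $3e_1V(H)$, $e_1(H)^2$, and $\alpha He_1(H)$: on the Clifford torus $\alpha=0$ gives $V=T$, so these become iterated $e_1$- and $T$-derivatives of $H$, and \eqref{10.30A} shows all such derivatives vanish on $\Sigma_{\frac{\sqrt{2}}{2}}$. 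Hence the entire differential part of $\mathcal{E}_2$ is zero, and $\mathcal{E}_2$ reduces to the purely pointwise expression
$$\frac{9}{4}\mathcal{E}_2=\frac{3}{4}W^2+3(\mbox{Im}A_{11})^2-\frac{15}{4}W\mbox{Im}A_{11}.$$

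Finally I would substitute the Rossi-sphere values $W=\frac{2(1+t^2)}{1-t^2}$ and $\mbox{Im}A_{11}=\frac{4t}{1-t^2}$ from \eqref{10.6}, put everything over the common denominator $(1-t^2)^2$, and factor the numerator. The quartic factors as $(t-1)^2(t^2-8t+1)$, and cancelling $(t-1)^2$ against $(1-t^2)^2=(1-t)^2(1+t)^2$ leaves $\frac{4}{3(1+t)^2}(t^2-8t+1)$. Since $|t|<1$ keeps $1+t\neq 0$, the vanishing of $\mathcal{E}_2$ is equivalent to $t^2-8t+1=0$, whose only root in $(-1,1)$ is $t=4-\sqrt{15}$, which gives the stated ``if and only if.'' The main obstacle is not the final algebra but the bookkeeping in the middle step: one must be confident that no derivative term in the long formula \eqref{7.34} secretly survives, and this rests entirely on the two facts that $\alpha\equiv 0$ turns $V$ into $T$ and that \eqref{10.30A} kills every $e_1,T$-derivative of $H$ on the Clifford torus.
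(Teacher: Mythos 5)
Your proposal is correct and follows essentially the same route as the paper: apply Theorem \ref{thm7.1} (legitimate since \eqref{10.6} gives constant Webster curvature and constant purely imaginary torsion), kill every term of \eqref{7.34} on the Clifford torus using $\alpha\equiv 0$, $H\equiv 0$ and the vanishing of all iterated $e_1,T$-derivatives of $H$ from \eqref{10.30A}, and reduce to $\frac{3}{4}W^2+3(\mathrm{Im}A_{11})^2-\frac{15}{4}W\,\mathrm{Im}A_{11}$, whose numerator factors as $(t-1)^2(t^2-8t+1)$ with unique root $t=4-\sqrt{15}$ in $(-1,1)$. Your explicit justification of why the derivative terms vanish is, if anything, slightly more careful than the paper's one-line substitution.
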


In particular,  according to Lemma \ref{lem5.1} and Lemma \ref{lem5.4}, 
the Clifford torus
 $\Sigma_{[\frac{\sqrt{2}}{2}]}$ in the Rossi sphere 
 $S^3_{t_0}$, where $t_0=-4+\sqrt{15}$, 
 is a critical point for both the area functionals $E_1$ and $E_2$. 
Moreover, when $t\neq -4+\sqrt{15}$, 
according to Lemma \ref{lem5.2} and Lemma \ref{lem5.4}, 
the Clifford torus $\Sigma_{[\frac{\sqrt{2}}{2}]}$ in the Rossi sphere $S^3_t$ 
 is a critical point for $E_1$, but not a critical point for $E_2$.

\section{Second variation of $E_1$ for the Clifford torus in the Rossi sphere}\label{section10}

As in Section \ref{section6}, we assume that the pseudohermitian $3$-manifold $(M,J,\theta)$
has constant Webster scalar curvature and 
its torsion is constant and purely imaginary. That is to say,
(\ref{7.1}) is satisfied. We will assume that (\ref{7.1}) holds throughout this section. 
Moreover, as we have mentioned before, the assumption (\ref{7.1}) is satisfied by the Rossi sphere
$S^3_t$ (see (\ref{10.6}). 

Let 
$\Sigma_s=F_s(\Sigma)$
be a family of immersions in $(M,J,\theta)$ such that 
\begin{equation}\label{8.1}
\frac{d}{ds}F_s=X=fe_2+gT.
\end{equation}
Here, we let $e_1$ be the unit vector in $T\Sigma_s\cap\xi$
and $e_1=Je_1$. 
Then the Euler-Lagrange equation for $E_1$ has been derived in Theorem \ref{thm5.1}, i.e., 
$\mathcal{E}_1=0$ with $\mathcal{E}_1$ given in \eqref{5.1}. 

We will see that \eqref{5.1} could be simplified further if 
 $(M,J,\theta)$
has constant Webster scalar curvature and 
its torsion is constant and purely imaginary, i.e. when (\ref{7.1}) holds.  
Indeed, it follows from (\ref{5.2}), (\ref{5.9}) and (\ref{7.1}) that 
\begin{equation}\label{8.2}
\begin{split}
|H_{cr}|\mathfrak{f}
&=e_1(H)\left(H_{cr}+\frac{1}{6}H^2\right)+HV(H)+\frac{3}{2}\left(H_{cr}-\frac{1}{6}H^2\right)\\
&\hspace{4mm}-\frac{7}{2}\alpha H e_1(\alpha)-\frac{5}{2}\alpha^3H-\frac{2}{3}\alpha H^3-\frac{5}{4}\alpha H W
+2\alpha H\mbox{Im} A_{11}\\
&=e_1(H) H_{cr}+\frac{3}{2}V(H_{cr})+\frac{1}{2}H e_1\left(H_{cr}-e_1(\alpha)-\frac{1}{2}\alpha^2\right)+\frac{1}{2}HV(H)\\
&\hspace{4mm}-\frac{7}{2}\alpha H e_1(\alpha)-\frac{5}{2}\alpha^3H-\frac{2}{3}\alpha H^3-\frac{5}{4}\alpha H W
+2\alpha H\mbox{Im} A_{11}.
\end{split}
\end{equation}
Substituting 
(\ref{7.17}) into (\ref{8.2}) 
and using (\ref{5.2}) again, 
we find  
\begin{equation}\label{8.3}
\begin{split}
|H_{cr}|\mathfrak{f}
&=e_1(H) H_{cr}+\frac{3}{2}V(H_{cr})+\frac{1}{2}H e_1(H_{cr})\\
&\hspace{4mm}-\frac{1}{2}H\Big(-5\alpha e_1(\alpha)-\alpha H^2-4\alpha^3-2W\alpha+2\alpha\mbox{Im}A_{11}\Big)\\
&\hspace{4mm}-\frac{7}{2}\alpha H e_1(\alpha)-\frac{5}{2}\alpha^3H-\frac{2}{3}\alpha H^3-\frac{5}{4}\alpha H W
+2\alpha H\mbox{Im} A_{11}\\
&=e_1(H) H_{cr}+\frac{3}{2}V(H_{cr})+\frac{1}{2}H e_1(H_{cr})\\
&\hspace{4mm}
-H\alpha\left(e_1(\alpha)+\frac{1}{2}\alpha^2+\frac{1}{6}H^2+\frac{1}{4}W-\mbox{Im} A_{11}\right)\\
&=e_1(H) H_{cr}+\frac{3}{2}V(H_{cr})+\frac{1}{2}H e_1(H_{cr})
-\alpha H H_{cr}. 
\end{split}
\end{equation}
On the other hand, it follows from (\ref{5.2}) and (\ref{7.1})
that we can rewrite (\ref{5.10}) as 
\begin{equation}\label{8.4}
\begin{split}
&9h_{00}+6h_{11}h_{10}+\frac{2}{3}h_{11}^3\\
&=9V(\alpha) +6He_1(\alpha)+3H\alpha^2-6H\mbox{Im} A_{11}+\frac{3}{2}HW+\frac{2}{3}H^3\\
&=9V(\alpha)+6H H_{cr}-\frac{1}{3}H^3, 
\end{split}
\end{equation}
where $V:=T+\alpha e_2$. 
Substituting (\ref{8.4}) into (\ref{5.1}) yields 
\begin{align}\label{8.5}
\begin{split}
\mathcal{E}_1&=\frac{1}{2}e_1(|H_{cr}|^{1/2}\mathfrak{f})+\frac{3}{2}|H_{cr}|^{1/2}\alpha\mathfrak{f}\\
&\hspace{4mm}+\frac{1}{2}sign(H_{cr})|H_{cr}|^{1/2}\left(9V(\alpha)+6H H_{cr}-\frac{1}{3}H^3\right)\\
&=|H_{cr}|^{-\frac{1}{2}}\Bigg[\frac{3}{2}|H_{cr}|\mathfrak{f}\alpha+\frac{1}{2}H_{cr}\left(9V(\alpha)+6H H_{cr}-\frac{1}{3}H^3\right)\\
&\hspace{8mm}+\frac{1}{2}e_1(|H_{cr}|\mathfrak{f})-\frac{1}{4}sign(H_{cr})\mathfrak{f}e_1(H_{cr})\Bigg].
\end{split}
\end{align}

To conclude, we have the following: 

\begin{theorem}
Suppose that  $(M,J,\theta)$
has constant Webster scalar curvature and 
its torsion is constant and purely imaginary, i.e. when \eqref{7.1} holds.
Let $F_s(\Sigma)$ be given as in \eqref{8.1}. There holds 
\begin{equation}\label{8.6}
\frac{d}{ds}E_1(\Sigma_s)=\frac{d}{ds}\int_{\Sigma_s}dA_1
=\int_{\Sigma_s}\mathcal{E}_1 h \theta\wedge e^1,
\end{equation}
where 
$$dA_1=|H_{cr}|^{\frac{3}{2}}\theta\wedge e^1,~~
H_{cr}=e_1(\alpha)+\frac{1}{2}\alpha^2-\mbox{\emph{Im}} A_{11}+\frac{1}{4}W+\frac{1}{6}H^2,$$
and $\mathcal{E}_1$ is given as in \eqref{8.5}. 
\end{theorem}

\subsection{Second variation of $E_1$.}
Now we assume that $\Sigma$ in $(M,J,\theta)$
satisfies $\mathcal{E}_1(\Sigma)=0$, i.e. 
$\Sigma$ is a critical point of $E_1$, 
and $H_{cr}\neq 0$. 
Let $\Sigma_s=F_s(\Sigma)$
be a family of surfaces in $(M,J,\theta)$ 
with $\Sigma_s\big|_{s=0}=\Sigma$ such that 
(\ref{8.1}) holds. 
As before, we denote $h:=f-\alpha g$ and $V:=T+\alpha e_2$. 
Then by (\ref{8.5}) and (\ref{8.6}), 
the second variation of $E_1$ for $\Sigma$ is given by 
\begin{equation}\label{8.7}
\begin{split}
\left.\frac{d^2}{ds^2}E_1(\Sigma_s)\right|_{s=0} &=
\int_\Sigma|H_{cr}|^{-\frac{1}{2}}
\frac{d}{ds}\Bigg[-\frac{1}{4}|H_{cr}|\mathfrak{f}\frac{1}{H_{cr}}e_1(H_{cr})+\frac{1}{2}e_1(|H_{cr}|\mathfrak{f})\\
&\hspace{4mm}+\frac{3}{2}|H_{cr}|\mathfrak{f}\alpha+H_{cr}\left(\frac{9}{2}V(\alpha)+3H H_{cr}-\frac{1}{6}H^3\right)\Bigg] h\theta\wedge e^1.
\end{split}
\end{equation}
We need some lemmas to compute the above second variation of $E_1$
for the Clifford torus in the Rossi sphere.

First, we have 
\begin{equation}\label{8.8}
[e_1,V]=-\alpha H e_1-2\alpha V.
\end{equation}
To see this, we compute
\begin{align*}
[e_1,V]
&=[e_1,T+\alpha e_2]=[e_1,T]+\alpha[e_1,e_2]+e_1(\alpha)e_2\\
&=-(\mbox{Im}A_{11}+\omega(T))e_2
-\alpha\big(2T+\omega(e_1)e_1+\omega(e_2)e_2\big)+e_1(\alpha)e_2\\
&=-\big(e_1(\alpha)-\alpha h^{-1}e_1(h)\big)e_2
-\alpha\big(2T+He_1+(h^{-1}e_1(h)+2\alpha)e_2\big)+e_1(\alpha)e_2\\
&=-\alpha He_1-2\alpha V,
\end{align*}
where we have used 
(\ref{7.2}), (\ref{7.10}), (\ref{7.11}), 
and (\ref{7.9}). 

\begin{lem}\label{lem7.5} We have 
\begin{align*}
\frac{d}{ds}V(\alpha)
&=V\left(\frac{d\alpha}{ds}\right)+h e_1(\alpha)e_1(\alpha)-\alpha e_1(h)e_1(\alpha)-2h\mbox{\emph{Im}}A_{11}e_1(\alpha)
-V(g)V(\alpha)
\end{align*}
and 
\begin{align*}
\frac{d}{ds}\big(e_1(|H_{cr}|\mathfrak{f})\big)
&=e_1\left(\frac{d}{ds}(|H_{cr}|\mathfrak{f})\right)+
\big(fHe_1 +2fV-e_1(g)V\big)(|H_{cr}|\mathfrak{f}).
\end{align*}
where 
\begin{align*}
\frac{d}{ds}(|H_{cr}|\mathfrak{f})
&= H_{cr}\frac{d}{ds}\big(e_1(H)\big)
+e_1(H)\frac{d}{ds}H_{cr}+\frac{3}{2}\frac{d}{ds}V(H_{cr})\\
&\hspace{4mm}+\frac{1}{2}e_1(H_{cr})\frac{dH}{ds}+\frac{1}{2}H\frac{d}{ds}\big(e_1(H_{cr})\big)\\
&\hspace{4mm}-\frac{d\alpha}{ds} H H_{cr}-\alpha\frac{dH}{ds}H_{cr}
-\alpha H\frac{d}{ds}H_{cr}.
\end{align*}
Here, 
\begin{align*}
\frac{d}{ds}e_1(H)
&=e_1\Bigg(e_1e_1(h)+2\alpha e_1(h)+4he_1(\alpha)+H^2h+4\alpha^2h+2Wh-2h\mbox{\emph{Im}}A_{11}\\
&\hspace{8mm}
-g\big(
2\alpha\mbox{\emph{Im}}A_{11}+h^{-1}e_1(h)\mbox{\emph{Im}}A_{11}-\mbox{\emph{Im}}A_{11}\big)\Bigg)\\
&\hspace{4mm}
+ge_1V(H)+fHe_1(H) +2fV(H)\\
\frac{d}{ds}V(H_{cr})
&=V\left(\frac{d}{ds}H_{cr}\right)+
h e_1(\alpha)e_1(H_{cr})-\alpha e_1(h)e_1(H_{cr})\\
&\hspace{4mm}-2h\mbox{\emph{Im}}A_{11}e_1(H_{cr})
-V(g)V(H_{cr})
\end{align*}
and 
\begin{align*}
&\frac{d}{ds}H_{cr}\\
&=e_1V(h)+\alpha V(h)+\frac{1}{3}H\big(e_1e_1(h)+2\alpha e_1(h)\big)\\
&\hspace{4mm}+\frac{4}{3}H\left(e_1(\alpha)+\alpha^2+\frac{1}{4}H^2+\frac{1}{2}W-\frac{1}{2}\mbox{\emph{Im}}A_{11}\right)h
+\big(2V(\alpha)+He_1(\alpha)\big)f\\
&\hspace{4mm}+\left(e_1V(\alpha)+\alpha V(\alpha)
+\frac{1}{3}H\big(V(H)-2\alpha\mbox{\emph{Im}}A_{11}-h^{-1}e_1(h)\mbox{\emph{Im}}A_{11}+\mbox{\emph{Im}}A_{11}\big)\right)g.
\end{align*}
\end{lem}

The proof is similar to that in \cite[Lemma 5.5]{CCYZ} 
with the torsion terms added. For the reader's convenience, we give the complete proof in the Appendix.

\subsection{The Clifford torus in the Rossi sphere.}
As in Section \ref{section8}, 
 we consider 
a closed surface $\Sigma_{[c]}$ defined by 
$\rho_1=c$ in the Rossi sphere $S^3_t$,
where 
$$z_1=\rho_1 e^{i\varphi_1},~~ z_2=\rho_2 e^{i\varphi_2}~~\mbox{ and }~~
\rho_1^2+\rho_2^2=1.$$
The Clifford torus is the surface $\Sigma_{[\frac{\sqrt{2}}{2}]}$, i.e. 
when $\rho_1=\rho_2=\frac{\sqrt{2}}{2}$.

We have already shown in Lemma \ref{lem5.1} that 
the Clifford torus
 $\Sigma_{[\frac{\sqrt{2}}{2}]}$ 
in the Rossi sphere $S^3_{t}$, where 
$t=-4+\sqrt{15}$, 
is a minimizer for the energy $E_1$ 
 with zero energy.    
Therefore, we are going to assume that $t\neq -4+\sqrt{15}$. 
It follows from Lemma \ref{lem5.2}
that the Clifford torus
 $\Sigma_{[\frac{\sqrt{2}}{2}]}$ 
in the Rossi sphere $S^3_{t}$
is a critical point for the energy $E_1$ with nonzero energy.

We are going to apply (\ref{8.7}) to compute the second variation 
of the Clifford torus $\Sigma_{[\frac{\sqrt{2}}{2}]}$ 
in the Rossi sphere $S^3_{t}$, where $t\neq -4+\sqrt{15}$.

\begin{theorem}\label{thm8.1}
Let $\Sigma_{[\frac{\sqrt{2}}{2}]}$ be the Clifford torus 
in the Rossi sphere $S^3_{t}$, where $t\neq -4+\sqrt{15}$.
If $f$ or $g$ are supported in a domain of $\Sigma_{[\frac{\sqrt{2}}{2}]}$
away from the singular set of  $\Sigma_{[\frac{\sqrt{2}}{2}]}$, there holds 
\begin{equation}\label{8.18}
\begin{split}
&\left.\frac{d^2}{ds^2}E_1(\Sigma_s)\right|_{s=0}\\ &=
\sqrt{\left|\frac{2(1-t^2)}{1+8t+t^2}\right|}\int_{\Sigma_{[\frac{\sqrt{2}}{2}]}}
\Bigg[\frac{1+8t+t^2}{4(1-t^2)}\left(e_1e_1(h)^2-\frac{4(1-t)}{1+t}e_1(h)^2\right)
+\frac{3}{2}Te_1(h)^2\\
&\hspace{8mm}+\frac{9(1+8t+t^2)}{4(1-t^2)}hTT(h)
+\frac{3(1+8t+t^2)^2}{4(1-t^2)^2}\left(-e_1(h)^2+\frac{4(1-t)}{1+t}h^2\right)
\Bigg]\theta\wedge e^1\\
&\hspace{4mm}-\sqrt{\left|\frac{2(1-t^2)}{1+8t+t^2}\right|} \int_{\Sigma_{[\frac{\sqrt{2}}{2}]}}
\Bigg[\frac{1}{2}H_{cr}e_1e_1\Big(g\big(h^{-1}e_1(h)\mbox{\emph{Im}}A_{11}-\mbox{\emph{Im}}A_{11}\big)\Big)\\
&\hspace{8mm}
+3H_{cr}^2g\big(h^{-1}e_1(h)\mbox{\emph{Im}}A_{11}-\mbox{\emph{Im}}A_{11}\big)\Bigg]h\theta\wedge e^1,
\end{split}
\end{equation}
where $h=f-\alpha g$. 
\end{theorem}

We remark that Theorem \ref{thm8.1} extends 
\cite[Theorem 1.3]{CCYZ}. Indeed, 
\cite[Theorem 1.3]{CCYZ} is the special case of Theorem \ref{thm8.1}
when $t=0$. 
 
\begin{proof}[Proof of Theorem \ref{thm8.1}]
On the Clifford torus $\Sigma_{[\frac{\sqrt{2}}{2}]}$, it follows from  (\ref{10.6A}), (\ref{10.8}), (\ref{10.14A}), (\ref{10.31})
and (\ref{10.30A}) that 
\begin{equation}\label{8.9}
\begin{split}
&\alpha=0,~~e_1=\frac{1}{|a|}\left(-\frac{\partial}{\partial\varphi_1}+\frac{\partial}{\partial\varphi_2}\right),
~~H=0,\\
&X_1X_2\cdots X_n(H)=0,
\end{split}
\end{equation}
as long as the vector fields $X_j=e_1$ or $T$ for all $1\leq j\leq n$. 
From this and the fact that $V=T-\alpha e_2$, we have 
\begin{equation}\label{8.10}
V=T
\end{equation}
 on the Clifford torus $\Sigma_{[\frac{\sqrt{2}}{2}]}$. 
It also follows from (\ref{8.8}) and (\ref{8.9}) that
\begin{equation}\label{8.11}
[e_1,V]=0. 
\end{equation}
Moreover, it follows from (\ref{10.28}), (\ref{8.3}) and (\ref{8.9}) that 
on the Clifford torus $\Sigma_{[\frac{\sqrt{2}}{2}]}$
\begin{equation}\label{8.12}
\begin{split}
H_{cr}&=\frac{1+8t+t^2}{2(1-t^2)},\\
|H_{cr}|\mathfrak{f}&=e_1(H) H_{cr}+\frac{3}{2}V(H_{cr})+\frac{1}{2}H e_1(H_{cr})
-\alpha H H_{cr}=0.
\end{split}
\end{equation}
Substituting (\ref{8.9})-(\ref{8.12}) into 
(\ref{8.7}), we find that the second variation of $E_1$ at $\Sigma_s\big|_{s=0}=\Sigma_{[\frac{\sqrt{2}}{2}]}$, 
the Clifford torus, is given by 
\begin{equation}\label{8.13}
\begin{split}
&\left.\frac{d^2}{ds^2}E_1(\Sigma_s)\right|_{s=0}\\ &=
|H_{cr}|^{-\frac{1}{2}}\int_{\Sigma_{[\frac{\sqrt{2}}{2}]}}
\Bigg[\frac{1}{2}\frac{d}{ds}e_1(|H_{cr}|\mathfrak{f})
+\frac{9}{2}H_{cr}\frac{d}{ds}V(\alpha)+3H_{cr}^2\frac{dH}{ds}\Bigg]h\theta\wedge e^1.
\end{split}
\end{equation}
From (\ref{7.22}) and (\ref{8.9})-(\ref{8.12}), 
we find
\begin{equation}\label{8.14}
\begin{split}
\frac{dH}{ds}
&=e_1e_1(h) +2h\big( W- \mbox{Im}A_{11}\big)
-g\big(h^{-1}e_1(h)\mbox{Im}A_{11}-\mbox{Im}A_{11}\big).
\end{split}
\end{equation}
Similarly, from (\ref{7.23}), (\ref{8.9})-(\ref{8.12}) and Lemma \ref{lem7.5}, 
we have 
\begin{align}\label{8.15}
\frac{d}{ds}V(\alpha)
&=V\left(\frac{d\alpha}{ds}\right)
=VV(h).
\end{align}
Furthermore, from (\ref{8.9})-(\ref{8.12}) and Lemma \ref{lem7.5}, 
we have 
\begin{equation}\label{8.16}
\begin{split}
&\frac{d}{ds}\big(e_1(|H_{cr}|\mathfrak{f})\big)\\
&=e_1\left(\frac{d}{ds}(|H_{cr}|\mathfrak{f})\right)=e_1\left(H_{cr}\frac{d}{ds}\big(e_1(H)\big)+\frac{3}{2}\frac{d}{ds}V(H_{cr})\right)\\
&=H_{cr}e_1\left(\frac{d}{ds}\big(e_1(H)\big)\right)+\frac{3}{2}e_1V\left(\frac{d}{ds} H_{cr}\right)\\
&=H_{cr}e_1\Bigg(e_1e_1e_1(h)+2(W-\mbox{Im}A_{11})e_1(h)-e_1\Big(g\big(h^{-1}e_1(h)\mbox{Im}A_{11}-\mbox{Im}A_{11}\big)\Big)\Bigg)\\
&\hspace{4mm}+\frac{3}{2}e_1Ve_1V(h).
\end{split}
\end{equation}
Substituting   (\ref{8.14})-(\ref{8.16}) into (\ref{8.13})
yields 
\begin{equation}\label{8.17}
\begin{split}
&\left.\frac{d^2}{ds^2}E_1(\Sigma_s)\right|_{s=0}\\ &=
|H_{cr}|^{-\frac{1}{2}}\int_{\Sigma_{[\frac{\sqrt{2}}{2}]}}
\Bigg[\frac{1}{2}H_{cr}\Big(e_1e_1e_1e_1(h)+2(W-\mbox{Im}A_{11})e_1e_1(h)\Big)
+\frac{3}{2}e_1V e_1V(h)\\
&\hspace{8mm}+\frac{9}{2}H_{cr}VV(h)
+3H_{cr}^2\Big(e_1e_1(h)+2h(W-\mbox{Im}A_{11})\Big)
\Bigg]h\theta\wedge e^1\\
&\hspace{4mm}-|H_{cr}|^{-\frac{1}{2}}\int_{\Sigma_{[\frac{\sqrt{2}}{2}]}}
\Bigg[\frac{1}{2}H_{cr}e_1e_1\Big(g\big(h^{-1}e_1(h)\mbox{Im}A_{11}-\mbox{Im}A_{11}\big)\Big)\\
&\hspace{8mm}
+3H_{cr}^2g\big(h^{-1}e_1(h)\mbox{Im}A_{11}-\mbox{Im}A_{11}\big)\Bigg]h\theta\wedge e^1.
\end{split}
\end{equation}
Since $\alpha=0$ on the Clifford torus $\Sigma_{[\frac{\sqrt{2}}{2}]}$
by (\ref{8.9}), it follows from Lemma \ref{lem7.4} that 
\begin{equation}\label{8.19}
\int_{\Sigma_{[\frac{\sqrt{2}}{2}]}} f_1 e_1(f_2)\theta\wedge e^1=-\int_{\Sigma_{[\frac{\sqrt{2}}{2}]}} e_1(f_1) f_2\theta\wedge e^1,
\end{equation}
and 
\begin{equation}\label{8.20}
\int_{\Sigma_{[\frac{\sqrt{2}}{2}]}} f_1 V(f_2)\theta\wedge e^1=-\int_{\Sigma_{[\frac{\sqrt{2}}{2}]}}V(f_1)f_2\theta\wedge e^1,
\end{equation}
if either $f_1$ and $f_2$ has compact support in the nonsingular domain of $\Sigma_{[\frac{\sqrt{2}}{2}]}$. 
Applying (\ref{8.19}) and (\ref{8.20}) to (\ref{8.17}) 
and using  (\ref{8.10}) and (\ref{8.11}), we 
obtain (\ref{8.18}). 
This proves the assertion.   
\end{proof}

Taking
$$g=0~~\mbox{ and }~~f=v_k(z)=\cos k(\varphi_1+\varphi_2),~~\mbox{ for }k=1,2,\cdots,$$
we find 
\begin{equation*}
e_1(h)=e_1(f)=0,~~TT(h)=TT(f)=-4k^2f
\end{equation*}
by using (\ref{10.3}) and (\ref{8.9}). 
Substituting these into (\ref{8.18}) yields 
\begin{equation}\label{8.21}
\begin{split}
&\sqrt{\left|\frac{1+8t+t^2}{2(1-t^2)}\right|}\left.\frac{d^2}{ds^2}E_1(\Sigma_s)\right|_{s=0}\\ &=
\int_{\Sigma_{[\frac{\sqrt{2}}{2}]}}
\Bigg[\frac{9(1+8t+t^2)}{4(1-t^2)}fTT(f)
+\frac{3(1+8t+t^2)^2}{4(1-t^2)^2}\left(\frac{4(1-t)}{1+t}f^2\right)
\Bigg]\theta\wedge e^1\\
&=\frac{3(1+8t+t^2)}{1-t^2}
\int_{\Sigma_{[\frac{\sqrt{2}}{2}]}}
\left(-3k^2
+\frac{1+8t+t^2}{(1+t)^2} 
\right)f^2\theta\wedge e^1.
\end{split}
\end{equation}
Recall that we always have $|t|<1$
for the Rossi sphere $S^3_t$. 
Moreover, if $t<-4+\sqrt{15}$, then we have 
$1+8t+t^2>0$. Therefore, if $t>-4+\sqrt{15}$, 
it follows from (\ref{8.21}) that 
$$\left.\frac{d^2}{ds^2}E_1(\Sigma_s)\right|_{s=0}<0$$
if $k$ is sufficiently large. From this, we have the following: 

\begin{cor}\label{cor8.2}
If $t>-4+\sqrt{15}$, then the Clifford torus $\Sigma_{[\frac{\sqrt{2}}{2}]}$
is not a minimizer for $E_1$ among all surfaces of torus type in the 
Rossi sphere $S^3_t$. 
\end{cor}

We remark that Corollary \ref{cor8.2} extends 
\cite[Corollary 1.4]{CCYZ}. Indeed, 
\cite[Corollary 1.4]{CCYZ} proved the special case of Corollary \ref{cor8.2}
when $t=0$.

\section{Three-dimensional tori}\label{section9}

Now we consider the
three-dimensional tori. 
Note that this has been studied by Dall'Ara and Son in \cite{DS}, in which 
they studied the Kohn Laplacian of the three-dimensional tori.
We first recall its construction. 

Let $\gamma:[0,L]\to\mathbb{R}^2$, 
given by 
$\gamma(s)=(\xi(s),\eta(s))$, 
be a smooth closed plane curve parametrized 
by arc-length $s$ such that 
its curvature $\kappa(s)$ is positive. 
We consider the $3$-dimensional torus $M$
such that $\gamma$ is the generating curve of the torus: 
$M$ is parametrized by 
$$(s,x,y)\mapsto(e^{\xi(s)+ix},e^{\eta(s)+iy})\in \mathbb{C}^2$$
where $s\in [0,L]$ and $(x,y)\in\mathbb{T}^2$.  
Then we can define 
a contact form $\theta$ on 
the $3$-dimensional torus
$M:=(\mathbb{R}/L\mathbb{Z})\times\mathbb{T}^2$
by 
\begin{equation}\label{6.1}
\theta=\eta' dx+\xi'dy.
\end{equation}
Hereafter, $'$ denotes the derivative with respect to $s$.
Note that 
$$d\theta=\kappa(\xi'ds\wedge dx-\rho' ds\wedge dy),~~
\theta\wedge d\theta=\kappa ds\wedge dx\wedge dy>0.$$
The Reeb vector field $T$ is given by 
\begin{equation}\label{6.2}
T=\eta'\frac{\partial}{\partial x}+\xi'\frac{\partial}{\partial y}.
\end{equation}
The CR structure on $M$ is defined by 
\begin{equation}\label{6.3}
Z_1=\frac{1}{\sqrt{2\kappa}}
\left(\frac{\partial}{\partial s}-i\xi'\frac{\partial}{\partial x}+i\eta'\frac{\partial}{\partial y}\right).
\end{equation}
The almost complex structure $J$ maps 
\begin{equation}\label{6.4}
J(Z_1)=iZ_1~~\mbox{ and }~~J(Z_{\overline{1}})=-iZ_{\overline{1}}, 
\end{equation}
where $Z_{\overline{1}}=\overline{Z_1}$.
If we define 
\begin{equation}\label{6.5}
\theta^1=\sqrt{\frac{\kappa}{2}}
\big(ds+i\xi'dx-i\eta'dy\big),
\end{equation}
then one can check, by using 
(\ref{6.1})-(\ref{6.3}) and (\ref{6.5}), 
that 
 $(\theta,\theta^1,\theta^{\overline{1}}=\overline{\theta^1})$
is the admissible coframe.
From (\ref{6.1}) and (\ref{6.5}), we compute
\begin{equation}\label{6.11}
d\theta=i\theta^1\wedge \theta^{\overline{1}}
\end{equation}
and 
\begin{equation*}
d\theta^1
=\theta^1\wedge\left(\frac{\kappa'}{\sqrt{8\kappa^3}}
(\theta^1-\theta^{\overline{1}})-\frac{i}{2}\kappa\theta\right)
+\theta\wedge\left(\frac{i}{2}\kappa\theta^{\overline{1}}\right).
\end{equation*}
This implies that 
\begin{equation}\label{6.6}
\omega_1^1=\frac{\kappa'}{\sqrt{8\kappa^3}}
(\theta^1-\theta^{\overline{1}})-\frac{i}{2}\kappa\theta
~~\mbox{ and }~~
A_{\overline{1}\,\overline{1}}=\frac{i}{2}\kappa 
~~\mbox{ and }~~
A_{11}=\overline{A_{\overline{1}\,\overline{1}}}
=-\frac{i}{2}\kappa.
\end{equation}
From (\ref{6.6}), we find 
\begin{equation*}
d\omega_1^1=
\left(\frac{\kappa}{2}-\frac{(\log\kappa)''}{2\kappa}\right)\theta^1\wedge\theta^{\overline{1}}
-\frac{i\kappa'}{\sqrt{2\kappa}}(\theta^1+\theta^{\overline{1}})\wedge\theta.
\end{equation*}
This implies that the Webster curvature is given by 
\begin{equation}\label{6.7}
W=\frac{\kappa}{2}-\frac{(\log\kappa)''}{2\kappa}
=\frac{\kappa^4-\kappa\kappa''+(\kappa')^2}{2\kappa^3}.
\end{equation}

It may be of interest to remark that, by a result of Sunada \cite{Sunada}, two
such $3$-dimensional tori  are biholomorphically equivalent if and only if
the generating curves are rigid equivalent.
Moreover, we would like to emphasize that 
 it follows from (\ref{6.6}) that the torsion never vanishes, 
 since $\kappa>0$ by assumption. 
In particular, the Euler-Lagrangian equations 
(\ref{EE_for_E1})-(\ref{EE_for_E2}) are not applicable. 
This shows the necessity of proving 
Theorem \ref{thm5.1} in Section \ref{section5} and Theorem \ref{thm7.1}
in Section \ref{section6} in order to study the critical points of $E_1$ and $E_2$ 
on the $3$-dimensional tori.

\subsection{Surfaces given by $s=c$.}
We consider the surface $\Sigma_c$ given by $s=c$ for some constant $c\in [0,L]$. 
Note that $\Sigma_c$ is nonsingular. To see this, note that 
$T\Sigma_c$ is spanned by $\displaystyle\frac{\partial}{\partial x}$ 
and $\displaystyle\frac{\partial}{\partial y}$. 
If $p$ is a singular point in $\Sigma_c$, then $\displaystyle\frac{\partial}{\partial x}\in\ker\theta$
and  $\displaystyle\frac{\partial}{\partial y}\in\ker\theta$
at $p$. 
Note that $\displaystyle\frac{\partial}{\partial x}\in\ker\theta$
(and  $\displaystyle\frac{\partial}{\partial y}\in\ker\theta$ respectively)
if and only if 
$\eta'=0$
 (and $\xi'=0$ respectively) by (\ref{6.1}).
Hence, if $p$ is a singular point, we would have 
$\eta'=0$ and $\xi'=0$ at $p$, which contradicts the fact that $\gamma$ is regular. 

By (\ref{6.2}), 
$T=\displaystyle\eta'\frac{\partial}{\partial x}+\xi'\frac{\partial}{\partial y}$, 
and hence $T\in T\Sigma_c$. 
Thus we have 
\begin{equation}\label{6.8}
\alpha\equiv 0
\end{equation}
on $\Sigma_c$. 
By inspection, we see that 
$\displaystyle\xi'\frac{\partial}{\partial x}-\eta'\frac{\partial}{\partial y}\in \ker\theta\cap T\Sigma$. 
Therefore, we have 
\begin{equation}\label{6.9}
e_1=\beta\left(\xi'\frac{\partial}{\partial x}-\eta'\frac{\partial}{\partial y}\right).
\end{equation}
for some constant $\beta$. 
To find $\beta$, we write 
$\displaystyle
\xi'\frac{\partial}{\partial x}-\eta'\frac{\partial}{\partial y}=aZ_1+bZ_{\overline{1}}. 
$
From (\ref{6.3}), we find
$a=\displaystyle i\sqrt{\frac{\kappa}{2}}$ and $b=\displaystyle-i\sqrt{\frac{\kappa}{2}}$. That is to say, 
\begin{equation}\label{6.10}
\xi'\frac{\partial}{\partial x}-\eta'\frac{\partial}{\partial y}=i\sqrt{\frac{\kappa}{2}}Z_1-i\sqrt{\frac{\kappa}{2}}Z_{\overline{1}}. 
\end{equation}
Since $e_1$ is a unit vector 
with respect to  $\frac{1}{2}d\theta(\cdot,J\cdot)$, 
we have 
\begin{equation*}
\begin{split}
1&=\frac{1}{2}d\theta(e_1,Je_1)=\frac{\beta^2}{2}d\theta\left(\xi'\frac{\partial}{\partial x}-\eta'\frac{\partial}{\partial y}, J\Big(\xi'\frac{\partial}{\partial x}-\eta'\frac{\partial}{\partial y}\Big)\right)\\
&=\frac{\beta^2}{2}d\theta\left(i\sqrt{\frac{\kappa}{2}}Z_1-i\sqrt{\frac{\kappa}{2}}Z_{\overline{1}}, J\Big(i\sqrt{\frac{\kappa}{2}}Z_1-i\sqrt{\frac{\kappa}{2}}Z_{\overline{1}}\Big)\right)\\
&=-\frac{\beta^2\kappa}{4}d\theta\big(Z_1-Z_{\overline{1}},J(Z_1-Z_{\overline{1}})\big)\\
&=-\frac{\beta^2\kappa}{4}id\theta\big(Z_1-Z_{\overline{1}},Z_1+Z_{\overline{1}})\big)
=-\frac{\beta^2\kappa}{2}id\theta\big(Z_1,Z_{\overline{1}})\big)\\
&=\frac{\beta^2\kappa}{2}(\theta^1\wedge \theta^{\overline{1}})\big(Z_1-Z_{\overline{1}},Z_1+Z_{\overline{1}})\big)=
\frac{\beta^2\kappa}{2},
\end{split}
\end{equation*}
where the second equality follows from (\ref{6.9}), 
the third equality follows from (\ref{6.10}), 
the fifth equality follows from (\ref{6.4}), 
and the second last equality follows from (\ref{6.11}). 
This gives $\beta=\displaystyle\sqrt{\frac{2}{\kappa}}$. 
Hence, it follows from (\ref{6.9}) and (\ref{6.10}) that 
\begin{equation}\label{6.12}
e_1=\sqrt{\frac{2}{\kappa}}\left(\xi'\frac{\partial}{\partial x}-\eta'\frac{\partial}{\partial y}\right)
=i(Z_1-Z_{\overline{1}}).
\end{equation}
It follows from (\ref{6.4}) and (\ref{6.12}) that
\begin{equation}\label{6.13}
e_2=J e_1=i(JZ_1-JZ_{\overline{1}})=-(Z_1+Z_{\overline{1}}).
\end{equation}
From (\ref{6.12}) and (\ref{6.13}), we find 
\begin{equation}\label{6.14}
e^1=\frac{i}{2}(\theta^{\overline{1}}-\theta^1)~~\mbox{ and }~~
e^2=-\frac{1}{2}(\theta^{\overline{1}}+\theta^1).
\end{equation}
To check (\ref{6.14}), one can see from (\ref{6.2}), (\ref{6.12})-(\ref{6.14}) that $e^j(e_k)=\delta_{jk}$
and $e^j(T)=0$ for all $j,k=1,2$. 
Substituting
(\ref{6.5})
into (\ref{6.14}) yields  
\begin{equation}\label{6.15}
e^1=i\sqrt{2\kappa}(-\xi'dx+\eta' dy)
~~\mbox{ and }~~e^2=-\sqrt{\frac{\kappa}{2}}ds.
\end{equation}
From (\ref{6.1}) and (\ref{6.15}), we compute 
\begin{equation}\label{6.16}
\theta\wedge e^1=-\sqrt{\frac{\kappa}{2}} dx\wedge dy
~~\mbox{ and }~~\theta\wedge e^1\wedge e^2=\frac{\kappa}{2}ds\wedge dx\wedge dy.
\end{equation}
Taking the exterior derivative of the first equation in (\ref{6.16}) gives 
\begin{equation}\label{6.17}
d(\theta\wedge e^1)=-\frac{\kappa'}{2\sqrt{2\kappa}}ds\wedge dx\wedge dy.
\end{equation}
Since $d(\theta\wedge e^1)=-H\theta\wedge e^1\wedge e^2$, 
we can deduce from (\ref{6.16}) and (\ref{6.17}) that 
\begin{equation}\label{6.18}
H=\frac{\kappa'}{\sqrt{2}\kappa^{\frac{3}{2}}}. 
\end{equation}

Substituting (\ref{6.6}), (\ref{6.7}), (\ref{6.8}) and (\ref{6.18}) into (\ref{5.2}), we find 
\begin{equation}\label{6.19}
H_{cr}=-\mbox{Im} A_{11}+\frac{1}{4}W+\frac{1}{6}H^2
=\frac{1}{2}\kappa+\frac{\kappa^4-\kappa\kappa''+(\kappa')^2}{8\kappa^3}
+\frac{(\kappa')^2}{12\kappa^3}.
\end{equation}

\subsubsection{Example 1.} 
Consider the $3$-dimensional torus $M$ such that 
the generating curve $\gamma: [0,2\pi r]\to\mathbb{R}^2$ is the circle of radius $r>0$, i.e. 
$\gamma(s)=(r\cos\frac{s}{r},r\sin \frac{s}{r})$. 
Then its curvature equals
\begin{equation}\label{6.20}
\kappa\equiv \frac{1}{r}.
\end{equation}
In particular, it satisfies the assumption that the curvature $\kappa>0$. 
Therefore, we can consider the $3$-dimensional torus $M$ with $\gamma$ as generating curve. 

Now, for the surfaces $\Sigma$ given by $s=c$, 
we substitute (\ref{6.20}) into (\ref{6.18}) and (\ref{6.19}) to get
\begin{equation}\label{6.21}
H=0
\end{equation}
and 
\begin{equation}\label{6.22}
H_{cr}=\frac{5}{8r}\neq 0. 
\end{equation}
Substituting (\ref{6.6}), (\ref{6.7}), (\ref{6.8}), (\ref{6.21}) and (\ref{6.22}) into
(\ref{5.9}) and (\ref{5.10}), we can conclude from (\ref{5.1})
that 
$$\mathcal{E}_1=0.$$
To summarize, we have proved the following: 

\begin{lem}\label{lem6.1}
Consider the $3$-dimensional torus $M$ such that 
the generating curve  $\gamma: [0,2\pi r]\to\mathbb{R}^2$
is the circle of radius $r$ given by
$\gamma(s)=(r\cos\frac{s}{r},r\sin \frac{s}{r}).$
Then for any $c\in [0,2\pi r]$, the surface $\Sigma_c$ defined by $s=c$ 
is a critical point for the functional $E_1$ with nonzero energy. 
\end{lem}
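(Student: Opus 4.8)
The plan is to apply Theorem~\ref{thm5.1}, whose hypotheses are met here: by the discussion preceding the lemma the surface $\Sigma_c=\{s=c\}$ is nonsingular, and specializing the computations of this section to the circle $\gamma(s)=(r\cos\frac sr,r\sin\frac sr)$, for which $\kappa\equiv\frac1r$ is constant, yields $\alpha\equiv0$ by (\ref{6.8}), $H\equiv0$ by (\ref{6.21}), and $H_{cr}=\frac{5}{8r}\neq0$ by (\ref{6.22}). It then remains to verify $\mathcal E_1=0$, where $\mathcal E_1$ is given by (\ref{5.1}) in terms of the two building blocks $|H_{cr}|\mathfrak f$ from (\ref{5.9}) and $9h_{00}+6h_{11}h_{10}+\frac23h_{11}^3$ from (\ref{5.10}). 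The whole argument rests on the fact that a circular generating curve makes the ambient invariants constant: by (\ref{6.6}) and (\ref{6.7}) we have $A_{11}=-\frac i2\kappa$ and $W=\frac1{2r}$ constant, so the torsion is purely imaginary and in particular $\mathrm{Re}\,A^1_{\overline1}=0$. Since the torsion does not vanish, the Cheng--Yang--Zhang equation (\ref{EE_for_E1}) is inapplicable and Theorem~\ref{thm5.1} is exactly what is needed.

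First I would show $\mathfrak f=0$. Imposing $\alpha\equiv0$ and $H\equiv0$ in (\ref{5.9}) annihilates every summand carrying a factor of $\alpha$ or $H$; moreover $e_1(\alpha)=T(\alpha)=0$ and $e_1(H)=0$ because $\alpha$ and $H$ are constant on $\Sigma_c$ while $e_1,T\in T\Sigma_c$. Only two terms survive, namely $3\,\mathrm{Re}\,A^1_{\overline1}\bigl(-\mathrm{Im}\,A_{11}+\tfrac14W\bigr)$, which vanishes because $\mathrm{Re}\,A^1_{\overline1}=0$, and $\tfrac32\,T\bigl(-\mathrm{Im}\,A_{11}+\tfrac14W\bigr)$, which vanishes because $\mathrm{Im}\,A_{11}$ and $W$ are constant. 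Hence $|H_{cr}|\mathfrak f=0$, and since $H_{cr}\neq0$ this forces $\mathfrak f\equiv0$ on $\Sigma_c$. Next, in (\ref{5.10}) the same substitution leaves only $9\,\mathrm{Im}\bigl(\tfrac16W^{,1}+\tfrac{2i}{3}(A^{11})_{,1}\bigr)$; here $W^{,1}=0$ since $W$ is constant, while the covariant derivative $(A^{11})_{,1}=Z_1(A^{11})+2\omega_1^1(Z_1)A^{11}$ vanishes because $A^{11}$ is constant and, by (\ref{6.6}) with $\kappa'=0$, $\omega_1^1=-\frac i2\kappa\theta$ so that $\omega_1^1(Z_1)=0$. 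Thus $9h_{00}+6h_{11}h_{10}+\frac23h_{11}^3=0$ as well.

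Finally, feeding $\mathfrak f\equiv0$, $9h_{00}+6h_{11}h_{10}+\frac23h_{11}^3=0$ and $\alpha\equiv0$ into (\ref{5.1})---and using that $|H_{cr}|^{1/2}\mathfrak f\equiv0$ on $\Sigma_c$ together with $e_1\in T\Sigma_c$ to kill the leading term $e_1(|H_{cr}|^{1/2}\mathfrak f)$---gives $\mathcal E_1=0$, so Theorem~\ref{thm5.1} identifies $\Sigma_c$ as a critical point of $E_1$. For the energy, (\ref{0.4}) and (\ref{6.16}) give $E_1(\Sigma_c)=\int_{\Sigma_c}|H_{cr}|^{3/2}\theta\wedge e^1=\bigl(\tfrac{5}{8r}\bigr)^{3/2}\int_{\mathbb T^2}\sqrt{\kappa/2}\,dx\,dy>0$, which is nonzero. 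I expect the only real work---and the place most prone to error---to be the term-by-term reduction of the long expressions (\ref{5.9}) and (\ref{5.10}) and the verification that the connection contribution to $(A^{11})_{,1}$ genuinely drops out; every remaining step is a direct substitution of the constant values of $\alpha$, $H$, $W$, and $A_{11}$.
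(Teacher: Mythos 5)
Your proposal is correct and follows essentially the same route as the paper: the paper also establishes $\alpha\equiv 0$, $H\equiv 0$, $H_{cr}=\frac{5}{8r}\neq 0$ and then substitutes (\ref{6.6})--(\ref{6.8}) into (\ref{5.9}) and (\ref{5.10}) to conclude $\mathcal{E}_1=0$ via Theorem \ref{thm5.1}. You simply carry out explicitly the term-by-term cancellation (including the observation that $\mathrm{Re}\,A^1_{\overline{1}}=0$ and $(A^{11})_{,1}=0$ for constant $\kappa$) that the paper leaves to the reader.
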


Similar to Lemma \ref{lem5.3}, 
we have the following: 

\begin{lem} 
Consider the $3$-dimensional torus $M$ such that 
the generating curve  $\gamma: [0,2\pi r]\to\mathbb{R}^2$
is the circle of radius $r$ given by
$\gamma(s)=(r\cos\frac{s}{r},r\sin \frac{s}{r}).$
If $\Sigma$ is a surface in $M$ without singular point, 
then it cannot have $E_1=0.$
\end{lem}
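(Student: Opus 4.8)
The plan is to follow the proof of Lemma \ref{lem5.3} verbatim in structure, feeding in the torus data of \eqref{6.6}--\eqref{6.7} in place of the Rossi data. First I would note that $dA_1=|H_{cr}|^{3/2}\,\theta\wedge e^1$ has a nonnegative integrand, so $E_1=0$ forces $H_{cr}\equiv 0$ on the nonsingular surface $\Sigma$. For the circle generating curve the curvature is the constant $\kappa\equiv 1/r$ by \eqref{6.20}, and hence \eqref{6.6} and \eqref{6.7} give the constants
$$\mbox{Im}\,A_{11}=-\frac{1}{2}\kappa=-\frac{1}{2r},\qquad W=\frac{1}{2}\kappa=\frac{1}{2r}$$
on all of $M$.

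Next I would solve $H_{cr}=0$ for $e_1(\alpha)$ using \eqref{5.2}:
$$e_1(\alpha)=-\frac{1}{2}\alpha^2+\mbox{Im}\,A_{11}-\frac{1}{4}W-\frac{1}{6}H^2=-\frac{1}{2}\alpha^2-\frac{1}{6}H^2-\frac{5}{8r}.$$
Since the first two terms on the right are nonpositive, this yields the pointwise bound $e_1(\alpha)\le -\frac{5}{8r}<0$ on $\Sigma$. The decisive feature is that the curvature--torsion combination $\mbox{Im}\,A_{11}-\frac{1}{4}W=-\frac{5}{8r}$ is strictly negative, exactly as the quantity $-\frac{1-8t+t^2}{2(1-t^2)}$ is strictly negative for the Rossi sphere in the admissible range of $t$.

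Finally I would argue as in Lemma \ref{lem5.3}. On the closed nonsingular surface $\Sigma$ the characteristic field $e_1$ is a nowhere-vanishing unit vector field, so its integral (characteristic) curves exist for all parameter values. Along any such curve $\gamma(\sigma)$ one has $\frac{d}{d\sigma}\alpha(\gamma(\sigma))=e_1(\alpha)\le -\frac{5}{8r}$, so $\alpha$ decreases with derivative bounded away from $0$; the curve therefore cannot close up and $\alpha(\gamma(\sigma))\to-\infty$. This contradicts the boundedness of the continuous function $\alpha$ on the compact surface $\Sigma$, so $E_1=0$ is impossible.

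The step I expect to be the main obstacle is being careful about the frame in which $\mbox{Im}\,A_{11}$ is read off: in \eqref{5.2} it denotes the torsion component in the surface-adapted frame $e_1,e_2$, whereas \eqref{6.6} gives the value in the ambient frame of \eqref{6.3}. One must justify---as was implicitly done in \eqref{6.19} and in Lemma \ref{lem5.3}---that the constant ambient value may be substituted, since the entire argument hinges on the strict sign $\mbox{Im}\,A_{11}-\frac{1}{4}W<0$. The secondary point requiring care is the completeness of the characteristic flow and the resulting unboundedness of $\alpha$, which is where compactness (closedness) of $\Sigma$ is used.
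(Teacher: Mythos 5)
Your proposal is correct and follows essentially the same route as the paper: the paper likewise substitutes $\mbox{Im}A_{11}=-\tfrac{1}{2r}$ and $W=\tfrac{1}{2r}$ into $H_{cr}=0$ to get $e_1(\alpha)=-\tfrac{1}{2}\alpha^2-\tfrac{1}{2r}-\tfrac{1}{8r}-\tfrac{1}{6}H^2<0$ and then concludes that the characteristic curve is open with $\alpha$ unbounded along it, contradicting compactness. Your write-up merely makes explicit two steps the paper leaves implicit (that $E_1=0$ forces $H_{cr}\equiv 0$ because the integrand of $dA_1$ is nonnegative, and that the derivative bound $e_1(\alpha)\le-\tfrac{5}{8r}$ together with completeness of the flow on a closed surface drives $\alpha\to-\infty$), so no further comment is needed.
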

\begin{proof}
If $E_1=0$, then it follows from (\ref{5.2}), (\ref{6.6}), (\ref{6.7}) and  (\ref{6.20}) that 
\begin{equation*}
\begin{split}
e_1(\alpha)&=-\frac{1}{2}\alpha^2+\mbox{Im} A_{11}-\frac{1}{4}W-\frac{1}{6}H^2\\
&=-\frac{1}{2}\alpha^2-\frac{1}{2r}-\frac{1}{8r}-\frac{1}{6}H^2<0.
\end{split}
\end{equation*}
Therefore, the characteristic curve of $e_1$ is open 
and along the curve $\alpha$ goes to infinity, which contradicts to $\alpha$ being 
bounded on a nonsingular compact surface.   
\end{proof}

For the circle of radius $r$, i.e. $\gamma: [0,2\pi r]\to\mathbb{R}^2$ given by 
$\gamma(s)=(r\cos\frac{s}{r},r\sin \frac{s}{r})$, 
its curvature is given as in (\ref{6.20}). 
It follows from (\ref{6.6}), (\ref{6.7}) and  (\ref{6.20}) that the $3$-dimensional torus $M$ 
with $\gamma$ as generating curve 
satisfies 
\begin{equation}\label{6.29}
A_{11}=-\frac{i}{2r}~~\mbox{ and }~~W=\frac{1}{2r}.
\end{equation}
That is to say, $M$ has constant Webster scalar curvature and 
its torsion is constant and purely imaginary.
In particular, Theorem \ref{thm7.1} is applicable to $M$. 

Now, for the surface $\Sigma$ defined by $s=c$, 
(\ref{6.8}) and (\ref{6.21}) hold. 
Substituting (\ref{6.8}), (\ref{6.21}) and (\ref{6.29}) into 
 (\ref{7.34}), we find 
\begin{equation*}
\frac{9}{4}\mathcal{E}_2=\frac{3}{4}W^2
+3(\mbox{Im}A_{11})^2-\frac{15}{4}W\mbox{Im}A_{11}=\frac{15}{8r^2}>0. 
\end{equation*}
 In particular, it follows from Theorem \ref{thm7.1} that
  the surface $\Sigma$ defined by $s=c$
 is not a critical point 
for the functional $E_2$ in $M$.

\subsubsection{Example 2.} 
Consider the $3$-dimensional torus $M$ such that 
the generating curve  $\gamma: [0,2\pi]\to\mathbb{R}^2$
is the ellipse given by 
$$\gamma(t)=(a\cos t, b\sin t),$$
where $a$ and $b$ are fixed positive number. 
Note that $t$ may not be the arc-length $s$. Indeed, the arclength $s$ is given by 
\begin{equation}\label{6.23}
s=\int^t_0\sqrt{a^2\sin^2t+b^2\cos^2t}dt.
\end{equation}
The curvature $\kappa$ of $\gamma$ can be computed as
\begin{equation}\label{6.24}
\kappa=\frac{\ddot{(b\sin t)} \dot{(a\cos t)}-\ddot{(a\cos t)}\dot{(b\sin t)}}{(a^2\sin^2t+b^2\cos^2t)^{\frac{3}{2}}}
=\frac{ab}{(a^2\sin^2t+b^2\cos^2t)^{\frac{3}{2}}},
\end{equation}
where $\dot{}$ denotes derivatives with respect to $t$. 
From (\ref{6.24}), we see that $\kappa$ is positive. 
Recall that $'$ denotes derivatives with respect to $s$. 
It follows from (\ref{6.23}) and (\ref{6.24}) that 
\begin{equation}\label{6.25}
\kappa'=\frac{d}{dt}\left(\frac{ab}{(a^2\sin^2t+b^2\cos^2t)^{\frac{3}{2}}}\right)\left(\frac{ds}{dt}\right)^{-1}
=-\frac{3ab(b^2-a^2)\sin(2t)}{2(a^2\sin^2t+b^2\cos^2t)^{3}}.
\end{equation}
Also, it follows from (\ref{6.23}) and (\ref{6.25}) that 
\begin{equation}\label{6.26}
\begin{split}
  \kappa''&=-\frac{d}{dt}\left(\frac{3ab(b^2-a^2)\sin (2t)}{2(a^2\sin^2t+b^2\cos^2t)^{3}}\right)\left(\frac{ds}{dt}\right)^{-1}\\
  &=\frac{3ab(b^2-a^2)\cos (2t)}{(a^2\sin^2t+b^2\cos^2t)^{\frac{7}{2}}}+\sin(2t) g(t)
  \end{split}
\end{equation}
for some smooth function $g(t)$. 
Substituting (\ref{6.24})-(\ref{6.26}) into 
(\ref{6.19})
and evaluating it at $t=0$ and at $t=\frac{\pi}{2}$, we obtain 
\begin{equation}\label{6.27}
H_{cr}|_{t=0}=\frac{8a^2-3b^2}{8ab^2}~~\mbox{ and }~~
H_{cr}|_{t=\frac{\pi}{2}}=\frac{8b^2-3a^2}{8a^2b}.
\end{equation}
In particular, if 
$\frac{b^2}{a^2}<\frac{3}{8}$, then 
it follows from (\ref{6.27}) that 
\begin{equation}\label{6.28}
H_{cr}|_{t=0}>0~~\mbox{ and }~~
H_{cr}|_{t=\frac{\pi}{2}}<0.
\end{equation}
By intermediate value theorem, there exists $t_0$ between $0$ and $\frac{\pi}{2}$
such that 
$H_{cr}|_{t=t_0}=0$. It follows from (\ref{6.23}) that 
$s_0=\displaystyle\int^{t_0}_0\sqrt{a^2\sin^2t+b^2\cos^2t}dt.$
To conclude, we have proved the following: 

\begin{lem}\label{lem6.2}
Consider the $3$-dimensional torus $M$ such that 
the generating curve  $\gamma: [0,2\pi]\to\mathbb{R}^2$
is the ellipse given by 
$\gamma(t)=(a\cos t, b\sin t).$
If $\frac{b^2}{a^2}<\frac{3}{8}$, then 
there exists $t_0\in (0,\pi)$ such that 
the surface $\Sigma_{s_0}$ defined by $s=s_0$, where $s_0=\int^{t_0}_0\sqrt{a^2\sin^2t+b^2\cos^2t}dt$, 
is a minimizer for the functional $E_1$ with zero energy.
\end{lem}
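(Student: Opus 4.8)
The plan is to locate a level $s=s_0$ at which $H_{cr}$ vanishes identically, and then to invoke the nonnegativity of the integrand defining $E_1$ to conclude that this level surface is a global minimizer.

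First I would observe that, for any surface $\Sigma_c$ given by $s=c$, the derivation function vanishes ($\alpha\equiv 0$ by \eqref{6.8}), so by \eqref{6.19} the quantity $H_{cr}$ is expressed entirely in terms of $\kappa$, $\kappa'$ and $\kappa''$. Since $\kappa$ and its $s$-derivatives depend only on the arc-length parameter $s$ (equivalently on $t$ through \eqref{6.23}), the function $H_{cr}$ is constant along each such surface: it takes the single value $H_{cr}|_{t=c}$ over the whole $\mathbb{T}^2$ fibre.

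Next I would carry out the explicit evaluation. Substituting the ellipse curvature \eqref{6.24} together with its derivatives \eqref{6.25}--\eqref{6.26} into \eqref{6.19}, and using that $\sin(2t)$ vanishes at $t=0$ and $t=\frac{\pi}{2}$ (so that $\kappa'$, and hence the $\frac{1}{6}H^2$ contribution, drops out at these two points), produces the boundary values recorded in \eqref{6.27}. Under the hypothesis $\frac{b^2}{a^2}<\frac{3}{8}$ these have opposite signs, as in \eqref{6.28}; since $t\mapsto H_{cr}$ is continuous (because $\kappa>0$ is smooth), the intermediate value theorem yields some $t_0\in(0,\frac{\pi}{2})\subset(0,\pi)$ with $H_{cr}|_{t=t_0}=0$, and the associated arc-length is $s_0=\int_0^{t_0}\sqrt{a^2\sin^2 t+b^2\cos^2 t}\,dt$ by \eqref{6.23}.

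Finally I would conclude exactly as in Lemmas \ref{lem3.1} and \ref{lem5.1}. Because $H_{cr}$ is constant along $\Sigma_{s_0}$ and equals its value $H_{cr}|_{t=t_0}=0$, the density $dA_1=|H_{cr}|^{3/2}\theta\wedge e^1$ from \eqref{0.4} vanishes identically on $\Sigma_{s_0}$, so $E_1(\Sigma_{s_0})=0$. As $E_1$ is the integral of the nonnegative quantity $|H_{cr}|^{3/2}$, the value $0$ is the global minimum; hence $\Sigma_{s_0}$ is a minimizer with zero energy. I expect no serious obstacle here: the only genuine computation is the simplification of \eqref{6.19} at the endpoints $t=0,\frac{\pi}{2}$, which is purely algebraic, and the sign check in \eqref{6.28} is immediate from $a,b>0$ together with $\frac{b^2}{a^2}<\frac{3}{8}$.
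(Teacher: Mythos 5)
Your proposal is correct and follows essentially the same route as the paper: compute $\kappa$, $\kappa'$, $\kappa''$ for the ellipse, substitute into \eqref{6.19} to get the endpoint values \eqref{6.27}, apply the intermediate value theorem under the hypothesis $\frac{b^2}{a^2}<\frac{3}{8}$, and conclude from $H_{cr}=0$ and the nonnegativity of $dA_1=|H_{cr}|^{3/2}\theta\wedge e^1$ that $\Sigma_{s_0}$ is a zero-energy minimizer. The only difference is that you spell out the final minimization step (which the paper leaves implicit, as in Lemmas \ref{lem3.1} and \ref{lem5.1}); this is a harmless and correct addition.
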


\section{Appendix}

In this Appendix, we prove Lemmas \ref{lem7.2}, \ref{lem7.3},
\ref{lem7.5} and Theorem \ref{thm7.1}. 

\begin{proof}[Proof of Lemma \ref{lem7.2}]
Note that (\ref{7.17}) is defined on $\Sigma$. 
To prove it, we extend the frame $e_2, e_1=-Je_2$
at points of $\Sigma$ to a neighborhood $U$ of $\Sigma$
(still denoted by the same notation) such that
$e_2\in\xi$ and 
$$\nabla_{e_2}e_2=0~~\mbox{ and }~~
e_1:=-Je_2~~\mbox{ in }U.$$
Hence, we have 
\begin{equation}\label{7.18}
\omega(e_2)=0.
\end{equation}
Note that 
(\ref{7.18}) does not hold for $e_2$, $e_1$ defined on $F_t(\Sigma)$ canonically. 
Combining (2.16) in \cite{CYZ} with (\ref{7.1}) and (\ref{7.18}), we obtain 
\begin{equation}\label{7.19}
\begin{split}
   \omega(T) & =e_1(\alpha)+2\alpha^2-\mbox{Im}A_{11}\mbox{ and hence}\\
    e_1e_1(\alpha) &=e_1(\omega(T))-4\alpha e_1(\alpha). 
\end{split}
\end{equation}

By the first and third formulas in (\ref{7.9}), we find
\begin{equation*}
\begin{split}
0&=d\omega(e_1,T)=e_1(\omega(T))-T(\omega(e_1))-\omega([e_1,T])\\
&=e_1(\omega(T))-T(\omega(e_1))-\omega\big((\mbox{Im}A_{11}+\omega(T))e_2\big)
=e_1(\omega(T))-T(\omega(e_1)).
\end{split}
\end{equation*}
This together with  
(\ref{7.2}), (\ref{7.18}) and the last equation in (\ref{7.9})
implies that 
\begin{equation}\label{7.20}
\begin{split}
e_1(\omega(T))&=T(\omega(e_1))=(V-\alpha e_2)(\omega(e_1))\\
&=V(H)-\alpha e_2(\omega(e_1))\\
&=V(H)-\alpha \big(e_1(\omega(e_2))+2W+\omega(e_2)^2+\omega(e_1)^2+2\omega(T)\big)\\
&=V(H)-\alpha\big(2W+H^2+2\omega(T)\big)
\end{split}
\end{equation}
in $\Sigma$. 
Substituting (\ref{7.20}) into (\ref{7.19}), 
we have 
\begin{equation*}
\begin{split}
 e_1e_1(\alpha) &=e_1(\omega(T))-4\alpha e_1(\alpha)\\
 &=V(H)-\alpha\big(2W+H^2+2\omega(T)\big)-4\alpha e_1(\alpha)\\
 &=V(H)-2W\alpha-\alpha H^2
 -2\alpha\big(e_1(\alpha)+2\alpha^2-\mbox{Im}A_{11}\big)
 -4\alpha e_1(\alpha)\\
 &=V(H)-6\alpha e_1(\alpha)-\alpha H^2-4\alpha^3-2W\alpha+2\alpha \mbox{Im}A_{11}, 
\end{split}
\end{equation*}
as required. 
\end{proof}

\begin{proof}[Proof of Lemma \ref{lem7.3}]
It follows from (7.5) in \cite{CYZ} that
\begin{equation*}
\begin{split}
de^1&=-e^2\wedge\omega+\theta\wedge(a_1e^1-a_2 e^2),\\
de^2&=e^1\wedge\omega-\theta\wedge(a_2e^1+a_1 e^2), 
\end{split}
\end{equation*}  
where $a_1$ and $a_2$ are defined as in (\ref{7.4}). 
Combining this with (\ref{7.1}), we have 
\begin{equation}\label{7.25}
\begin{split}
de^1&=-e^2\wedge\omega-c_2\theta\wedge e^2,\\
de^2&=e^1\wedge\omega-c_2\theta\wedge e^1. 
\end{split}
\end{equation}  
We compute 
\begin{equation}\label{7.26}
\begin{split}
d(\theta\wedge e^1) &
=d\theta\wedge e^1+\theta\wedge de^1+\theta\wedge de^1\\
&=(2e^1\wedge e^2)\wedge e^1+\theta\wedge (-e^2\wedge\omega-c_2\theta\wedge e^2)\\
&=\theta\wedge e^2\wedge \omega=-H\theta\wedge e^1\wedge e^2
\end{split}
\end{equation}
where we have used (\ref{7.2}), (\ref{7.25}) and the fact that $d\theta=2e^1\wedge e^2$. 
Hence, we find  
\begin{equation*}
\begin{split}
\frac{d}{dt}[F_t^*(\theta\wedge e^1)]
&=F_t^*L_{fe_2+gT}(\theta\wedge e^1)~~\mbox{(pullback omitted)}\\
&=i_{fe_2+gT}(-H\theta\wedge e^1\wedge e^2)
+di_{fe_2+gT}(\theta\wedge e^1)\\
&=-H(f\theta\wedge e^1+ge^1\wedge e^2)+d(ge^1)\\
&=-H(f\theta\wedge e^1+ge^1\wedge e^2)+dg\wedge e^1+gde^1\\
&=-H(f\theta\wedge e^1+ge^1\wedge e^2)
+\big(e_1(g) e^1+e_2(g)e^2+T(g)\theta\big)\wedge e^1\\
&\hspace{4mm}+g(-e^2\wedge\omega-c_2\theta\wedge e^2)\\
&=-(f-\alpha g)H\theta\wedge e^1+\big(\alpha e_2(g)+T(g)\big)\theta\wedge e^1
-\alpha g H\theta\wedge e^1\\
&=\big(-fH+V(g)\big)\theta\wedge e^1,
\end{split}
\end{equation*}
where we have used (\ref{7.2}), (\ref{7.3}), (\ref{7.25}) and (\ref{7.26}). 
This proves (\ref{7.21}).

From the third equation of (\ref{7.9}),
we observe that 
\begin{equation}\label{7.27}
\begin{split}
\frac{d}{dt}
(F_t^*\omega)
&=L_{fe^2+gT}\omega\\
&=d\big(f\omega(e_2)+g\omega(T)\big)+i_{fe_2+gT}(d\omega)\\
&=d\big(f\omega(e_2)+g\omega(T)\big)+i_{fe_2+gT}(-2We^1\wedge e^2)\\
&=d\big(f\omega(e_2)+g\omega(T)\big)+2Wfe^1.
\end{split}
\end{equation}
We compute 
\begin{equation}\label{7.28}
\begin{split}
\frac{dH}{dt}&=\frac{d}{dt}\big(\omega(e_1)\big)\\
&=(L_{fe^2+gT}\omega)(e_1)+\omega([fe^2+gT,e_1])\\
&=d\big(f\omega(e_2)+g\omega(T)\big)(e_1)+2Wf\\
&\hspace{4mm}
+\omega\big(-e_1(f)e_2+f[e_2,e_1]-e_1(g)T+g[T,e_1]\big)\\
&=e_1\big(f\omega(e_2)+g\omega(T)\big) +2Wf\\
&\hspace{4mm}
+\omega\big(-e_1(f)e_2+f(2T+\omega(e_1)e_1+\omega(e_2)e_2)-e_1(g)T+g(\mbox{Im}A_{11}+\omega(T))e_2\big)\\
&=e_1(f)\omega(e_2)+fe_1(\omega(e_2))+e_1(g)\omega(T)+ge_1(\omega(T))+2Wf-e_1(f)\omega(e_2)\\
&\hspace{4mm}+2f\omega(T)+f\omega(e_2)^2+f\omega(e_1)^2
-e_1(g)\omega(T)+g(\mbox{Im}A_{11}+\omega(T))\omega(e_2)\\
&=f\big[2W+e_1(\omega(e_2))+\omega(e_1)^2+\omega(e_2)^2+2\omega(T)\big]\\
&\hspace{4mm}+g\big[e_1(\omega(T))+\omega(e_2)\omega(T)+\mbox{Im}A_{11}\big],
\end{split}
\end{equation}
where the third equality follows from  (\ref{7.27})
and
the fourth equality follows from (\ref{7.9}).
Substituting (\ref{7.2}), (\ref{7.10})
and (\ref{7.11}) into (\ref{7.28})
and using the assumption (\ref{7.1}), 
we find 
\begin{equation}\label{7.29}
\begin{split}
\frac{dH}{dt}&=
e_1e_1(h)+2\alpha e_1(h)
+f\big[4e_1(\alpha)+4\alpha^2+H^2+2W-2\mbox{Im}A_{11}\big]\\
&\hspace{4mm}
+g\big[e_1e_1(\alpha)+2\alpha e_1(\alpha)
-h^{-1}e_1(h)\mbox{Im}A_{11}-2\alpha \mbox{Im}A_{11}+\mbox{Im}A_{11}\big].
\end{split}
\end{equation}
Substituting (\ref{7.17}) into (\ref{7.29}), 
we obtain (\ref{7.22}).

From (\ref{7.25}) and $e^2\wedge e^1=\alpha \theta\wedge e^1$ on $F_t(\Sigma)$ by 
(\ref{7.3}), we compute 
\begin{equation*}
\begin{split}
\frac{d}{dt}(F_t^* e^1)
&=L_{fe_2+gT}e^1=i_{fe_2+gT}(de^1)\\
&=i_{fe_2+gT}(-e^2\wedge\omega-c_2\theta\wedge e^2)\\
&=-f\omega(e_1)e^1-f\omega(T)\theta+g\omega(T)e^2-c_2g e^2
+c_2f\theta
\end{split}
\end{equation*}
and 
\begin{equation*}
\begin{split}
\frac{d}{dt}(F_t^* e^2)
&=L_{fe_2+gT}e^2=i_{fe_2+gT}(de^2)+df\\
&=i_{fe_2+gT}(e^1\wedge\omega-c_2\theta\wedge e^1)+df\\
&=-\big(f\omega(e_2)+g\omega(T)\big)e^1-g c_2e^1+df, 
\end{split}
\end{equation*}
and hence
\begin{equation}\label{7.30}
\begin{split}
\frac{d}{dt}(F_t^*(e^2\wedge e^1))
&=(L_{fe_2+gT}e^2)\wedge e^1+e^2\wedge( L_{fe_2+gT}e^1)\\
&=df\wedge e^1+e^2\wedge (-f\omega(e_1)e^1)\\
&=\big[\alpha e_2(f)+T(f)-\alpha H f\big]\theta\wedge e^1.
\end{split}
\end{equation}
Here, we have used $e^2\wedge e^1=\alpha \theta\wedge e^1$, $e^2\wedge \theta=0$
and $\omega(e_1)=H$ on $\Sigma_t(\Sigma)$
by (\ref{7.2}) and (\ref{7.3}). 
On the other hand, we have 
\begin{equation}\label{7.31}
\begin{split}
\frac{d}{dt}(F_t^*(e^2\wedge e^1))
&=\frac{d}{dt}(F_t^*(\alpha\theta\wedge e^1))\\
&\frac{d\alpha}{dt}\theta\wedge e^1+\alpha \frac{d}{dt}(F_t^*(\theta\wedge e^1))\\
&\frac{d\alpha}{dt}\theta\wedge e^1+\alpha \big(-fH+V(g)\big)\theta\wedge e^1
\end{split}
\end{equation}
where we have used (\ref{7.21}).
Combining (\ref{7.30}) and (\ref{7.31}), we obtain 
\begin{equation*}
\frac{d\alpha}{dt}+\alpha \big(-fH+V(g)\big)
=\alpha e_2(f)+T(f)-\alpha H f,
\end{equation*}
which gives (\ref{7.23}), since $V=T+\alpha e_2$ and $h=f-\alpha g$. 

We now compute 
\begin{equation}\label{7.32}
\begin{split}
&\frac{d}{dt}e_1(\alpha)=\frac{d}{dt}[(d\alpha)(e_1)]\\
&=(L_{fe_2+gT}d\alpha)(e_1)+d\alpha([fe_2+gT,e_1])\\
&=e_1(fe_2+gT)(\alpha)+[fe_2+gT,e_1](\alpha)\\
&=fe_1e_2(\alpha)+ge_1T(\alpha)+f[e_2,e_2](\alpha)+g[T,e_1](\alpha)\\
&=fe_1e_2(\alpha)+ge_1T(\alpha)+f\big(2T+\omega(e_1)e_1+\omega(e_2)e_2\big)(\alpha)
+g\big(\mbox{Im}A_{11}+\omega(T)\big)e_2(\alpha)
\end{split}
\end{equation}
where we have used (\ref{7.9}). Substituting (\ref{7.2}), 
(\ref{7.10})
and (\ref{7.11}) into (\ref{7.32}) yields
\begin{equation*} 
\begin{split}
\frac{d}{dt}e_1(\alpha)
&=fe_1e_2(\alpha)+ge_1V(\alpha)-ge_1(\alpha)e_2(\alpha)-g\alpha e_1e_2(\alpha)\\
&\hspace{4mm}+2fV(\alpha)-2\alpha fe_2(\alpha)+fHe_1(\alpha)
+f\big(h^{-1}e_1(h)+2\alpha\big)e_2(\alpha)\\
&\hspace{4mm}+g(\mbox{Im}A_{11})e_2(\alpha)+g\big(e_1(\alpha)-\alpha h^{-1}e_1(h)-\mbox{Im}A_{11}\big)e_2(\alpha)\\
&=fe_1e_2(\alpha)+ge_1 V(\alpha)-\alpha g e_1e_2(\alpha)\\
&\hspace{4mm}+2fV(\alpha)+fHe_1(\alpha)+fh^{-1}e_1(h)e_2(\alpha)-\alpha gh^{-1}e_1(h)e_2(\alpha)\\
&=he_1e_2(\alpha)+ge_1V(\alpha)+fHe_1(\alpha)+e_1(h)e_2(\alpha)+2fV(\alpha)\\
&=e_1V(h)+ge_1V(\alpha)+2fV(\alpha)+fHe_1(\alpha),
\end{split}
\end{equation*}
where we have used (\ref{7.12}) in the last equality. 
This proves (\ref{7.24})
and completes the proof. 
\end{proof}

\begin{proof}[Proof of Theorem \ref{thm7.1}]
From (\ref{7.0}), for $f$, $g$, and hence $h=f-\alpha g$ having compact support in nonsingular 
domain of $\Sigma$, we compute 
\begin{align*}
\begin{split}
&\frac{d}{dt}\int_{F_t(\Sigma)}dA_2\\
&=\frac{d}{dt}\int_\Sigma F_t^*\left(\left[\frac{2}{3}e_1(\alpha)+\frac{4}{3}\alpha^2-\frac{2}{3}\mbox{Im} A_{11}
+\frac{1}{6}W+\frac{2}{27}H^2\right]H\right)F_t^*(\theta\wedge e^1)\\
&=\frac{2}{3}\int_\Sigma \left[\left(e_1(\alpha)+2\alpha^2-\mbox{Im} A_{11}+\frac{1}{4}W+\frac{1}{3}H^2\right)\frac{dH}{dt}
+H\frac{d}{dt}\big(e_1(\alpha)\big)+4\alpha H\frac{d\alpha}{dt}\right]\theta\wedge e^1\\
&\hspace{4mm}+\int_\Sigma\left[\frac{2}{3}e_1(\alpha)+\frac{4}{3}\alpha^2-\frac{2}{3}\mbox{Im} A_{11}
+\frac{1}{6}W+\frac{2}{27}H^2\right]H\frac{d}{dt}\big[F_t^*(\theta\wedge e^1)\big]\\
&:=I+II,
\end{split}
\end{align*}
where 
\begin{align*}
\begin{split}
I&:=\frac{2}{3}\int_\Sigma \Bigg[\left(e_1(\alpha)+2\alpha^2-\mbox{Im} A_{11}+\frac{1}{4}W+\frac{1}{3}H^2\right)\frac{dH}{dt}
+H\frac{d}{dt}\big(e_1(\alpha)\big)+4\alpha H\frac{d\alpha}{dt}\Bigg]\theta\wedge e^1\\
&=\frac{2}{3}\int_\Sigma  \left(e_1(\alpha)+2\alpha^2-\mbox{Im} A_{11}+\frac{1}{4}W+\frac{1}{3}H^2\right)\\
&\hspace{4mm}\cdot \Bigg\{e_1e_1(h)+2\alpha e_1(h)+4h\left(e_1(\alpha)+\alpha^2+\frac{1}{4}H^2+\frac{1}{2}W-\frac{1}{2}\mbox{Im}A_{11}\right)
+g V(H) \Bigg\}\theta\wedge e^1\\
&\hspace{4mm}+\frac{2}{3}\int_\Sigma H\Big[e_1V(h)+ge_1V(\alpha)+2fV(\alpha)+fHe_1(\alpha)\Big]\theta\wedge e^1\\
&\hspace{4mm}+\frac{8}{3}\int_\Sigma \alpha H\big[V(h)+gV(\alpha)\Big]\theta\wedge e^1
\end{split}
\end{align*}
by (\ref{7.22})-(\ref{7.24}) and the assumption (\ref{condition}), and 
\begin{equation*}
\begin{split}
II&:=\int_\Sigma\left[\frac{2}{3}e_1(\alpha)+\frac{4}{3}\alpha^2-\frac{2}{3}\mbox{Im} A_{11}
+\frac{1}{6}W+\frac{2}{27}H^2\right]H\frac{d}{dt}\big[F_t^*(\theta\wedge e^1)\big]\\
&=\int_\Sigma\left[\frac{2}{3}He_1(\alpha)+\frac{4}{3}\alpha^2H-\frac{2}{3}H\mbox{Im} A_{11}
+\frac{1}{6}WH+\frac{2}{27}H^3\right]
\big(-fH+V(g)\big)\theta\wedge e^1
\end{split}
\end{equation*}
by (\ref{7.21}). 
Let 
\begin{align*}
H_1&=\int_\Sigma\frac{2}{3}\left(e_1(\alpha)+2\alpha^2-\mbox{Im} A_{11}+\frac{1}{4}W+\frac{1}{3}H^2\right)
\big[e_1e_1(h)+2\alpha e_1(h)\big]\theta\wedge e^1\\
H_2&=\int_\Sigma\frac{8}{3}\left(e_1(\alpha)+2\alpha^2-\mbox{Im} A_{11}+\frac{1}{4}W+\frac{1}{3}H^2\right)\\
&\hspace{8mm}\cdot
\left(e_1(\alpha)+\alpha^2+\frac{1}{4}H^2+\frac{1}{2}W-\frac{1}{2}\mbox{Im}A_{11}\right)h \theta\wedge e^1\\
H_3&=\int_\Sigma\left(\frac{2}{3}H e_1V(h)+\frac{8}{3}\alpha H V(h)\right)\theta\wedge e^1\\
F&=\int_\Sigma\left[\frac{4}{3}HV(\alpha)-H\left(\frac{4}{3}\alpha^2H+\frac{2}{27}H^3
+\frac{1}{6}WH-\frac{2}{3}\mbox{Im}A_{11}H\right)\right]f\theta\wedge e^1\\
G_1&=\int_\Sigma\frac{2}{3} \left(e_1(\alpha)+2\alpha^2-\mbox{Im} A_{11}+\frac{1}{4}W+\frac{1}{3}H^2\right)
V(H)g\theta\wedge e^1\\
&\hspace{4mm}+\int_\Sigma\left[\frac{2}{3}He_1V(\alpha)+\frac{8}{3}\alpha H V(\alpha)\right]g\theta\wedge \theta^1\\
G_2&=\int_\Sigma
\left[\frac{2}{3}He_1(\alpha)+\frac{4}{3}\alpha^2H-\frac{2}{3}H\mbox{Im} A_{11}
+\frac{1}{6}WH+\frac{2}{27}H^3\right]
 V(g) \theta\wedge e^1.
\end{align*}
Thus we have 
\begin{equation}\label{7.36}
\frac{d}{dt}\int_{F_t(\Sigma)}dA_2
=I+II=H_1+H_2+H_3+F+G_1+G_2.
\end{equation}
We compute 
\begin{align*}
H_1&=\int_\Sigma\frac{2}{3}\left(e_1(\alpha)+2\alpha^2-\mbox{Im} A_{11}+\frac{1}{4}W+\frac{1}{3}H^2\right)
\big[e_1e_1(h)+2\alpha e_1(h)\big]\theta\wedge e^1\\
&=-\frac{2}{3}\int_\Sigma e_1\left(e_1(\alpha)+2\alpha^2-\mbox{Im} A_{11}+\frac{1}{4}W+\frac{1}{3}H^2\right)
e_1(h)\theta\wedge e^1\\
&=-\frac{2}{3}\int_\Sigma\left(e_1e_1(\alpha)+4\alpha e_1(\alpha)+\frac{2}{3}He_1(H)\right)e_1(h)\theta\wedge e^1\\
&=-\frac{2}{3}\int_\Sigma\left(\frac{2}{3}He_1(H)
-2\alpha e_1(\alpha)-4\alpha^3-\alpha H^2
V(H)-2W\alpha+2\alpha \mbox{Im}A_{11}
\right)e_1(h)\theta\wedge e^1\\
&=\frac{2}{3}\int_\Sigma \Bigg\{e_1\left(\frac{2}{3}He_1(H)
-2\alpha e_1(\alpha)-4\alpha^3-\alpha H^2
V(H)-2W\alpha+2\alpha \mbox{Im}A_{11}
\right)\\
&\hspace{4mm}+2\alpha\left(\frac{2}{3}He_1(H)
-2\alpha e_1(\alpha)-4\alpha^3-\alpha H^2
V(H)-2W\alpha+2\alpha \mbox{Im}A_{11}
\right)\Bigg\}h\theta\wedge e^1\\
&=\frac{4}{9}
\int_\Sigma\Bigg[He_1e_1(H)+e_1(H)^2-3e_1(\alpha)^2-\alpha He_1(H)-\frac{3}{2}H^2e_1(\alpha)\\
&\hspace{4mm}-6\alpha^2e_1(\alpha)+\frac{3}{2}e_1V(H)-3We_1(\alpha)+3\mbox{Im}A_{11}e_1(\alpha)\Bigg]h\theta\wedge e^1,
\end{align*}
where we have used (\ref{7.1}), (\ref{7.17}) and Lemma \ref{lem7.4}. 
We also compute 
\begin{align*}
H_3&=\int_\Sigma\left(\frac{2}{3}H e_1V(h)+\frac{8}{3}\alpha H V(h)\right)\theta\wedge e^1\\
&=-\int_\Sigma\frac{2}{3}\big[e_1(H)+2\alpha H\big]V(h)\theta\wedge e^1
+\int_\Sigma\frac{8}{3}\alpha HV(h)\theta\wedge e^1\\
&=-\int_\Sigma\frac{2}{3}\big[e_1(H)-2\alpha H\big]V(h)\theta\wedge e^1\\
&=\int_\Sigma \frac{2}{3}\Big[V\big(e_1(H)-2\alpha H\big)-\alpha H\big(e_1(H)-2\alpha H\big)\Big]h\theta\wedge e^1\\
&=\int_\Sigma\left[\frac{2}{3}e_1V(h)-\frac{4}{3}HV(\alpha)+\frac{4}{3}\alpha^2H^2\right] h\theta\wedge e^1,
\end{align*}
where we have used 
\begin{align}\label{7.35}
\begin{split}
[e_1,V]&=[e_1,T+\alpha e_2]=[e_1,T]+e_1(\alpha)e_2+\alpha[e_1,e_2]\\
&=-(\mbox{Im}A_{11}+\omega(T))e_2+e_1(\alpha)e_2+\alpha\big[-2T-\omega(e_1)e_1-\omega(e_2)e_2\big]\\
&=\alpha h^{-1}e_1(h)e_2-\alpha He_1-\alpha h^{-1} e_1(h) e_2-2\alpha^2 e_2-2\alpha T\\
&=-\alpha He_1-2\alpha V
\end{split}
\end{align}
by (\ref{7.9}) and (\ref{7.11}).
Therefore, we obtain 
\begin{align}\label{7.37}
\begin{split}
&H_1+H_2+H_3\\
&=\frac{4}{9}\int_\Sigma\left[He_1e_1(H)+3e_1V(H)+e_1(H)^2+\frac{1}{2}H^4\right]h\theta\wedge e^1\\
&\hspace{4mm}+\frac{4}{3}\int_\Sigma\big[e_1(\alpha)^2+4\alpha^2e_1(\alpha)+4\alpha^4\big]h\theta\wedge e^1\\
&\hspace{4mm}-\frac{4}{9}\int_\Sigma\big[\alpha H e_1(H)+3HV(\alpha)-2H^2e_1(\alpha)-8\alpha^2H^2\big]h\theta\wedge e^1\\
&\hspace{4mm}+\frac{2}{3}\int_\Sigma W\left(e_1(\alpha)+\frac{11}{12}H^2+5\alpha^2+\frac{1}{2} W\right)h\theta\wedge e^1\\
&\hspace{4mm}+\frac{8}{3}\int_\Sigma 
\mbox{Im}A_{11}\left[\frac{1}{2}\mbox{Im}A_{11}-e_1(\alpha)-2\alpha^2-\frac{5}{8}W-\frac{5}{12}H^2\right]h\theta\wedge e^1.
\end{split}
\end{align}
By (\ref{7.1}), (\ref{7.35}) and Lemma \ref{lem7.4},
we compute 
\begin{align*}
G_2&=\int_\Sigma
\left[\frac{2}{3}He_1(\alpha)+\frac{4}{3}\alpha^2H-\frac{2}{3}H\mbox{Im} A_{11}
+\frac{1}{6}WH+\frac{2}{27}H^3\right]
 V(g) \theta\wedge e^1\\
 &=-\int_\Sigma
V\left[\frac{2}{3}He_1(\alpha)+\frac{4}{3}\alpha^2H-\frac{2}{3}H\mbox{Im} A_{11}
+\frac{1}{6}WH+\frac{2}{27}H^3\right]
g\theta\wedge e^1\\
&\hspace{4mm}+\int_\Sigma
\alpha H\left[\frac{2}{3}He_1(\alpha)+\frac{4}{3}\alpha^2H-\frac{2}{3}H\mbox{Im} A_{11}
+\frac{1}{6}WH+\frac{2}{27}H^3\right]
g\theta\wedge e^1\\
 &=-\int_\Sigma
\Bigg[\frac{2}{3}HVe_1(\alpha)+\frac{2}{3}e_1(\alpha)V(H)+\frac{4}{3}\alpha^2V(H)+\frac{8}{3}\alpha HV(\alpha)\\
&\hspace{8mm}-\frac{2}{3}V(H)\mbox{Im} A_{11}
+\frac{1}{6}WV(H)+\frac{2}{9}H^2V(H)\Bigg]
g\theta\wedge e^1\\
&\hspace{4mm}+\int_\Sigma
\alpha H\left[\frac{2}{3}He_1(\alpha)+\frac{4}{3}\alpha^2H-\frac{2}{3}H\mbox{Im} A_{11}
+\frac{1}{6}WH+\frac{2}{27}H^3\right]
g\theta\wedge e^1\\
&=\int_\Sigma\Bigg[-\frac{2}{3}He_1V(\alpha)-\frac{2}{3}e_1(\alpha)V(H)-\frac{4}{3}\alpha^2V(H)-4\alpha HV(\alpha)-\frac{2}{9}H^2V(H)\\
&\hspace{8mm}+\frac{2}{3}V(H)\mbox{Im} A_{11}-\frac{1}{6}WV(H)+\frac{4}{3}\alpha^3H^2+\frac{2}{27}\alpha H^4\\
&\hspace{8mm}-\frac{2}{3}\alpha H^2\mbox{Im}A_{11}+\frac{1}{6}W\alpha H^2\Bigg]
g\theta\wedge e^1.
\end{align*}
Therefore, we have 
\begin{align}\label{7.38}
\begin{split}
G_1+G_2&=\int_{\Sigma}\left(-\frac{4}{3}HV(\alpha)+\frac{4}{3}\alpha^2H^2+\frac{2}{27}H^4
+\frac{1}{6}WH^2-\frac{2}{3} \mbox{Im}A_{11}H^2 \right)\alpha g\theta\wedge e^1, \\
F+G_1+G_2&=\int_{\Sigma}\left(\frac{4}{3}HV(\alpha)-\frac{4}{3}\alpha^2H^2-\frac{2}{27}H^4
-\frac{1}{6}WH^2+\frac{2}{3} \mbox{Im}A_{11}H^2 \right)h\theta\wedge e^1.
\end{split}
\end{align}
Now (\ref{7.34}) 
follows from combining (\ref{7.36}), (\ref{7.37}) and (\ref{7.38}).
This proves Theorem \ref{thm7.1}. 
\end{proof}

\begin{proof}[Proof of Lemma \ref{lem7.5}]
It follows from  (\ref{7.1}), (\ref{7.10}) and (\ref{7.9})  that
\begin{align*}
[f_2+gT,V]
&=f[e_2,V]+g[T,V]-V(f)e_2-V(g)T\\
&=f\big(e_2(\alpha)e_2+[e_2,T]\big)
+g\big(T(\alpha)e_2+\alpha[T,e_2]\big)-V(f)e_2-V(g)T\\
&=f\big(e_2(\alpha)e_2+(-\mbox{Im}A_{11}+\omega(T))e_1\big)\\
&\hspace{4mm}
+g\big(T(\alpha)e_2-\alpha (-\mbox{Im}A_{11}+\omega(T))e_1\big)-V(f)e_2-V(g)T\\
&=f\Big(e_2(\alpha)e_2+\big(-2\mbox{Im}A_{11}+e_1(\alpha)-\alpha h^{-1}e_1(h)\big)e_1\Big)\\
&\hspace{4mm}
+g\Big(T(\alpha)e_2-\alpha\big(-2\mbox{Im}A_{11}+e_1(\alpha)-\alpha h^{-1}e_1(h)\big)e_1\Big)-V(f)e_2-V(g)T\\
&=h e_1(\alpha)e_1-\alpha e_1(h)e_1-2h\mbox{Im}A_{11}e_1
-V(g)V.
\end{align*}  
Hence, we have 
\begin{align*}
\frac{d}{ds}V(\alpha)
&=V\left(\frac{d\alpha}{ds}\right)+[f_2+gT,V](\alpha)\\
&=V\left(\frac{d\alpha}{ds}\right)+h e_1(\alpha)e_1(\alpha)-\alpha e_1(h)e_1(\alpha)-2h\mbox{Im}A_{11}e_1(\alpha)
-V(g)V(\alpha).
\end{align*}
Similarly, we have
\begin{align*}
&\frac{d}{ds}V(H_{cr})=V\left(\frac{d}{ds}H_{cr}\right)+[f_2+gT,V](H_{cr})\\
&=V\left(\frac{d}{ds}H_{cr}\right)+
h e_1(\alpha)e_1(H_{cr})-\alpha e_1(h)e_1(H_{cr})-2h\mbox{Im}A_{11}e_1(H_{cr})
-V(g)V(H_{cr}).
\end{align*}

It follows from (\ref{7.2}), (\ref{7.10}), (\ref{7.11}), and 
(\ref{7.9}) 
that 
\begin{align*}
&[fe_2+gT,e_1]
=f[e_2,e_1]-e_1(f) e_2+g[T,e_1]-e_1(g)T\\
&=f\big(2T+\omega(e_1)e_1+\omega(e_2)e_2\big)-e_1(f) e_2
+g\big(\mbox{Im}A_{11}+\omega(T)\big)e_2-e_1(g)T\\
&=f\Big(2T+He_1+(h^{-1}e_1(h)+2\alpha)e_2\Big)-e_1(f) e_2\\
&\hspace{4mm}+g\Big(e_1(\alpha)-\alpha h^{-1}e_1(h)\Big)e_2-e_1(g)T\\
&=fHe_1 +2fV-e_1(g)V.
\end{align*}
This together with (\ref{7.22}) implies that 
\begin{align*}
&\frac{d}{ds}e_1(H)
=e_1\left(\frac{dH}{ds}\right)+[fe_2+gT,e_1](H)\\
&=e_1\Bigg(e_1e_1(h)+2\alpha e_1(h)+4h\left(e_1(\alpha)+\alpha^2+\frac{1}{4}H^2+\frac{1}{2}W-\frac{1}{2}\mbox{Im}A_{11}\right)\\
&\hspace{8mm}
+g\big(V(H)-2\alpha\mbox{Im}A_{11}-h^{-1}e_1(h)\mbox{Im}A_{11}+\mbox{Im}A_{11}\big)\Bigg)\\
&\hspace{4mm}+fHe_1(H) +2fV(H)-e_1(g)V(H)\\
&=e_1\Bigg(e_1e_1(h)+2\alpha e_1(h)+4he_1(\alpha)+H^2h+4\alpha^2h+2Wh-2h\mbox{Im}A_{11}\\
&\hspace{4mm}
-g\big(
2\alpha\mbox{Im}A_{11}+h^{-1}e_1(h)\mbox{Im}A_{11}-\mbox{Im}A_{11}\big)\Bigg)
+ge_1V(H)+fHe_1(H) +2fV(H).
\end{align*}
Similarly, we have 
\begin{align*}
\frac{d}{ds}\big(e_1(|H_{cr}|\mathfrak{f})\big)
&=e_1\left(\frac{d}{ds}(|H_{cr}|\mathfrak{f})\right)+
\big(fHe_1 +2fV-e_1(g)V\big)(|H_{cr}|\mathfrak{f}).
\end{align*}
Recall from (\ref{8.3})
that 
\begin{align*}
|H_{cr}|\mathfrak{f}
&=e_1(H) H_{cr}+\frac{3}{2}V(H_{cr})+\frac{1}{2}H e_1(H_{cr})
-\alpha H H_{cr}. 
\end{align*}
Therefore, we have 
\begin{align*}
\frac{d}{ds}(|H_{cr}|\mathfrak{f})
&= H_{cr}\frac{d}{ds}\big(e_1(H)\big)
+e_1(H)\frac{d}{ds}H_{cr}+\frac{3}{2}\frac{d}{ds}V(H_{cr})\\
&\hspace{4mm}+\frac{1}{2}e_1(H_{cr})\frac{dH}{ds}+\frac{1}{2}H\frac{d}{ds}\big(e_1(H_{cr})\big)\\
&\hspace{4mm}-\frac{d\alpha}{ds} H H_{cr}-\alpha\frac{dH}{ds}H_{cr}
-\alpha H\frac{d}{ds}H_{cr}. 
\end{align*}
Recall from (\ref{5.2}) that 
$$H_{cr}=e_1(\alpha)+\frac{1}{2}\alpha^2-\mbox{Im} A_{11}+\frac{1}{4}W+\frac{1}{6}H^2.$$
Differentiating it with respect to $t$ and using 
(\ref{7.1}) and (\ref{7.22})-(\ref{7.24}), we find 
\begin{align*}
&\frac{d}{ds}H_{cr}=\frac{d}{ds}e_1(\alpha)+\alpha\frac{d\alpha}{ds}+\frac{1}{3}H\frac{dH}{ds}\\
&=e_1V(h)+\alpha V(h)+\frac{1}{3}H\big(e_1e_1(h)+2\alpha e_1(h)\big)\\
&\hspace{4mm}+\frac{4}{3}H\left(e_1(\alpha)+\alpha^2+\frac{1}{4}H^2+\frac{1}{2}W-\frac{1}{2}\mbox{Im}A_{11}\right)h
+\big(2V(\alpha)+He_1(\alpha)\big)f\\
&\hspace{4mm}+\left(e_1V(\alpha)+\alpha V(\alpha)
+\frac{1}{3}H\big(V(H)-2\alpha\mbox{Im}A_{11}-h^{-1}e_1(h)\mbox{Im}A_{11}+\mbox{Im}A_{11}\big)\right)g.
\end{align*}
This proves the assertion. 
\end{proof}

\section*{Acknowledgement}

The author would like to thank Prof. Jih-Hsin Cheng
for several helpful discussions which lead to results 
in this paper, and would like to thank the anonymous referee
for his/her helpful comments and suggestions that improved the
quality of this paper. 
The author was supported  by the National Science and Technology Council (NSTC),
Taiwan, with grant Number: 114-2115-M-032 -003 -MY2

\bibliographystyle{amsplain}

\begin{thebibliography}{30}

\bibitem{Balogh}
Z. M. Balogh,  
Size of characteristic sets and functions with prescribed gradient.  
\textit{J. Reine Angew. Math.} \textbf{564} (2003), 63–83. 


\bibitem{CHP}
S. Chanillo, H.-L. Chiu, and P. Yang,  Embeddability for 3-dimensional Cauchy-Riemann manifolds and CR Yamabe invariants. \textit{Duke Math. J.} \textbf{161} (2012), no. 15, 2909–2921. 



\bibitem{C}
J. H. Cheng, 
Some applications of Cartan's theory on three-dimensional Cauchy-Riemann geometry. \textit{Math. Z.} \textbf{218} (1995), no. 4, 527–548. 


\bibitem{CCYZ}
J. H. Cheng, H.-L. Chiu, P. Yang, and Y. Zhang,
On minimizing surfaces of the CR invariant energy $E_1$. (2025), preprint. https://arxiv.org/abs/2509.25780. 

\bibitem{CHMY}
J. H.  Cheng, J.-F. Hwang, A. Malchiodi, and P. Yang,  
 Minimal surfaces in pseudohermitian geometry. \textit{Ann. Sc. Norm. Super. Pisa Cl. Sci. (5)} \textbf{4} (2005), no. 1, 129–177. 

\bibitem{CHY}
J.-H. Cheng, J.-F. Hwang, and P. Yang, Existence and uniqueness for $p$-area minimizers in the Heisen-berg group. \textit{Math. Ann.} \textbf{337} (2007) 253–293.

\bibitem{CYZ}
J. H. Cheng, P. Yang, and Y. Zhang,  Invariant surface area functionals and singular Yamabe problem in 3-dimensional CR geometry. 
\textit{Adv. Math.} \textbf{335} (2018), 405–465. 
 
\bibitem{CH}
H.-L. Chiu and P. T. Ho, 
Global differential geometry of curves in three-dimensional Heisenberg group and CR sphere. 
\textit{J. Geom. Anal.} \textbf{29} (2019), no. 4, 3438–3469. 


\bibitem{DS}
G. M.  Dall'Ara and D. N. Son,   An upper bound for the first positive eigenvalue of the Kohn Laplacian on Reinhardt real hypersurfaces. \textit{Proc. Amer. Math. Soc.} \textbf{151} (2023), no. 1, 123–133.
 
\bibitem{HH}
P. T. Ho and Y. C. Huang,  
The differential geometry of curves in $B^1\times \mathbb{R}$. 
\textit{J. Geom. Anal.} \textbf{34} (2024), no. 2, Paper No. 43, 32 pp. 

\bibitem{K} W. T. Kao, 
	The CR Yamabe Problem: Regularity of the CR Singular Yamabe Equation, CR Invariants of Hypersurfaces, and the p-Mass.
Doctoral dissertation, National Taiwan University, 2025. 
http://tdr.lib.ntu.edu.tw/jspui/handle/123456789/98429

\bibitem{Lee}
J. M. Lee,  
The Fefferman metric and pseudo-Hermitian invariants.
\textit{Trans. Amer. Math. Soc.} \textbf{296} (1986), no. 1, 411–429. 

\bibitem{Sunada}
T. Sunada,  
Holomorphic equivalence problem for bounded Reinhardt domains.
\textit{Math. Ann.} \textbf{235} (1978), no. 2, 111–128. 

\bibitem{T}
Y. Takeuchi, CR Paneitz operator on non-embeddable CR manifolds. (2024), preprint.
https://arxiv.org/abs/2407.16185 

\end{thebibliography}

\end{document}